\documentclass[12pt]{amsart}

\usepackage{amsmath}
\usepackage{amssymb}
\usepackage{amsthm}
\usepackage[noadjust]{cite}
\usepackage{mathrsfs}
\usepackage{dsfont}
\usepackage{dcpic}
\usepackage{hyperref}
\usepackage[backgroundcolor=green!40, linecolor=green!40]{todonotes}
\usepackage[all,cmtip]{xy}
\usepackage{xcolor}
\usepackage{yfonts}
\usepackage{enumerate}
\usepackage{bbm}
\usepackage{dsfont}
\usepackage{verbatim}
\usepackage{setspace}
\usepackage{mathtools}
%\doublespacing

\usepackage[top=1in, bottom=1.25in, left=1.25in, right=1.25in]{geometry}

\hypersetup{colorlinks=true,linkcolor=red,citecolor=blue}

\newtheorem{lemma}{Lemma}[section]
\newtheorem{proposition}[lemma]{Proposition}
\newtheorem{theorem}[lemma]{Theorem}

\newtheorem{conjecture}[lemma]{Conjecture}

\newtheorem*{theorem2}{Theorem}

\theoremstyle{definition}
\newtheorem{definition}[lemma]{Definition}

\newtheorem{hypothesis}[lemma]{Hypothesis}
\newtheorem{remarks}[lemma]{Remarks}

%% File of macros

%% mathbb alphabet
%%%%%%%%%%%%%%%%%%%%%%%%%%%%%%%%%%%%%%%%%%%%%%%%%%%%%%%%%%%%%%%%%%%%%%%%%%%

\def\C{{\mathbb C}}

\def\R{{\mathbb R}}

\def\Z{{\mathbb Z}}

%% mathcal alphabet
%%%%%%%%%%%%%%%%%%%%%%%%%%%%%%%%%%%%%%%%%%%%%%%%%%%%%%%%%%%%%%%%%%%%%%%%%%%

\def\Aa{\mathcal{A}}

\def\Dd{\mathcal{D}}

\def\Ii{\mathcal{I}}

\def\Ll{\mathcal{L}}

\def\Vv{\mathcal{V}}

%% mathfrak alphabet
%%%%%%%%%%%%%%%%%%%%%%%%%%%%%%%%%%%%%%%%%%%%%%%%%%%%%%%%%%%%%%%%%%%%%%%%%%%

\def\BB{\mathfrak{B}}

\def\SS{\mathfrak{S}}

%% mathbf alphabet
%%%%%%%%%%%%%%%%%%%%%%%%%%%%%%%%%%%%%%%%%%%%%%%%%%%%%%%%%%%%%%%%%%%%%%%%%%%

\def\GGG{\mathbf{G}}
\def\HHH{\mathbf{H}}

\def\LLL{\mathbf{L}}
\def\MMM{\mathbf{M}}

\def\PPP{\mathbf{P}}

\def\SSS{\mathbf{S}}
\def\TTT{\mathbf{T}}
\def\UUU{\mathbf{U}}

\def\XXX{\mathbf{X}}

\def\ZZZ{\mathbf{Z}}
\def\kkk{\mathbf{k}}

\def\Hom{\operatorname{Hom}}

\def\Aut{\operatorname{Aut}}
\def\Ind{\operatorname{Ind}}
\def\cInd{\mathrm{c}\text{-}\operatorname{Ind}}
\def\Res{\operatorname{Res}}
\def\Gal{\operatorname{Gal}}
\def\Irr{\mathrm{Irr}}

\def\GL2{\mathbf{GL}_2}
\def\SL2{\mathbf{SL}_2}
\def\GLN{\mathbf{GL}_N}

\def\SLN{\mathbf{SL}_N}

\def\Rep{\mathrm{Rep}}
\def\rec{\mathrm{rec}}

\newcommand{\dedekind}{{\scriptstyle\mathcal{O}}}

\def\iner{\mathrm{iner}}

\def\dep0{\mathrm{dep\ 0}}

\def\op{\mathrm{op}}
\def\tame{\mathrm{tame}}

\def\ur{\mathrm{ur}}

\def\aff{\mathrm{aff}}

\def\reg{\mathrm{reg}}
\def\inerP{\mathrm{iner}_\Dd}
\def\Frob{\mathrm{Frob}}
\def\ad{\mathrm{ad}}

\makeatletter
\renewcommand\subsection{\@startsection {subsection}{1}{\z@}%
                                        {-3.5ex \@plus -1ex \@minus -.2ex}%
                                        {2.3ex \@plus.2ex}%
                                        {\normalfont\textit}}
\makeatother
\setcounter{tocdepth}{1}

\begin{document}
\newtheorem*{recall}{Recall}
\newtheorem*{notat}{Notations}
\newtheorem*{problem}{Problem}
\newtheorem*{fact}{Fact}

\title{The unicity of types for depth-zero supercuspidal representations}
\author{Peter Latham}
\address{Peter Latham\newline Department of Mathematics\newline University of East Anglia\newline Norwich\newline United Kingdom}
\email{peter.latham@uea.ac.uk}
\setlength{\parindent}{0pt}

\begin{abstract}
\noindent We establish the unicity of types for depth-zero supercuspidal representations of an arbitrary $p$-adic group $G$, showing that each depth-zero supercuspidal representation of $G$ contains a unique conjugacy class of typical representations of maximal compact subgroups of $G$. As a corollary, we obtain an inertial Langlands correspondence for these representations via the Langlands correspondence of DeBacker and Reeder.
\end{abstract}
\keywords{Theory of types, $p$-adic groups, depth-zero supercuspidals, Langlands correspondence}
\subjclass{22E50}
\maketitle

\section{Introduction}

Let $G=\GGG(F)$ be the locally profinite group of rational points of some connected reductive algebraic group $\GGG$ defined over a non-archimedean local field $F$ (for brevity, we call such groups $p$-\emph{adic groups}). The group $G$ naturally acts on a certain Euclidean polysimplicial complex, the \emph{(enlarged) building} of $G$, and the isotropy subgroups of the simplices under this action form important compact open subgroups of $G$. These groups naturally occur as the groups of integral points of certain group subschemes of $\GGG$ which, in general, will not be connected; the groups of integral points of their connected components are known as \emph{parahoric subgroups}. Recall that a (smooth) irreducible complex representation $\pi$ of $G$ is said to be of \emph{depth zero} if it contains a non-zero vector which is fixed by the maximal pro-$p$ normal subgroup of some parahoric subgroup of $G$ (its \emph{pro-unipotent radical}).\\

In \cite{latham2015sl2} and \cite{latham2015sln}, the author described a conjecture on the generalization of the \emph{unicity of types}, previously considered by Henniart and Pa\v{s}k\={u}nas in the case of general linear groups \cite{breuil2002multiplicites,paskunas2005unicity}, to the supercuspidal representations of arbitrary $p$-adic groups. Let us briefly recall how this generalization should look. Given a supercuspidal representation $\pi$ of $G$, a \emph{type} $(J,\lambda)$ for $\pi$ is an irreducible representation $\lambda$ of a compact open subgroup $J$ of $G$ such that an irreducible representation $\pi'$ of $G$ has a non-zero $\lambda$-isotypic component if and only if $\pi'$ is isomorphic to an unramified twist of $\pi$. In the aforementioned papers, we introduced the notion of an \emph{archetype} for $\pi$: these are $G$-conjugacy classes of types for $\pi$ which are defined on maximal compact subgroups of $G$. It is conjectured that each supercuspidal representation $\pi$ of a $p$-adic group $G$ should admit at most one archetype on each conjugacy class of maximal compact subgroups of $G$. The goal of this paper is to establish this result in the case that the representation $\pi$ is of depth-zero.\\

Moy---Prasad and Morris have independently constructed pairs $(G_x,\sigma)$ contained in each depth-zero supercuspidal representation $\pi$ of $G$ \cite{moy1994unrefined,moy1996unrefined,morris1999levelzero}. These pairs are types in a slightly more general sense than that described above: they are types for a finite number of depth-zero supercuspidals, and through a ``refinement'' process, produce types in the above sense. We refer to these types as \emph{unrefined depth-zero types}, and to their refinements as \emph{refined depth-zero types}. Our main technical result is to show that any type which behaves like an unrefined depth-zero type must indeed be isomorphic to one:

\begin{theorem2}
Let $\pi$ be a depth-zero supercuspidal representation of $G$, and let $(G_x,\sigma)$ be an unrefined depth-zero type contained in $\pi$. Let $G_y$ be a maximal parahoric subgroup of $G$, and $\sigma'$ an irreducible representation of $G_y$. Suppose that $(G_y,\sigma')$ is typical for the finite set of supercuspidal representations associated to $\sigma$. Then $(G_y,\sigma')$ is conjugate to $(G_x,\sigma)$.
\end{theorem2}

The point of this result is that it establishes ``unicity on the level of parahoric subgroups''. The proof of this result is completely explicit --- we examine the irreducible subrepresentations of $\pi\downharpoonright_{G_y}$, which fall into two families: the unrefined depth-zero types (which occur when $G_x=G_y$), and the other representations, which we expect to be atypical. For each of the representations $\tau$ in this latter family, we construct a finite set of unrefined depth-zero types defined on a maximal parahoric subgroup of a \emph{proper} Levi subgroup of $G$. Morris has explicitly constructed covers for such types (in the sense of \cite{bushnell1998structure}); we choose an appropriate cover for each of these types, which gives us a finite set of representations of non-maximal parahoric subgroups of $G$, and show that any such representation $\tau$ must intertwine with one of these covers, and hence cannot be typical.\\

While this is a strong result in the direction of unicity, a bit more work is required in order to deduce our conjecture on the unicity of types. If $\GGG$ is semisimple and simply connected as an algebraic group, then the maximal parahoric subgroups of $G$ coincide with the maximal compact subgroups of $G$, in which case the above result reduces precisely to our desired unicity of types result. However, for arbitrary $p$-adic groups there are maximal parahoric subgroups which are not maximal as compact subgroups. The maximal compact subgroups of $G$ all normalize some parahoric subgroup of $G$, although they may not normalize a \emph{maximal} parahoric subgroup. If a maximal compact subgroup $K$ of $G$ does normalize some maximal parahoric subgroup, then $K$ is the maximal compact subgroup of the $G$-normalizer of $K$. We first need to show that one cannot have a type for $\pi$ defined on a maximal compact subgroup of $G$ which does \emph{not} normalize a maximal parahoric subgroup of $G$. Having done so, we assume that the maximal compact subgroup $K$ contains a maximal parahoric $G_x$, and, given a type $(K,\tau)$ for $\pi$, show that the restriction to $G_x$ of $\tau$ is isomorphic to a sum of unrefined depth-zero types. \\

These results are heavily dependent on the explicit nature of our result on the typical representations of parahoric subgroups: they require us to make use of our explicit identification of non-cuspidal representations in which the atypical summands of $\pi\downharpoonright_{G_y}$ are contained. Combining these two results with our unicity result on the level of parahoric subgroups, it is a simple matter to deduce our main theorem:

\begin{theorem2}[The unicity of types for depth-zero supercuspidals]
Let $\pi$ be a depth-zero supercuspidal representation of $G$. The refined depth-zero type associated to $\pi$ by Morris is the unique archetype for $\pi$.
\end{theorem2}

In the previously known cases (of $\GLN(F)$ and $\SLN(F)$ in \cite{breuil2002multiplicites,paskunas2005unicity,latham2015sl2,latham2015sln}), such unicity results have led to instances of the inertial Langlands correspondence. Here, we are also able to obtain an inertial Langlands correspondence, under the assumption that the correspondence constructed by DeBacker and Reeder in \cite{debacker2009depthzero} is the unique correspondence satisfying all of the expected properties of the local Langlands correspondence. In section \ref{DeBacker--Reeder-section}, we give a short summary of this construction. We emphasize that our approach relies heavily on the explicit nature of the DeBacker--Reeder construction. In particular, this means that we are only able to describe the inertial Langlands correspondence for those Langlands parameters which DeBacker and Reeder call \emph{tame regular semisimple elliptic Langlands parameters}.

\begin{theorem2}[The tame inertial Langlands correspondence for supercuspidals]
There exists a unique surjective finite-to-one map from the set of archetypes contained in regular depth-zero supercuspidal representations of $G$ to the set of restrictions to inertia of tame regular Langlands parameters for $G$, which is compatible with the depth-zero Langlands correspondence of DeBacker and Reeder. The fibres of this map admit an explicit description.
\end{theorem2}

The manner in which the fibres of this map are uniformly related to the $L$-packets requires a rather technical setup incorporating all of the main results of this paper; the key observation is that two Langlands parameters have isomorphic restrictions to inertia if and only if the representations in the corresponding $L$-packets contain the same unrefined depth-zero types. We do not elaborate further here; see Theorem \ref{tame-inertial-correspondence}.

\subsection{Acknowledgements}

The results of this paper form part of my UEA PhD thesis; I would like to thank Shaun Stevens for his excellent supervision over the last three years, and EPSRC for funding my studentship. I am also grateful to Jeffrey Adler, Joshua Lansky and Maarten Solleveld for their interest in this work, and for helpful conversations and suggestions regarding previous versions of the paper. Finally, I thank the anonymous referee for many helpful comments.

\section{Notation}

We let $F$ denote a non-archimedean local field, i.e. a locally compact field, complete with respect to a non-archimedean discrete valuation $v_F$ and with finite residue field. Let $\dedekind$ denote the ring of integers of $F$ and $\frak{p}$ the maximal ideal; denote by $\kkk=\dedekind/\frak{p}$ the residue field.\\

By a $p$-\emph{adic group}, we mean the group of rational points of a connected reductive algebraic group defined over $F$, equipped with its natural locally profinite topology. Given a closed subgroup $H$ of a $p$-adic group $G=\GGG(F)$, let $\Res_H^G$ denote the restriction functor from the category of representations of $G$ to the category of representations of $H$. Let $\Ind_H^G$ and $\cInd_H^G$ denote the adjoint functors of induction and compact induction, respectively.\\

We will be concerned with the representation theory of $G$ and its closed subgroups. In this paper, representations of such groups will all be smooth and complex: by a representation of a closed subgroup $H$ of $G$, we mean a complex vector space $V$ and a homomorphism $H\rightarrow\Aut_\C(V)$ such that the stabilizer of each point is an open subgroup of $H$. The collection of all such representations, together with $H$-equivariant morphisms, then forms an abelian category, which we denote by $\Rep(H)$. We write $\Irr(H)$ for the set of isomorphism classes of irreducible objects in $\Rep(H)$; and will occasionally abuse notation by identifying a class in $\Irr(H)$ with a choice of representative.\\

We will denote the left-action of conjugacy by an element $g\in G$ by $^g\cdot$; so if $H$ is a subgroup of $G$ then $^gH=gHg^{-1}$ and, given a representation $\sigma$ of $H$, we write $^g\sigma$ for the representation $^g\sigma(gxg^{-1})=\sigma(x)$ of $^gH$.

\section{Some Bruhat---Tits theory}

We begin by recalling some of the basic constructions from Bruhat---Tits theory which we will need in later sections. Unless specified otherwise, we will follow the constructions from \cite[Section 1]{schneider1997sheaves}.

\subsection{Affine apartments}

We begin by fixing some notation, which will remain in place for the rest of the paper. Let $\GGG$ be a connected reductive group defined over $F$ with Lie algebra $\frak{g}$, and denote by $G=\GGG(F)$ the $F$-rational points of $\GGG$, taken with its natural locally profinite topology. Let $\SSS$ be a maximal $F$-split torus in $\GGG$, which will contain the split component $\ZZZ_\GGG^0$ of the centre of $\GGG$. Let $\TTT$ denote the centralizer in $\GGG$ of $\SSS$; then $\TTT$ is a minimal $F$-Levi subgroup of $\GGG$. Let $\Phi\subset\XXX^*(\SSS)$ be the complete root system for $\GGG$ relative to $\SSS$, and let $\Phi^\aff$ denote the corresponding set of affine roots: the elements of $\Phi^\aff$ are then of the form $\psi=\phi+n$, for some $n\in\Z$, and there is a gradient function $\psi\mapsto\dot{\psi}$ from $\Phi^\aff$ to $\Phi$, given by $\phi+n\mapsto\phi$. Let $W_\GGG$ be the Weyl group of $\GGG$, i.e. $W_\GGG=N_G(\SSS(F))/\TTT(F)$, and let $W_\GGG^\aff=N_G(\SSS(F))/\TTT(\dedekind)$ denote the affine Weyl group. Let $\check{\Phi}\subset\XXX_*(\SSS)$ be the set of coroots dual to $\Phi$, and denote by $\phi\mapsto\check{\phi}$ the natural duality between $\Phi$ and $\check{\Phi}$.\\

Since $\ZZZ_\GGG^0$ is a subtorus of $\SSS$, its cocharacter lattice identifies with a sublattice of the cocharacter lattice of $\SSS$.

\begin{definition}
The \emph{(affine) apartment} associated to the maximal $F$-split torus $\SSS$ of $\GGG$ is the Euclidean space $\mathscr{A}(G,\SSS)=(\XXX_*(\SSS)/\XXX_*(\ZZZ_\GGG^0))\otimes_\Z\R$.
\end{definition}

Note that $\mathscr{A}(G,\SSS)$ naturally inherits the structure of a polysimplicial complex from the underlying quotient lattice.

\subsection{Root subgroups}

To each root $\phi\in\Phi$, we may associate a root subgroup: we let $\UUU_\phi$ denote the unique smooth connected $F$-group subscheme of $\GGG$ such that $\UUU_\phi$ is normalized by $\SSS$ and such that $\mathrm{Lie}(\UUU_\phi)$, together with its natural $\SSS$-action, identifies with the weight space $\frak{g}_\phi+\frak{g}_{2\phi}\subset\frak{g}$. We then write $U_\phi=\UUU_\phi(F)$.\\

The apartment $\mathscr{A}(G,\SSS)$ admits a natural action of $N_G(\SSS(F))$. Indeed, let $v:\ZZZ_\GGG(\SSS)(F)\rightarrow\XXX_*(\SSS)\otimes_\Z\R$ denote the function defined by $\chi(v(z))=-v_F(\chi(z))$ for $z\in\ZZZ_G(F)$ and $\chi\in\XXX^*(\ZZZ_G(\SSS))$; this function extends to a map $N_\GGG(\SSS)(F)\rightarrow\mathrm{Aut}(\mathscr{A}(G,\SSS))$.\\

To each point $x\in\mathscr{A}(G,\SSS)$, we may associate a subgroup of $U_\phi$. A root $\phi\in\Phi$ naturally induces a linear form $\mathscr{A}(G,\SSS)\rightarrow\R$, as well as an involution $s_\phi\in W_\GGG$, which acts on $\mathscr{A}(G,\SSS)$ as $s_\phi\cdot x=x-\phi(x)\check{\phi}$. For each $u\in U_\phi\backslash\{1\}$, the set $U_{-\phi}uU_\phi\cap N_G(\SSS(F))$ then contains a single element $m(u)$ whose image in $W_\GGG$ is $s_\phi$. There exists a real number $l(u)$ such that, for all $x\in\mathscr{A}(G,\SSS)$, one has $m(u)x=s_\phi\cdot x-l(u)\check{\phi}$. This defines a discrete filtration of $U_\phi$ by $U_{\phi,r}=\{u\in U_\phi(F)\backslash\{1\}\ | \ l(u)\geq r\}\cup\{1\}$, where it is to be understood that $U_{\phi,\infty}$ is trivial. Each $x\in\mathscr{A}(G,\SSS)$ then induces a function $f_x:\Phi\rightarrow\R\cup\{\infty\}$ by $\phi\mapsto-\phi(x)$; we let $U_x$ denote the subgroup of $G$ generated by the $U_{\phi,f_x(\phi)}$, as $\phi$ runs through $\Phi$.\\

More generally, to each \emph{affine} root we may associate a similar root subgroup: for $\psi\in\Phi^\aff$, let $U_\psi=\{u\in\UUU_{\dot{\psi}}(F)\ | \ u=1\text{ or } \alpha_{\dot{\psi}}(u)\geq\psi\}$. Here $\alpha_{\dot{\psi}}$ is the unique affine function on $\mathscr{A}(G,\TTT)$ with gradient $\dot{\psi}$ which vanishes on the hyperplane of points fixed by $v(m(u))$.

\subsection{The affine and enlarged buildings}

We now impose the temporary assumption that $\GGG$ is $F$-quasi-split. We define an equivalence relation $\sim$ on $G\times\mathscr{A}(G,\SSS)$ by saying that $(g,x)\sim(h,y)$ if and only if there exists an $n\in N_G(\SSS(F))$ such that $nx=y$ and $g^{-1}hn\in U_x$. Let $\mathscr{B}(G)$ denote the quotient $(G\times\mathscr{A}(G,\SSS))/\sim$.\\

As $\GGG$ is quasi-split over some finite unramified extension, we define the building in general by Galois descent. Let $E/F$ be a finite unramified extension over which $\GGG$ becomes quasi-split. Then the natural action of $\Gal(E/F)$ on $\GGG(E)$ extends to an action on $\mathscr{B}(\GGG(E))$.

\begin{definition}
Let $E/F$ be a finite unramified extension over which $\GGG$ quasi-splits. The \emph{(affine) Bruhat---Tits building} of the $p$-adic group $G=\GGG(F)$ is the space $\mathscr{B}(G)=\mathscr{B}(\GGG(E))^{\Gal(E/F)}$ of $\Gal(E/F)$-fixed points in $\mathscr{B}(\GGG(E))$. 
\end{definition}

This space then carries a natural action of $G$ via left translation.

\begin{theorem}[\cite{bruhat1972buildings,bruhat1984buildings}]
The space $\mathscr{B}(G)$ is a contractible, finite-dimensional, locally finite Euclidean polysimplicial complex on which $G$ acts properly by simplicial automorphisms.
\end{theorem}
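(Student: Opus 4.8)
The plan is to deduce all four properties from the standard structure of $\mathscr{B}(G)$ as an affine building, so the argument splits into the $F$-quasi-split case---where $\mathscr{B}(G)$ is constructed explicitly as the quotient $(G\times\mathscr{A}(G,\SSS))/\sim$---and the general case, handled by Galois descent. In the quasi-split case the first task is to verify that the triple $(B,N,W_\GGG^{\aff})$, with $B$ an Iwahori subgroup (the pointwise stabilizer of a chamber of $\mathscr{A}(G,\SSS)$), $N=N_G(\SSS(F))$, and $W_\GGG^{\aff}$ the affine Weyl group, forms an affine Tits system; equivalently, one checks that the filtered root subgroups $(U_\psi)_{\psi\in\Phi^{\aff}}$ define a valuation of the root datum of $(\GGG,\SSS)$. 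Granting this, the quotient $(G\times\mathscr{A}(G,\SSS))/\sim$ is identified with the coset geometry of the Tits system, the translates $g\cdot\mathscr{A}(G,\SSS)$ form a system of apartments, and the building axioms hold: any two simplices lie in a common apartment (via the Bruhat decomposition of $G$ with respect to $B$), and two apartments sharing a chamber are carried onto one another by an element of $G$ fixing their intersection pointwise. The polysimplicial structure on $\mathscr{A}(G,\SSS)$ cut out by the affine root hyperplanes is preserved by $W_\GGG^{\aff}$ and hence descends to a well-defined polysimplicial structure on $\mathscr{B}(G)$.

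Given this structure, finite-dimensionality is immediate: by the first building axiom every simplex of $\mathscr{B}(G)$ lies in some apartment, so $\dim\mathscr{B}(G)=\dim\mathscr{A}(G,\SSS)=\dim_\R\big((\XXX_*(\SSS)/\XXX_*(\ZZZ_\GGG^0))\otimes_\Z\R\big)<\infty$. Local finiteness is where the finiteness of the residue field $\kkk$ enters: the link of a point $x\in\mathscr{B}(G)$ is a spherical building for the reductive quotient $\mathsf{G}_x$ of the parahoric subgroup at $x$, and this has only finitely many chambers because $\mathsf{G}_x(\kkk)$ is a finite group; thus a sufficiently small ball around $x$ meets only finitely many simplices. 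For contractibility I would equip each apartment with its Euclidean metric and check, using the building axioms, that these glue to a complete metric on $\mathscr{B}(G)$ satisfying the $\mathrm{CAT}(0)$ (nonpositive curvature) inequality; a complete $\mathrm{CAT}(0)$ space is uniquely geodesic with geodesics depending continuously on their endpoints, so contracting along the geodesics to a fixed basepoint gives an explicit deformation retraction onto a point. (Alternatively, one iterates the combinatorial retractions $\rho_{\mathscr{A},C}$ onto a fixed apartment to build a deformation retraction skeleton by skeleton.)

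For the $G$-action, note that $g\cdot[h,x]=[gh,x]$ is well defined on the quotient, and since $N_G(\SSS(F))$ acts on $\mathscr{A}(G,\SSS)$ through affine transformations permuting the affine root hyperplanes, every $g\in G$ acts by a polysimplicial automorphism. For properness it suffices to show that for each compact $K\subset\mathscr{B}(G)$ the set $\{g\in G:gK\cap K\neq\emptyset\}$ is relatively compact; by local finiteness $K$ meets only finitely many simplices, the pointwise stabilizer of each simplex is a compact open subgroup of $G$ (a parahoric subgroup, up to finite index), and combining these finitely many subgroups with the finitely many ``simplex-permuting'' cosets shows the set in question lies in a finite union of cosets of a compact open subgroup. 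Finally, for arbitrary $G$ one argues by Galois descent: fixing an unramified $E/F$ over which $\GGG$ quasi-splits, $\mathscr{B}(\GGG(E))$ has all the properties just established and $\Gal(E/F)$ acts on it by simplicial isometries; the fixed-point set of a finite group of isometries of a complete $\mathrm{CAT}(0)$ space is a nonempty closed convex subset, hence again complete $\mathrm{CAT}(0)$ and contractible, and after passing to a suitable finite subdivision it is a subcomplex, so it is finite-dimensional and locally finite, with the residual $G=\GGG(F)$-action proper and simplicial as the restriction of the proper simplicial $\GGG(E)$-action.

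I expect the main obstacle to be the verification of the affine Tits-system / valuation-of-root-datum axioms in the quasi-split case---in particular, controlling the filtrations $U_\psi$ and their commutator relations precisely enough to identify $(G\times\mathscr{A}(G,\SSS))/\sim$ with the coset geometry and to obtain the exchange condition for $W_\GGG^{\aff}$; this is the genuinely hard input of Bruhat--Tits theory, whereas once the building axioms are in hand the four asserted properties are comparatively formal. A secondary delicate point is ensuring that the Galois descent in the general case preserves the polysimplicial (and not merely the metric) structure, which is the reason a subdivision may be required.
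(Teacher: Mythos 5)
The paper gives no proof of this theorem: it is stated as a black-box citation to \cite{bruhat1972buildings,bruhat1984buildings} (following the exposition in \cite[Section 1]{schneider1997sheaves}), so there is no internal argument to measure your sketch against. Your outline is, however, a faithful high-level summary of the standard Bruhat--Tits proof: the valuation of the root datum/affine Tits system giving the building axioms in the quasi-split case, finite-dimensionality from the apartments, local finiteness from finiteness of $\kkk$, contractibility from completeness and the CAT(0) inequality (or combinatorial retractions), and \'etale descent via the Bruhat--Tits fixed point theorem for the general case, with the correct warning that descent preserves the metric but not \emph{a priori} the polysimplicial structure, so a subdivision is needed. One caveat worth making explicit: as the paper defines $\mathscr{A}(G,\SSS)=(\XXX_*(\SSS)/\XXX_*(\ZZZ_\GGG^0))\otimes_\Z\R$, the split central torus acts trivially on $\mathscr{B}(G)$, so when $\ZZZ_\GGG^0(F)$ is non-compact (e.g.\ $\GGG=\GLN$) the $G$-action on $\mathscr{B}(G)$ is \emph{not} proper, and your claim that the pointwise stabilizer of a simplex is compact open fails for the same reason. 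Properness holds for $G$ modulo its split center, or for the $G$-action on the \emph{enlarged} building $\mathscr{B}^1(G)$, which is exactly why the paper introduces $\mathscr{B}^1(G)$ in the next subsection and takes the isotropy groups $\tilde{G}_x$ there. This imprecision is present in the paper's phrasing too, so it is a convention issue rather than a flaw specific to your argument, but your properness paragraph should be stated for $\mathscr{B}^1(G)$ (or for $G/\ZZZ_\GGG^0(F)$) to be literally correct.
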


Here, the simplices in $\mathscr{B}(G)$ are inherited from those in the apartments $\mathscr{A}(G,\SSS)$. We call such a simplex a \emph{facet}.\\

In the case that $\ZZZ_\GGG(F)$ is not compact, it will be more convenient for us to work with a slight modification of this building. Denote by $\Vv^1$ the dual of $\XXX^*(\GGG)\otimes_\Z\R$.

\begin{definition}
The \emph{enlarged Bruhat---Tits building} of the $p$-adic group $G=\GGG(F)$ is $\mathscr{B}^1(G)=\mathscr{B}(G)\times\Vv^1$. The action of $G$ on $\mathscr{B}(G)$, viewed as the subspace $\mathscr{B}(G)\times\{1\}$ of $\mathscr{B}^1(G)$, extends to an action on $\mathscr{B}^1(G)$ via $g\cdot(x,v)=(g\cdot x,v+\theta(g))$, where $\theta(g)(\chi)=-v_F(\chi(g))$.
\end{definition}

Say that a \emph{facet} of $\mathscr{B}^1(G)$ is a subspace of the form $V\times\Vv^1$, where $V\subset\mathscr{B}(G)$ is a facet in the above sense. We adopt the convention that facets in $\mathscr{B}(G)$ are closed --- the reader should note that some other authors take the facets to be the interior of these simplices (with the interior of a vertex being the vertex itself). We will always make it clear when we wish only to consider the interior. While a point $x\in\mathscr{B}(G)$ may be contained in multiple facets, there will always exist a unique facet of minimal dimension which contains $x$; we denote the facet in $\mathscr{B}^1(G)$ associated to this facet by $\bar{x}$.\\

Given a vertex $x\in\mathscr{B}(G)$, we say that the \emph{link} of $x$ is the union of the facets in which $x$ is contained.

\subsection{Parahoric group schemes}

\begin{proposition}[\cite{bruhat1972buildings,bruhat1984buildings}]
Let $x\in\mathscr{B}(G)$.
\begin{enumerate}[(i)]
\item The $G$-isotropy subgroup $\tilde{G}_x$ of $\bar{x}\subset\mathscr{B}^1(G)$ is a compact open subgroup of $G$.
\item There exists a unique smooth affine $\dedekind$-group subscheme $\tilde{\GGG}_x$ of $\GGG$ with generic fibre $\GGG$ such that $\tilde{\GGG}_x(\dedekind)=\tilde{G}_x$.
\end{enumerate}
\end{proposition}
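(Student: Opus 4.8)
Both parts of the proposition are due to Bruhat--Tits \cite{bruhat1972buildings,bruhat1984buildings}, so the plan is to organise their arguments rather than to establish anything new; I use freely the notation of the preceding subsections and the theorem on $\mathscr{B}(G)$ stated above. First I would reduce to the apartment: since $G$ permutes the apartments of $\mathscr{B}(G)$ transitively and every point of the building lies in an apartment, after conjugating I may assume $x\in\mathscr{A}(G,\SSS)$. Let $V$ be the facet of $\mathscr{B}(G)$ of minimal dimension containing $x$, so that $\bar{x}=V\times\Vv^1$. Because $G$ acts on $\mathscr{B}^1(G)=\mathscr{B}(G)\times\Vv^1$ by $g\cdot(y,v)=(g\cdot y,v+\theta(g))$ and translations of $\Vv^1$ preserve $\Vv^1$, an element $g\in G$ stabilizes $\bar{x}$ exactly when it stabilizes $V$; thus $\tilde{G}_x$ is the setwise stabilizer in $G$ of the facet $V$ (which is the fixer of $x$ when $x$ is a vertex).

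For (i), openness is easy: choosing a chamber $C$ of $\mathscr{B}(G)$ having $V$ as a face, the pointwise stabilizer $I$ of $C$ --- an Iwahori subgroup --- is open and compact, and since $I$ fixes $x\in C$ we get $I\subseteq\tilde{G}_x$, so $\tilde{G}_x$ is open. For compactness I would first note that $\tilde{G}_x$ is closed in $G$: it is an intersection of preimages of the closed set $V$ under the continuous orbit maps $g\mapsto g\cdot v$ for $v\in V$, together with the analogous conditions on $g^{-1}$. Then I would invoke properness of the $G$-action on $\mathscr{B}(G)$: the facet $V$ is compact, so $\{g\in G\mid g\cdot V\cap V\neq\emptyset\}$ is relatively compact in $G$, and $\tilde{G}_x$, being a closed subset of it, is compact. (Should one wish to avoid properness, the same follows from the Iwahori--Bruhat decomposition $G=\coprod_w I\dot{w}I$: the double cosets contained in $\tilde{G}_x$ are those indexed by the $w$ whose image in the extended affine Weyl group stabilizes $V$, of which there are finitely many, and each $I\dot{w}I$ is a finite union of translates of the compact group $I$.)

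For the uniqueness part of (ii), suppose $\mathcal{H}_1,\mathcal{H}_2$ are two smooth affine $\dedekind$-group schemes, each carrying an identification of its generic fibre with $\GGG$, and with $\mathcal{H}_1(\dedekind)=\mathcal{H}_2(\dedekind)=\tilde{G}_x$. The plan is a density argument. Passing to the ring of integers $\dedekind^{\ur}$ of the maximal unramified extension $F^{\ur}$, the $\dedekind^{\ur}$-points of the two schemes still agree, and since the residue field is now infinite they form a Zariski-dense subset of the smooth special fibres. Using separatedness, the identity on generic fibres then extends uniquely to a morphism $\mathcal{H}_{1}\to\mathcal{H}_{2}$ over $\dedekind^{\ur}$, and symmetrically to its inverse, so the two models are isomorphic over $\dedekind^{\ur}$ compatibly with the chosen trivialisations. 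By uniqueness this isomorphism is $\Gal(F^{\ur}/F)$-equivariant, and étale descent brings it down to an isomorphism over $\dedekind$.

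The existence part of (ii) is the genuinely hard step, and constitutes the structural heart of \cite{bruhat1984buildings}; here I would only indicate its shape. One first constructs the scheme attached to the connected stabilizer (the parahoric subgroup). Working over $\dedekind^{\ur}$, one assembles the smooth model $\mathcal{T}^{\circ}$ of the torus $\TTT$ together with, for each affine root $\psi$ such that $U_\psi$ fixes $x$, the smooth model $\mathcal{U}_\psi$ of the affine root group $U_\psi$, into a ``big cell'' of the form $\big(\prod_{\dot{\psi}<0}\mathcal{U}_\psi\big)\times\mathcal{T}^{\circ}\times\big(\prod_{\dot{\psi}>0}\mathcal{U}_\psi\big)$; using the commutation relations among root groups one glues the translates of this cell by suitable Weyl-coset representatives, and the delicate point --- the core Bruhat--Tits computation --- is to check that the result is representable by a smooth affine $\dedekind^{\ur}$-group scheme with generic fibre $\GGG_{F^{\ur}}$ and the prescribed group of integral points. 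Since the construction is Galois-equivariant, étale descent produces the connected parahoric scheme over $\dedekind$; finally, as $\tilde{G}_x$ normalizes the parahoric subgroup with finite quotient, one enlarges this scheme by the corresponding finite étale component group --- concretely, as the schematic closure of $\tilde{G}_x$ in a $\dedekind$-model of $\GGG$ containing the parahoric scheme --- to obtain the desired $\tilde{\GGG}_x$. The main obstacle is exactly this existence statement: verifying representability, smoothness, affineness and the exact group of $\dedekind$-points of the glued object requires the full apparatus of valued root data and the (split, then quasi-split) descent carried out in \cite{bruhat1984buildings}, which I would not reproduce here.
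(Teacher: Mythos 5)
The paper offers no proof of this proposition---it is cited directly from Bruhat--Tits---so the only question is whether your sketch is internally sound.

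There is a genuine gap in part (i), and it is precisely the subtlety the enlarged building is designed to handle. You deduce from $\bar{x}=V\times\Vv^1$ that an element $g$ stabilizes $\bar{x}$ if and only if $g\cdot V=V$, and conclude that $\tilde{G}_x$ is the setwise stabilizer of the facet $V$ in $\mathscr{B}(G)$. If $\GGG$ has non-compact centre (take $\GGG=\GLN$), the centre acts trivially on $\mathscr{B}(G)$, hence lies in the setwise stabilizer of every facet, and your $\tilde{G}_x$ is then not compact. The group that is compact---and is the group of $\dedekind$-points of $\tilde{\GGG}_x$, agreeing with the paper's later remark that $\tilde{G}_x$ is the maximal compact subgroup of $N_G(G_x)$---is
\[
\tilde{G}_x=\{\,g\in G:\ g\cdot V=V\ \text{and}\ \theta(g)=0\,\},
\]
i.e.\ the stabilizer of $V\times\{0\}$ (equivalently, of any single horizontal slice $V\times\{v\}$) in $\mathscr{B}^1(G)$, not of the whole vertical cylinder $V\times\Vv^1$. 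Once you identify $\tilde{G}_x$ with the stabilizer of $V$ alone, both of your compactness arguments break for the same reason: properness fails for $G$ acting on $\mathscr{B}(G)$ when $\ZZZ_\GGG(F)$ is non-compact (properness holds for $G$ on $\mathscr{B}^1(G)$, or for $G/\ZZZ_\GGG(F)$ on $\mathscr{B}(G)$); and in the Iwahori--Bruhat variant, the class of a central uniformizer in the extended affine Weyl group stabilizes every facet, so the set of $w$ stabilizing $V$ is infinite, not finite. Inserting the constraint $\theta(g)=0$ (equivalently $g\in\ker\theta$, which contains every compact subgroup) repairs both arguments: $\{g\in\ker\theta:g\cdot V=V\}$ is closed in $\ker\theta$, the $\ker\theta$-action on $\mathscr{B}(G)$ is proper, and only finitely many $\ker\theta$-double cosets of $I$ meet the stabilizer.

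The openness argument via a contained Iwahori subgroup is fine (note $I\subset\ker\theta$ since $I$ is compact), and the reduction to a single apartment, the Zariski-density-over-$\dedekind^{\ur}$ argument for uniqueness in (ii), and the sketched big-cell-plus-descent construction for existence are all reasonable accounts of the Bruhat--Tits argument. The one point to fix is the characterization of $\tilde{G}_x$ and, downstream of it, the compactness step.
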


In general, the schemes $\tilde{\GGG}_x$ will \emph{not} be connected. The special fibre of $\tilde{\GGG}_x$ will identify with a reductive $\kkk$-group scheme; the inverse image under the projection onto the special fibre of the connected component of this $\kkk$-group scheme is then a smooth connected $\dedekind$-group subscheme of $\tilde{\GGG}_x$, which we denote by $\GGG_x$. We call the connected group scheme $\GGG_x$ the \emph{parahoric group scheme} associated to $x$. Its group $G_x=\GGG_x(\dedekind)$ of $\dedekind$-rational points is then a compact open subgroup of $G$, which we call the \emph{parahoric subgroup} of $G$ associated to $x$.

\begin{proposition}[\cite{bruhat1972buildings,bruhat1984buildings}]
The group $G_x$ contains a unique maximal pro-$p$ normal subgroup $G_x^+$, which identifies with the group of $\dedekind$-rational points of the pro-unipotent radical of $\GGG_x$. The quotient $G_x/G_x^+$ is isomorphic to the group of $\kkk$-rational points of the connected component of the special fibre of $\tilde{\GGG}_x$, which is a finite reductive group over $\kkk$.
\end{proposition}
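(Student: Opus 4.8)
The plan is to deduce the statement from the smoothness of the parahoric group scheme $\GGG_x$ --- which is part of the Bruhat--Tits structure theory recalled above --- together with two standard facts about connected $\kkk$-groups, $\kkk$ being finite: Lang's theorem, and the triviality of $O_p$ for connected reductive groups. Write $\bar{\GGG}_x:=\GGG_x\times_\dedekind\kkk$ for the special fibre, a smooth connected $\kkk$-group, and let $\rho\colon G_x=\GGG_x(\dedekind)\to\bar{\GGG}_x(\kkk)$ denote reduction modulo $\frak{p}$. Since $\GGG_x$ is smooth over the complete local ring $\dedekind$, Hensel's lemma makes $\rho$ surjective, and its kernel is $G_x^{(1)}=\varprojlim_n\ker\bigl(\GGG_x(\dedekind/\frak{p}^{n+1})\to\GGG_x(\dedekind/\frak{p}^n)\bigr)$; smoothness identifies each of these finite kernels with a $\kkk$-vector space, so $G_x^{(1)}$ is pro-$p$.

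Next I would locate $G_x^+$ inside $G_x$. Since $\kkk$ is perfect, $\bar{\GGG}_x$ has a unipotent radical $\mathbf{U}_x:=R_u(\bar{\GGG}_x)$ defined over $\kkk$, with connected reductive quotient $\mathbf{M}_x:=\bar{\GGG}_x/\mathbf{U}_x$; this $\mathbf{M}_x$ is the reductive $\kkk$-group attached to the special fibre in the discussion above, and equals the whole special fibre exactly when the latter is already reductive --- for instance at a hyperspecial vertex, where $\mathbf{U}_x$ is trivial. Set $G_x^+:=\rho^{-1}\bigl(\mathbf{U}_x(\kkk)\bigr)$. A connected unipotent $\kkk$-group has a filtration with $\mathbf{G}_a$-quotients, so $\mathbf{U}_x(\kkk)$ is a finite $p$-group; hence $G_x^+$, an extension of a finite $p$-group by the pro-$p$ group $G_x^{(1)}$, is pro-$p$, and it is normal in $G_x$ since $\mathbf{U}_x$ is characteristic in $\bar{\GGG}_x$. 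By Lang's theorem $H^1(\kkk,\mathbf{U}_x)=0$, so $1\to\mathbf{U}_x(\kkk)\to\bar{\GGG}_x(\kkk)\to\mathbf{M}_x(\kkk)\to1$ is exact; together with the surjectivity of $\rho$ this yields $G_x/G_x^+\cong\mathbf{M}_x(\kkk)$, a finite reductive group. Moreover $G_x^+$ is by construction the preimage under reduction of the unipotent radical of the special fibre, which is exactly the group of $\dedekind$-points of the pro-unipotent radical of $\GGG_x$, so this also gives the asserted identification.

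It remains to show that $G_x^+$ is the \emph{unique} maximal pro-$p$ normal subgroup of $G_x$. Let $N\trianglelefteq G_x$ be pro-$p$. Then $\rho(N)$ is normal in $\bar{\GGG}_x(\kkk)$, so its image $\bar{N}$ in $\mathbf{M}_x(\kkk)$ is a normal $p$-subgroup of $\mathbf{M}_x(\kkk)$: it is a quotient of the pro-$p$ group $N$, and it is finite, being contained in the finite group $\mathbf{M}_x(\kkk)$. Since $\mathbf{M}_x$ is connected reductive over the finite field $\kkk$, one has $O_p(\mathbf{M}_x(\kkk))=1$ --- a standard consequence of the split $(B,N)$-pair structure of $\mathbf{M}_x(\kkk)$, equivalently of the facts that $\mathbf{M}_x$ has no nontrivial connected normal unipotent subgroup and is generated by its root subgroups together with a maximal torus. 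Hence $\bar{N}=1$, so $\rho(N)\subseteq\mathbf{U}_x(\kkk)$, i.e.\ $N\subseteq G_x^+$, as required.

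I expect the main obstacle to be the assertion $O_p(\mathbf{M}_x(\kkk))=1$: over very small residue fields, and for small groups, one must argue with some care, and the cleanest route is through the Bruhat decomposition of $\mathbf{M}_x(\kkk)$ rather than an appeal to abstract finite group theory. A secondary point needing care is the surjectivity of $\bar{\GGG}_x(\kkk)\to\mathbf{M}_x(\kkk)$, which genuinely uses the finiteness of $\kkk$ (through Lang) and fails over a general field; if one prefers, this can be sidestepped by adopting the convention that the special fibre of $\GGG_x$ is already reductive, in which case $G_x^+=G_x^{(1)}$ and the quotient assertion becomes immediate.
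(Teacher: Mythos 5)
The paper does not give its own proof of this proposition --- it is cited directly to Bruhat--Tits, so there is no argument in the paper to compare yours against. Your proof is the standard modern argument (smoothness over a Henselian base gives surjectivity of reduction with pro-$p$ kernel; Lang's theorem gives exactness of the induced sequence on rational points; triviality of $O_p$ for a finite group with a split $(B,N)$-pair in defining characteristic gives uniqueness), and it is correct. In fact your proof is stated slightly more carefully than the proposition itself: as you observe, the special fibre $\bar{\GGG}_x$ is \emph{not} reductive in general (it has a nontrivial unipotent radical except at hyperspecial points --- at an Iwahori it is a Borel), so the identification of $G_x/G_x^+$ is really with $\kkk$-points of the \emph{maximal reductive quotient} $\mathbf{M}_x$ of the connected component of the special fibre, not with the special fibre itself; the paper's phrasing elides this, while you address it explicitly. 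The one point you wave at rather than prove is that $\rho^{-1}(\mathbf{U}_x(\kkk))$ coincides with the $\dedekind$-points of the pro-unipotent radical subscheme of $\GGG_x$ --- this is a genuine scheme-theoretic identification (it requires knowing that the pro-unipotent radical is the smooth closed subscheme dilating $\mathbf{U}_x$ back to $\dedekind$), but it is standard and attributing it to Bruhat--Tits, as you implicitly do, is the right move. Your caveat about $O_p(\mathbf{M}_x(\kkk))=1$ is also well placed: the $(B,N)$-pair argument (a normal $p$-subgroup lies in all Sylow $p$-subgroups, hence in $U\cap U^-=1$) works uniformly including small $q$, and the torus case is handled separately by noting $|\mathbf{T}(\kkk)|$ is prime to $p$.
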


Moreover, we may \emph{explicitly} describe the groups $G_x$ and $G_x^+$ in terms of generators and relations (see, for example, \cite[2.3]{fintzen2015stable}):

\begin{proposition}\label{generators-of-parahorics}
Let $x\in\mathscr{B}(G)$. Then one has:
\begin{enumerate}[(i)]
\item $G_x=\langle \TTT(F)\cap G_x,U_\psi\ | \ \psi\in\Phi^\aff,\ \phi(x)\geq 0\rangle$; and
\item $G_x^+=\langle \TTT(F)\cap G_x,U_\psi\ | \ \psi\in\Phi^\aff,\ \psi(x)> 0\rangle$.
\end{enumerate}
\end{proposition}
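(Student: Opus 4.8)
The plan is to deduce this from the structure theory of Bruhat--Tits group schemes, using the explicit description of the building via the equivalence relation $\sim$ on $G\times\mathscr{A}(G,\SSS)$ together with the valuation-of-root-datum machinery recalled above. First I would fix $x\in\mathscr{A}(G,\SSS)$ (the general case reduces to this by $G$-conjugacy, since every point of $\mathscr{B}(G)$ is in the $G$-orbit of a point of the standard apartment, and both the parahoric subgroups and the affine root subgroups transform equivariantly). Write $H_x$ for the subgroup $\langle \TTT(F)\cap G_x,\,U_\psi\mid\psi\in\Phi^\aff,\ \dot\psi(x)+\psi_{\mathrm{const}}\geq 0\rangle$ appearing on the right-hand side of (i), where I am using the affine-root normalization so that ``$\psi(x)\ge 0$'' means the affine function $\psi=\dot\psi+n$ satisfies $\dot\psi(x)+n\ge 0$; by construction each generator of $H_x$ fixes $x$, and hence $\bar x$, so $H_x\subseteq \tilde G_x$, and in fact $H_x$ lands in the parahoric $G_x$ because the affine root subgroups and $\TTT(F)\cap G_x$ all lie in the connected stabilizer (the valuation datum constructions produce elements of the Bruhat--Tits group scheme's connected component). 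The reverse containment $G_x\subseteq H_x$ is where the real content lies.

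For the reverse inclusion I would argue as follows. The group $U_x$ used to define $\sim$ is, essentially by construction, generated by the $U_{\phi,f_x(\phi)}$ with $f_x(\phi)=-\phi(x)$, and $U_{\phi,-\phi(x)}$ is exactly $\prod$ of the affine root subgroups $U_\psi$ with $\dot\psi=\phi$ and $\psi(x)\ge 0$. Combining this with the Iwasawa/Bruhat-type decomposition for the stabilizer $\tilde G_x$ coming from the BN-pair attached to the apartment — namely that $\tilde G_x = (\tilde T_x)\cdot\langle U_\psi : \psi(x)\ge 0\rangle$ where $\tilde T_x = N_G(\SSS(F))\cap\tilde G_x$ — and then intersecting with the connected parahoric, one reduces the claim to the statement that $\TTT(F)\cap G_x$ together with the listed $U_\psi$'s already generate everything in $G_x$: the toral part of $G_x$ is $\TTT(F)\cap G_x$ by definition of the Moy--Prasad/Bruhat--Tits filtration on the torus, and the ``reflection'' elements $m(u)$ for $u\in U_\psi\setminus\{1\}$ with $\psi(x)=0$ are products of such $U_\psi$'s and already lie in $G_x$. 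I would cite \cite[2.3]{fintzen2015stable} and the Bruhat--Tits references for the precise form of this decomposition rather than reproving it. Statement (ii) for $G_x^+$ follows by the identical argument with every ``$\ge$'' replaced by ``$>$'': the pro-unipotent radical $G_x^+$ is generated by the torus congruence subgroup $\TTT(F)\cap G_x^+$ (which, one checks, equals $\TTT(F)\cap G_x$ intersected with the kernel of reduction, and this is subsumed in the $U_\psi$ with $\psi(x)>0$ in the simply-connected-derived-group case, but in general must be listed — so I would keep the toral generator as written) and the affine root subgroups strictly positive at $x$, since these are precisely the root subgroups that reduce trivially on the special fibre.

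The main obstacle I anticipate is bookkeeping around \emph{disconnectedness}: the stabilizer $\tilde G_x$ need not be connected, so one must be careful that the generators listed genuinely land in the connected parahoric $G_x$ rather than only in $\tilde G_x$, and conversely that passing to the connected component does not destroy the generation statement. This is handled by the fact — part of the Bruhat--Tits theory recalled in the previous subsection — that the affine root subgroups $U_\psi$ and the connected torus part $\TTT(F)\cap G_x$ generate exactly the connected parahoric, with the component group $\tilde G_x/G_x$ detected entirely by $N_G(\SSS(F))$. A secondary subtlety is the normalization convention: I have written the indexing condition as $\phi(x)\ge 0$ (resp.\ $\psi(x)>0$) following the statement, but internally one should read $\psi$ as the affine root and evaluate the affine function, and I would add a sentence at the start of the proof fixing this once and for all so the inequalities are unambiguous. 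Given these conventions, the proof is then a direct assembly of the decomposition of $\tilde G_x$, the identification of $U_x$ with the product of positive affine root subgroups, and the definition of $G_x$ as the preimage of the identity component of the special fibre.
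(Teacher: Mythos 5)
The paper does not prove this proposition; it is stated as a recalled fact with a citation to \cite[2.3]{fintzen2015stable}, which in turn rests on Bruhat--Tits \cite{bruhat1972buildings,bruhat1984buildings}. So there is no ``paper's own proof'' to compare against, and the relevant question is whether your outline reconstructs the standard argument. It largely does: reducing to $x$ in the standard apartment, observing that $\langle U_\psi : \dot\psi = \phi,\ \psi(x)\ge 0\rangle = U_{\phi,f_x(\phi)}$ (though note these subgroups are \emph{nested}, so the generated group is simply the largest one, not a product), invoking the Bruhat--Tits decomposition of the point stabilizer $\tilde G_x$ into an $N$-part times the group generated by the nonnegative affine root subgroups, and then passing to the connected component to identify the toral part as $\TTT(F)\cap G_x = T(F)_1$ — this is exactly the route Fintzen and Bruhat--Tits take, so your proposal is in the right spirit. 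The ``real content,'' as you correctly flag, is the decomposition of $\tilde G_x$ and the statement that the affine root subgroups and the torus kernel-of-Kottwitz land in (and generate) the connected component; you would need to cite those precisely rather than gesture at them.

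One issue to be careful with, though: the statement of the proposition as printed contains two typos, and your handling of the second one is muddled in a way that could propagate into a wrong proof. In (i), ``$\phi(x)\ge 0$'' should read ``$\psi(x)\ge 0$'' (evaluation of the affine function), which you correctly reinterpret. In (ii), however, the toral generator must be $\TTT(F)\cap G_x^+$ and not $\TTT(F)\cap G_x$: the latter contains elements whose image in the reductive quotient lies in $\kkk^\times$-type tori, hence of order prime to $p$, so it cannot sit inside the pro-$p$ group $G_x^+$. You correctly write $\TTT(F)\cap G_x^+$ in the body of your argument, but then conclude ``so I would keep the toral generator as written,'' which reads as an endorsement of the incorrect $\TTT(F)\cap G_x$. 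Make the correction explicit: part (ii) should be
\[G_x^+ = \bigl\langle\, \TTT(F)\cap G_x^+,\ U_\psi \ \big|\ \psi\in\Phi^\aff,\ \psi(x)>0 \,\bigr\rangle,\]
and your remark that this toral piece is \emph{not} in general subsumed by the $U_\psi$'s (only when the derived group is simply connected and the torus is induced) is the right reason to insist on listing it.
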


One also has an order-reversing bijection between the set of parahoric subgroups of $G$ and the simplicial skeleton of $\mathscr{B}(G)$:

\begin{proposition}[\cite{bruhat1972buildings,bruhat1984buildings}]
Let $x,y\in\mathscr{B}(G)$. Then one has $G_x\subset G_y$ if and only if $\bar{y}\subset\bar{x}$.
\end{proposition}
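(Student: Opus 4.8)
The plan is to extract both implications from the explicit generator description of parahoric subgroups in Proposition~\ref{generators-of-parahorics}, once the geometry has been transported into a single apartment. Since $\GGG(F)$ acts transitively on its maximal $F$-split tori, and hence on the apartments of $\mathscr{B}(G)$, and since any two points of $\mathscr{B}(G)$ lie in a common apartment, I may use an element of $G$ to arrange that $x,y\in\mathscr{A}(G,\SSS)$: this is harmless because $G$ acts simplicially, so $\overline{gz}=g\cdot\bar z$, and $G_{gz}={}^gG_z$, so neither side of the claimed equivalence is affected by replacing $(x,y)$ by $(gx,gy)$.

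The combinatorial core is the following reformulation. For $z\in\mathscr{A}(G,\SSS)$ put $\Phi^{\aff}_{\geq 0}(z)=\{\psi\in\Phi^{\aff}\ |\ \psi(z)\geq 0\}$. Using that $\Phi^{\aff}=-\Phi^{\aff}$, the closed facet of $\mathscr{A}(G,\SSS)$ whose relative interior contains $z$ is exactly $\{w\ |\ \psi(w)\geq 0\text{ for all }\psi\in\Phi^{\aff}_{\geq 0}(z)\}$ (the conditions $\psi(z)=0\Rightarrow\psi(w)=0$ and $\psi(z)<0\Rightarrow\psi(w)\leq 0$ being recovered by also feeding in $-\psi$). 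Since $\bar y$ is the minimal facet containing $y$, it is contained in the facet $\bar x$ precisely when $y\in\bar x$; hence $\bar y\subseteq\bar x$ if and only if $\Phi^{\aff}_{\geq 0}(x)\subseteq\Phi^{\aff}_{\geq 0}(y)$.

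Granting this, the implication $\bar y\subseteq\bar x\Rightarrow G_x\subseteq G_y$ is immediate from Proposition~\ref{generators-of-parahorics}(i): the generator $\TTT(F)\cap G_z$ is the unique parahoric subgroup of $\TTT(F)$ and so does not depend on $z\in\mathscr{A}(G,\SSS)$, while $\Phi^{\aff}_{\geq 0}(x)\subseteq\Phi^{\aff}_{\geq 0}(y)$ says exactly that each remaining generator $U_\psi$ of $G_x$ is also a generator of $G_y$. For the reverse implication I argue contrapositively. If $\bar y\not\subseteq\bar x$, choose $\psi\in\Phi^{\aff}_{\geq 0}(x)\setminus\Phi^{\aff}_{\geq 0}(y)$, so $\psi(x)\geq 0>\psi(y)$; then $U_\psi\subseteq G_x$ by Proposition~\ref{generators-of-parahorics}(i). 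Let $\psi'$ be the smallest affine root with gradient $\dot\psi$ satisfying $\psi'(y)\geq 0$ (this exists by discreteness of the filtration); then $\psi<\psi'$ as affine functions, so $U_\psi\supsetneq U_{\psi'}$, the inclusion being strict because distinct affine roots with a common gradient cut out distinct steps of the filtration of $U_{\dot\psi}(F)$. It therefore suffices to know $G_y\cap U_{\dot\psi}(F)=U_{\psi'}$, for then $U_\psi\not\subseteq G_y$, and since $U_\psi\subseteq G_x$ we conclude $G_x\not\subseteq G_y$.

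The one genuinely non-formal input, and where I expect the real work to be, is the last equality $G_y\cap U_{\dot\psi}(F)=U_{\psi'}$, i.e. that the parahoric $G_y$ contains no root-group elements beyond those dictated by the facet $\bar y$. This is precisely the content of the Bruhat--Tits structure theory of the groups $U_f$ attached to concave functions $f$, together with the computation of their intersections with individual root subgroups, applied to the concave function cutting out $G_y$; it is not formal from the bare generator list. Everything else in the argument is elementary convex geometry of affine root hyperplanes inside a single apartment.
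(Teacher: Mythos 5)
The paper does not prove this proposition itself; it is cited directly from Bruhat--Tits, so there is no internal argument to compare against. Your proof is the standard route and is correct modulo the one input you explicitly flag. The reduction to a single apartment via $G$-equivariance, the reformulation of $\bar y\subseteq\bar x$ as $\Phi^{\aff}_{\geq 0}(x)\subseteq\Phi^{\aff}_{\geq 0}(y)$ (using $\Phi^{\aff}=-\Phi^{\aff}$ to absorb the sign-pattern description of a closed facet into a one-sided inequality), and the deduction of $G_x\subseteq G_y$ from the generator list in Proposition~\ref{generators-of-parahorics}(i) together with the fact that $\TTT(F)\cap G_z$ is independent of $z$ are all sound. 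For the converse you have correctly isolated the crux: one needs the \emph{sharpness} of the generator description, namely that $G_y\cap\UUU_{\dot\psi}(F)$ is exactly the filtration subgroup $U_{\psi'}$ dictated by $y$ and nothing more. That is a genuine theorem of Bruhat--Tits --- part of the structure theory of the groups $U_f$ attached to concave functions, resting on the valued-root-datum axioms --- and it does \emph{not} follow formally from knowing a set of generators for $G_y$, so you are right to treat it as a black box rather than elide it. One small remark: the paper's $\bar x$ is a facet of the enlarged building $\mathscr{B}^1(G)$, of the form $V_x\times\Vv^1$ with $V_x\subset\mathscr{B}(G)$; since $\bar y\subseteq\bar x$ is equivalent to $V_y\subseteq V_x$, working with facets inside a single apartment of $\mathscr{B}(G)$ as you do is harmless and loses nothing.
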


In particular, the parahoric subgroup $G_x$ does not depend on the choice of point $x$ in the interior of the facet containing $x$. In the case that $x$ is contained in the interior of a facet of minimal dimension (a \emph{chamber}), the parahoric subgroup $G_x$ is a minimal parahoric subgroup, which we call an \emph{Iwahori subgroup}.\\

In a closely related manner, we may also associate to the interior of each facet in $\mathscr{B}(G)$ an $F$-Levi subgroup of $G$. The intuition behind this is as follows: if $G_y\subset G_x$ is a strict containment of parahoric subgroups of $G$, then the image in $G_x/G_x^+$ of $G_y$ identifies with a proper parabolic subgroup of $G_x/G_x^+$. The Levi subgroup $M$ we construct will then be precisely the one such that the image in $G_x/G_x^+$ of $M\cap G_x$ identifies with the standard Levi factor of the image of $G_y$.

\begin{proposition}[{\cite[Proposition 6.4]{moy1996unrefined}}]\label{existence-of-levi}
Let $x\in\mathscr{B}(G)$. The algebraic subgroup $\MMM$ of $\GGG$ generated by $\TTT$, together with the root subgroups $\UUU_\phi$ for those $\phi\in\Phi$ such that some affine root $\phi+n$, $n\in\Z$, vanishes on $\bar{x}$, is an $F$-Levi subgroup of $G$. The group $\MMM(F)\cap G_x$ is a maximal parahoric subgroup of $\MMM(F)$, and there is a natural identification $(\MMM(F)\cap G_x)/(\MMM(F)\cap G_x)^+=G_x/G_x^+$.
\end{proposition}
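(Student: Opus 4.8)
The plan is to compare the affine root subgroup presentation of parahoric subgroups (Proposition~\ref{generators-of-parahorics}) for $\GGG$ with the corresponding presentation for $\MMM$, after first recognising $\MMM$ as a Levi. Write $\Phi_x\subseteq\Phi$ for the set of roots $\phi$ admitting $n\in\Z$ with $\phi+n$ vanishing on $\bar{x}$, so that $\MMM=\langle\TTT,\UUU_\phi\mid\phi\in\Phi_x\rangle$, and put $\SSS_x=\bigl(\bigcap_{\phi\in\Phi_x}\ker\phi\bigr)^{\circ}$, an $F$-split subtorus of $\SSS$. The first step is to verify that $\MMM$ coincides with the centraliser $\ZZZ_\GGG(\SSS_x)$ of $\SSS_x$ in $\GGG$; as the centraliser of an $F$-split torus this is an $F$-Levi subgroup, so $\MMM$ is an $F$-Levi, with $\TTT\subseteq\MMM$ and $\SSS$ a maximal split torus of $\MMM$. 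One moreover checks that $\SSS_x$ is the maximal split subtorus of the connected centre of $\MMM$, so that $\Phi_x$ spans the $\R$-subspace $\XXX^*(\SSS/\SSS_x)\otimes_\Z\R$ of $\XXX^*(\SSS)\otimes_\Z\R$; this subspace is also the span of the full root system $\Phi(\MMM,\SSS)$, and is precisely the space of linear parts of affine functions on $\mathscr{A}(\MMM,\SSS)$, a fact I will use at the end.

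Now regard $x$ as a point of the building $\mathscr{B}(\MMM)\subseteq\mathscr{B}(G)$, write $M=\MMM(F)$, and let $M_x$ and $M_x^{+}$ denote the parahoric subgroup of $M$ attached to $x$ and its pro-unipotent radical. The central claim is that $M\cap G_x=M_x$ and $M\cap G_x^{+}=M_x^{+}$. By Proposition~\ref{generators-of-parahorics}, $G_x$ is generated by $\TTT(F)\cap G_x$ together with the affine root subgroups $U_\psi$ with $\psi(x)\ge 0$, and $M_x$ by $\TTT(F)\cap M_x$ together with those same $U_\psi$ that in addition satisfy $\dot{\psi}\in\Phi(\MMM,\SSS)$; since $\TTT(F)\cap G_x$ and $\TTT(F)\cap M_x$ both equal the unique parahoric subgroup of the torus $\TTT(F)$, the inclusions $M_x\subseteq M\cap G_x$ and $M_x^{+}\subseteq M\cap G_x^{+}$ are immediate. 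For the reverse inclusions I would establish the Iwahori-type factorisation of $G_x$ relative to a parabolic $P=MN$ of $G$ with Levi factor $M$, unipotent radical $N$ and opposite radical $\overline{N}$: grouping the affine root subgroups of Proposition~\ref{generators-of-parahorics} according to whether $\dot{\psi}$ lies in $\Phi(\overline{\NNN})$, in $\Phi(\MMM,\SSS)$, or in $\Phi(\NNN)$, and choosing a suitable ordering, one obtains that the multiplication map
\[
(G_x\cap\overline{N})\times M_x\times(G_x\cap N)\longrightarrow G_x
\]
is a bijection, and similarly for $G_x^{+}$ (using part (ii) of Proposition~\ref{generators-of-parahorics}) with middle factor $M_x^{+}$. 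Since $\overline{N}MN$ is the open cell of $P$, any element of $M$ written as a product $\overline{n}\,m\,n$ with $\overline{n}\in\overline N$, $m\in M$, $n\in N$ must have $\overline{n}=n=1$; applying this to $g\in M\cap G_x$ (respectively $g\in M\cap G_x^{+}$) gives $g\in M_x$ (respectively $g\in M_x^{+}$), which is the claim.

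Granting this, the two remaining assertions follow quickly. First, $\MMM(F)\cap G_x=M_x$ is a \emph{maximal} parahoric of $M$: the affine roots of $\MMM$ that vanish at $x$ have gradients exactly $\Phi_x$, and by the first paragraph these span the space of linear parts of affine functions on $\mathscr{A}(\MMM,\SSS)$, so the walls of $\mathscr{B}(\MMM)$ through $x$ meet in the single point $x$; hence $x$ is a vertex of $\mathscr{B}(\MMM)$ and $M_x$ is a maximal parahoric. Second, the inclusion $M_x\hookrightarrow G_x$ induces a homomorphism $M_x/M_x^{+}\to G_x/G_x^{+}$; it is injective, since its kernel is $(M_x\cap G_x^{+})/M_x^{+}=(M\cap G_x^{+})/M_x^{+}$, trivial by the central claim; and it is surjective, since $G_x/G_x^{+}$ is generated by the image of $\TTT(F)\cap G_x$ together with the images of the $U_\psi$ with $\psi(x)=0$, and every such $\psi$ satisfies $\dot{\psi}\in\Phi_x\subseteq\Phi(\MMM,\SSS)$, so that $U_\psi$ already occurs among the generators of $M_x$. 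This yields the natural identification $(\MMM(F)\cap G_x)/(\MMM(F)\cap G_x)^{+}=G_x/G_x^{+}$.

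The main technical obstacle is the Iwahori-type factorisation of $G_x$ (and of $G_x^{+}$) compatible with $P=MN$, and in particular the identification of its middle factor with the \emph{connected} parahoric $M_x$ rather than with some strictly larger bounded subgroup of $M$: making this precise for a general parahoric, rather than just for an Iwahori subgroup, forces one to work throughout at the level of the explicit affine root subgroup generators of Proposition~\ref{generators-of-parahorics} and to keep careful track of the distinction between the connected parahorics $G_x$, $M_x$ and the full stabilisers of facets.
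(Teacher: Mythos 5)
The paper does not actually prove this proposition: it is imported wholesale from Moy--Prasad (the citation \cite[Proposition 6.4]{moy1996unrefined} is part of the statement), so there is no in-text proof to compare your attempt against. What follows is therefore a comment on your sketch on its own terms.

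Your outline follows the standard route and the overall structure is sound. There are, however, two substantive gaps, one of which you flag yourself. First, the identification $\MMM=\ZZZ_\GGG(\SSS_x)$ is asserted but not checked. It amounts to the equality $\Phi_x=\{\phi\in\Phi:\phi|_{\SSS_x}=1\}$, i.e.\ that $\Phi_x$ already contains every root lying in its real span. This is not formal: $\Phi_x$ is defined by an integrality condition on affine roots, and one must argue (using the structure of the affine root system, and paying attention to the non-reduced case where $\UUU_\phi$ has Lie algebra $\frak{g}_\phi+\frak{g}_{2\phi}$ and multipliable roots occur) that no further roots are swept in by taking the span. Without this, $\MMM$ need not be $\ZZZ_\GGG(\SSS_x)$ and the claim that it is an $F$-Levi does not yet follow. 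Second, the Iwahori-type factorisation $G_x=(G_x\cap\overline N)\,M_x\,(G_x\cap N)$ with middle factor precisely the \emph{connected} parahoric $M_x$ is the heart of the matter, and you defer it. Deriving it from Proposition~\ref{generators-of-parahorics} requires more than regrouping generators: one needs the commutation relations among the $U_\psi$ to rearrange an arbitrary word into the stated product form without leaving $G_x$, and one must rule out picking up elements of the finite group $(M\cap G_x)/M_x$ of components. As you note, this is exactly where the distinction between $G_x$, $M_x$ and the full facet stabilisers must be tracked; as written the argument is a plan for a proof rather than a proof. Given both of these points, the remaining deductions (that $x$ is a vertex of $\mathscr{B}(\MMM)$, and the identification of reductive quotients by matching generators of the form $U_\psi$ with $\psi(x)=0$, which, since $x$ is interior to its facet, coincide with those $\psi$ vanishing on $\bar{x}$) are correct and clean.
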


\section{Depth-zero types}

\subsection{Types}

Let us begin by recalling the definition of a type, for which it will be convenient for us to recall the Bernstein decomposition of the category $\Rep(G)$ of smooth complex representations of $G$. Say that a \emph{cuspidal datum} in $G$ is a pair $(M,\zeta)$ consisting of an $F$-Levi subgroup $M$ of $G$ and a supercuspidal representation $\zeta$ of $M$. We say that two cuspidal data $(M,\zeta)$ and $(M',\zeta')$ are \emph{inertially equivalent} if there exists an unramified character $\omega$ of $M$ such that $(M',\zeta')$ is $G$-conjugate to $(M,\zeta\otimes\omega)$. Let $\BB(G)$ denote the set of inertial equivalence classes of cuspidal data, with $[M,\zeta]_G$ denoting the class of $(M,\zeta)$. An irreducible representation $\pi$ of $G$ has a unique (up to conjugacy) \emph{supercuspidal support}, i.e. a unique $G$-conjugacy class $(M,\zeta)$ of cuspidal data such that $\pi$ is isomorphic to a subquotient of the parabolically induced representation $\Ind_{M,P}^G\ \zeta$, where $P$ is some $F$-parabolic subgroup of $G$ with Levi factor $M$. The \emph{inertial support} of $\pi$ is the inertial equivalence class of its supercuspidal support.\\

To each $\frak{s}\in\BB(G)$, we associate a full subcategory $\Rep^{\frak{s}}(G)$ of $\Rep(G)$ consisting of those representations all of whose irreducible subquotients are of inertial support $\frak{s}$.

\begin{theorem}[\cite{bernstein1984centre}]
The subcategories $\Rep^{\frak{s}}(G)$ are indecomposable, and one has 
\[\Rep(G)=\prod_{\frak{s}\in\BB(G)}\Rep^{\frak{s}}(G).
\]
\end{theorem}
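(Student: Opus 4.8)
We sketch Bernstein's original argument. The two assertions --- that $\Rep(G)$ is the product of the $\Rep^{\mathfrak{s}}(G)$, and that each factor is indecomposable --- are established by rather different means, so I would treat them separately.

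\textbf{The decomposition.} By the uniqueness up to $G$-conjugacy of the supercuspidal support of an irreducible representation, every $\pi\in\Irr(G)$ has a well-defined inertial support, so the subcategories $\Rep^{\mathfrak{s}}(G)$ partition $\Irr(G)$. The plan is to attach to an arbitrary $V\in\Rep(G)$ the subrepresentation $V^{\mathfrak{s}}$ spanned by all subrepresentations of $V$ all of whose irreducible subquotients lie in $\Rep^{\mathfrak{s}}(G)$, and to prove that $V=\bigoplus_{\mathfrak{s}}V^{\mathfrak{s}}$. That $V^{\mathfrak{s}}$ lies in $\Rep^{\mathfrak{s}}(G)$, and that each $\Rep^{\mathfrak{s}}(G)$ is closed under subquotients and arbitrary direct sums, is formal once one knows that the inertial support of a subquotient of an induced representation is controlled; this reduces to the exactness of the Jacquet functors $r_P^G$ and of parabolic induction $i_P^G$ together with the geometric lemma computing $r_Q^G\circ i_P^G$ as a sum of functors of the form $i\circ({}^w\cdot)\circ r$. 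Directness of the sum is easy: a nonzero $W\subseteq V^{\mathfrak{s}}\cap\sum_{\mathfrak{t}\neq\mathfrak{s}}V^{\mathfrak{t}}$ would possess an irreducible subquotient of inertial support simultaneously $\mathfrak{s}$ and some $\mathfrak{t}\neq\mathfrak{s}$, which is absurd.

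\textbf{Exhaustion.} The substantive point is that $V=\sum_{\mathfrak{s}}V^{\mathfrak{s}}$. First treat a finitely generated $V$, realized as a quotient of some $\cInd_K^G(W_0)$ with $K$ compact open and $W_0$ finite-dimensional. An induction on the semisimple rank of $G$ --- using second adjointness, the fact that $r_P^G\cInd_K^G(W_0)$ is again finitely generated, and the fact that the supercuspidal representations of a Levi $M$ occurring in a finitely generated representation of $M$ span only finitely many $\Xnr(M)$-orbits --- shows that only finitely many $\mathfrak{s}$ occur among the subquotients of $V$. A Noetherian-type argument then produces a finite filtration of $V$ whose graded pieces are distributed among the finitely many relevant $\Rep^{\mathfrak{s}}(G)$, and these pieces can be separated off to give $V=\bigoplus_{\mathfrak{s}}V^{\mathfrak{s}}$. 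The general case follows by writing $V$ as a filtered union of finitely generated subrepresentations and passing to the colimit.

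\textbf{Indecomposability.} Fix $\mathfrak{s}=[M,\sigma]_G$ and suppose $\Rep^{\mathfrak{s}}(G)=\mathcal{A}_1\times\mathcal{A}_2$ as abelian categories. Every twisted parabolic induction $i_P^G(\sigma\otimes\chi)$, for $\chi\in\Xnr(M)$, lies in $\Rep^{\mathfrak{s}}(G)$, and every irreducible object of $\Rep^{\mathfrak{s}}(G)$ occurs as a subquotient of one of these; so it suffices to show that all the $i_P^G(\sigma\otimes\chi)$ lie in a single factor $\mathcal{A}_i$, whence the other factor vanishes. For this, organize the family $\{i_P^G(\sigma\otimes\chi)\}_\chi$ into a single module over $\mathcal{O}(\Xnr(M))\otimes\mathcal{H}(G)$: a product decomposition of the category corresponds to an idempotent in the relevant endomorphism ring, which would split this family along a clopen subset of $\Spec\mathcal{O}(\Xnr(M))$; but $\Xnr(M)$ is an algebraic torus, hence connected, so no nontrivial clopen exists, and the family is indecomposable.

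\textbf{Main obstacle.} The crux is the exhaustion step: proving that an arbitrary smooth representation is the sum of its $\mathfrak{s}$-components. This is where the finiteness results on cuspidal supports of finitely generated representations (obtained via the geometric lemma, second adjointness, and induction on rank) are genuinely needed; the directness, the closure properties of the $\Rep^{\mathfrak{s}}(G)$, and the connectedness argument for indecomposability are comparatively formal.
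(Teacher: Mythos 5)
The paper does not prove this statement; it is quoted directly from Bernstein's work with the citation \cite{bernstein1984centre}, so there is no internal argument against which to compare. Your sketch is a faithful account of Bernstein's strategy, and the three-part organization (directness, exhaustion, indecomposability via connectedness of $\Xnr(M)$) matches the standard presentations.

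One place you gloss over what is really the crux: in the exhaustion step, producing a finite filtration whose graded pieces are distributed among finitely many blocks is not yet enough --- to ``separate the pieces off'' you need to know that there are no nontrivial extensions between objects of distinct inertial support, i.e.\ that $\mathrm{Ext}^1$ vanishes across blocks. This is exactly what the second adjointness and geometric-lemma machinery you invoke are for, but your phrasing makes it sound like the splitting is automatic once the filtration exists, when in fact this Ext-vanishing \emph{is} the exhaustion statement in disguise (a non-split extension of a $\mathfrak{t}$-representation by an $\mathfrak{s}$-representation would be a $V$ with $V \neq V^{\mathfrak{s}} + V^{\mathfrak{t}}$). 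A cleaner way to organize the argument is to prove directly that $\Hom_G$ and $\mathrm{Ext}^1_G$ vanish between the projective generators of distinct blocks and deduce both exhaustion and the product decomposition from that. A minor second point: the relevant connected variety for indecomposability is really $\Xnr(M)$ modulo the finite stabilizer group of $[M,\sigma]_M$, but a finite quotient of a connected torus is still connected, so your argument survives.
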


More generally, given a subset $\SS$ of $\BB(G)$, let $\Rep^\SS(G)=\prod_{\frak{s}\in\SS}\Rep^{\frak{s}}(G)$. A type then allows one to describe these indecomposable subcategories $\Rep^{\frak{s}}(G)$ in terms of the representation theory of compact open subgroups of $G$:

\begin{definition}
Let $(J,\lambda)$ be a pair consisting of a smooth irreducible representation $\lambda$ of a compact open subgroup $J$ of $G$, and let $\SS\subset\BB(G)$.
\begin{enumerate}[(i)]
\item We say that $(J,\lambda)$ is $\SS$-\emph{typical} if, for any irreducible representation $\pi'$ of $G$, one has that $\Hom_J(\pi'\downharpoonright_J,\lambda)\neq 0\Rightarrow\pi'\in\Rep^{\SS}(G)$.
\item We say that $(J,\lambda)$ is an $\SS$-\emph{type} if it is $\SS$-typical and, given any irreducible representation $\pi'$ in $\Rep^{\SS}(G)$, one has $\Hom_J(\pi'\downharpoonright_J,\lambda)\neq 0$.
\end{enumerate}
In the case that $\SS=\{\frak{s}\}$ is a singleton, we simply speak of $\frak{s}$-types.
\end{definition}

We will require a slight modification of this definition: the notion of an \emph{archetype}, which the author introduced in \cite{latham2015sl2} and \cite{latham2015sln}; these two papers elaborate further on the motivation behind the definition.

\begin{definition}
Let $\frak{s}\in\BB(G)$. An $\frak{s}$-\emph{archetype} is a $G$-conjugacy class of $\frak{s}$-typical representations of a maximal compact subgroup of $G$.
\end{definition}

We will often abuse terminology and simply speak of a single pair $(K,\tau)$ with $K$ maximal compact in $G$ as being an archetype; it should always be understood that we are considering questions modulo conjugacy (as it is easy to see that any conjugate of a type is still a type).\\

In the aforementioned papers, the author suggested the following conjecture:

\begin{conjecture}[The unicity of types for supercuspidals]\label{unicity-conjecture}
Let $\pi$ be a supercuspidal representation of $G$. Then there exists a $[G,\pi]_G$-archetype, and there exists at most one $[G,\pi]_G$-archetype defined on each $G$-conjugacy class of maximal compact subgroups of $G$.
\end{conjecture}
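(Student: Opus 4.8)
\emph{Overall strategy.} The plan is to reduce the general conjecture to Yu's construction of tame supercuspidal representations and to argue by induction on the length $d$ of the associated twisted Levi sequence, with the depth-zero case ($d=0$) established in the body of this paper as the base case. Under a hypothesis on the residue characteristic guaranteeing exhaustion (work of Kim and Fintzen), every supercuspidal $\pi$ of $G$ is of the form $\cInd_{\tilde J}^G\tilde\lambda$, where the datum consists of a tamely ramified twisted Levi sequence $G^0\subset G^1\subset\cdots\subset G^d=G$, a depth-zero supercuspidal $\rho$ of $G^0$, and characters $\phi_i$ of $G^i$ of strictly increasing positive depth. Existence of a $[G,\pi]_G$-archetype is the easier half: as in the depth-zero case, and as in the $\GLN$ and $\SLN$ cases of Henniart--Pa\v{s}k\={u}nas and the author, the group $\tilde J$ is contained in a maximal compact subgroup $K$ and controls its typical representations, so an appropriate irreducible constituent of the induction of $\tilde\lambda$ to $K$ is an archetype.

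\emph{Unicity at the level of parahoric subgroups.} Following the scheme of the depth-zero argument, I would first establish unicity on the level of parahoric subgroups: for a maximal parahoric $G_y$ of $G$ and a typical representation $\sigma'$ of $G_y$, one shows that $\sigma'$ arises from Yu's datum in the expected way. One decomposes $\Res_{G_y}^G\pi$ into irreducibles; the ``expected'' constituents form one family, and for each remaining constituent $\tau$ the aim is to produce a type $(J_M,\lambda_M)$ for a \emph{proper} $F$-Levi subgroup $M$ of $G$, together with a $G$-cover $(J_P,\lambda_P)$ of it in the sense of Bushnell--Kutzko, such that $\tau$ occurs in $\cInd_{J_P}^G\lambda_P$; since $(J_P,\lambda_P)$ is then a type for a Bernstein component with proper cuspidal support, $\tau$ cannot be typical for $[G,\pi]_G$. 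The covers needed here are precisely those furnished by the compatibility of Yu's construction with parabolic induction, as analysed by Kim--Yu and (in the abstract) Bushnell--Kutzko; the depth-zero instance is Morris's explicit construction of covers used in this paper.

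\emph{From parahorics to maximal compacts.} The parahoric-level statement is then upgraded to the conjecture exactly as in the depth-zero case. One first rules out, using the explicit description of the atypical constituents obtained above, the possibility of a $[G,\pi]_G$-typical representation of a maximal compact subgroup of $G$ that normalizes no maximal parahoric. For a maximal compact $K$ containing a maximal parahoric $G_x$, one restricts a putative archetype $(K,\tau)$ to $G_x$, uses the parahoric-level classification to see that $\Res_{G_x}^K\tau$ is a sum of expected types, and finishes with the Clifford theory of the normal subgroup $G_x\triangleleft K$. This step is formal once the parahoric-level result and its explicit form are available.

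\emph{The main obstacle.} The heart of the difficulty, and the place where genuinely new work is required beyond the depth-zero case, is the \emph{matching} of covers in positive depth. One needs not merely the existence of covers of proper-Levi types, but enough control of the restriction of $\tilde\lambda$ to the parahoric subgroups $G_y$ --- equivalently, of the relevant Hecke algebras and their Mackey-theoretic decompositions --- to identify each unexpected constituent $\tau$ with a constituent of an \emph{explicitly induced} cover. In depth zero the Moy--Prasad filtration and the finite-reductive-group structure of $G_y/G_y^+$ make this transparent, and Morris's covers are explicit; in the layered setting of Yu's construction one lacks a comparably explicit description, and supplying it is the real obstruction. A secondary limitation is that the reduction to Yu's construction, and hence the argument as a whole, currently rests on a hypothesis on $p$; the non-tame supercuspidals would require separate treatment.
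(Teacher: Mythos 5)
The statement you are proving is stated in the paper as a \emph{conjecture}: the paper itself offers no proof of it in this generality, and establishes it only for depth-zero supercuspidals (Theorem \ref{refined-unicity}), which is exactly the case your proposal takes as its base. So there is no proof in the paper to compare yours against; the relevant question is whether your argument actually closes the general case, and it does not. You have written a research programme rather than a proof, and you say so yourself: the ``main obstacle'' paragraph concedes that the Mackey-theoretic matching of each unexpected constituent $\tau$ of $\pi\downharpoonright_{G_y}$ with a constituent of an explicitly induced cover of a proper-Levi type is not supplied. That matching is not a technical detail --- it is the entire content of the depth-zero proof (the whole of Section \ref{section:parahoric-unicity} is devoted to producing the cover $(J_\xi,\lambda_\xi)$ and exhibiting a nonzero map $\cInd_{J_\xi}^G\lambda_\xi\rightarrow\cInd_{G_y}^G\tau$ by explicit computation with Iwahori decompositions and generators of parahorics). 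In positive depth the analogous computation with Yu's types is precisely what is missing, so the central step of the argument is asserted rather than proved.

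There are further gaps even granting that step. First, the ``easier half'' (existence of an archetype) is not automatic: in depth zero it is Morris's refinement theorem (Theorem \ref{morris-refinement}), and in positive depth one must actually show that some irreducible constituent of $\Ind_{\tilde J}^K\tilde\lambda$ is typical, which requires the same control of atypical constituents you have deferred. Second, the reduction to Yu's construction needs an exhaustion hypothesis on the residue characteristic, so the conjecture for non-tame supercuspidals is untouched; the conjecture as stated has no such hypothesis. Third, you announce an induction on the length $d$ of the twisted Levi sequence but never use the inductive hypothesis anywhere in the sketch. As it stands, your text is a plausible outline of how the depth-zero strategy might generalize --- consistent with how this problem has in fact been attacked in subsequent work --- but it is not a proof of the conjecture.
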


This conjecture then naturally raises the question of on \emph{which} conjugacy classes of maximal compact subgroups there exists a $[G,\pi]_G$-type. This however, seems to still be beyond understanding for the present, with the answer only known for $\GLN(F)$ (where there is a single conjugacy class) and $\SLN(F)$ \cite{latham2015sl2,latham2015sln}.\\

Our goal in this paper is to establish this conjecture in the case that $\pi$ is of depth-zero. Recall that we say that an irreducible representation $\pi$ of $G$ is of depth-zero if it has a non-zero vector fixed by the pro-unipotent radical of some parahoric subgroup of $G$. We also answer the question of how many archetypes are contained in such a representation: there will turn out to be precisely one.

\subsection{Unrefined depth-zero types}

In \cite{moy1994unrefined,moy1996unrefined,morris1999levelzero}, Moy---Prasad and Morris construct natural conjugacy classes of types for depth-zero supercuspidals $\pi$. We briefly recall these constructions.

\begin{definition}
An \emph{(unrefined) depth-zero type} in $G$ is a pair $(G_x,\sigma)$ consisting of a parahoric subgroup $G_x$ of $G$ and an irreducible cuspidal representation $\sigma$ of $G_x/G_x^+$.
\end{definition}

The etymology of these depth-zero types is due to the following:

\begin{theorem}[{\cite[Theorem 4.5]{morris1999levelzero}}]
Let $(G_x,\sigma)$ be an unrefined depth-zero type in $G$. Then there exists a finite set $\SS_\sigma\subset\BB(G)$ such that $(G_x,\sigma)$ is an $\SS_\sigma$-type. Any depth-zero irreducible representation $\pi$ of $G$ contains a unique $G$-conjugacy class of unrefined depth-zero types. Moreover, if $G_x$ is a maximal parahoric subgroup of $G$, then $\Irr^{\SS_\sigma}(G)$ consists only of supercuspidal representations.
\end{theorem}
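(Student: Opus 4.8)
The plan is to establish the three assertions in turn: the existence and finiteness of $\SS_\sigma$ together with the type property, the uniqueness of the unrefined depth-zero type inside a given $\pi$, and the supercuspidality statement in the maximal case; and to reduce the first to the last via the theory of covers.

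\emph{The set $\SS_\sigma$ and typicality.} First I would inflate $\sigma$ through $G_x\to G_x/G_x^+$ and, abusing notation, still write $\sigma$ for the resulting representation of $G_x$. Define $\SS_\sigma$ to be the set of those $\frak{s}\in\BB(G)$ for which some irreducible $\pi'\in\Rep^{\frak{s}}(G)$ satisfies $\Hom_{G_x}(\sigma,\Res_{G_x}^G\pi')\neq 0$; by Frobenius reciprocity and the Bernstein decomposition this is precisely the set of components on which $\cInd_{G_x}^G\sigma$ has non-zero projection. Typicality of $(G_x,\sigma)$ for $\SS_\sigma$ is then immediate, and the real content is that $\SS_\sigma$ is finite and that $(G_x,\sigma)$ is in fact an $\SS_\sigma$-type. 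To get at this I would compute the intertwining set $I_G(\sigma)=\{g\in G:\Hom_{G_x\cap{}^g\!G_x}(\sigma,{}^g\sigma)\neq 0\}$: since $\sigma$ is \emph{cuspidal} on the finite reductive quotient, Harish-Chandra/Deligne--Lusztig theory tells us its parabolic restriction to every proper parabolic of $G_x/G_x^+$ vanishes, and Proposition \ref{generators-of-parahorics} together with the geometry of $\mathscr{B}(G)$ forces the relevant $\Hom$-space to vanish unless $G_x\cap{}^g\!G_x$ surjects onto $G_x/G_x^+$.

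\emph{Reduction to a Levi, and the maximal case.} Let $\MMM$ be the $F$-Levi subgroup attached to the facet of $x$ by Proposition \ref{existence-of-levi}, so that $M\cap G_x$ is a \emph{maximal} parahoric of $M$ with $(M\cap G_x)/(M\cap G_x)^+\cong G_x/G_x^+$; thus $(M\cap G_x,\sigma)$ is again an unrefined depth-zero type, now on a maximal parahoric of $M$. I would prove the maximal case directly: when $G_x$ is maximal the Levi $\MMM$ is all of $\GGG$, and the intertwining computation above shows $I_G(\sigma)$ is contained in the $G$-normalizer of $G_x$, which is compact modulo the centre. Hence $\Hh(G,\sigma)$ is supported on that normalizer, so every irreducible subquotient $\pi'$ of $\cInd_{G_x}^G\sigma$ has $\pi'^{G_x^+}\neq 0$ carrying a cuspidal action of $G_x/G_x^+$; a Jacquet-module computation then shows the Jacquet module of $\pi'$ along any proper parabolic $P=MN$ of $G$ vanishes, because passing to $\pi'_N$ corresponds to a proper-parabolic restriction on the finite group, which kills cuspidal constituents. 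Therefore $\Irr^{\SS_\sigma}(G)$ consists only of supercuspidals, and in particular $\SS_\sigma$ is finite and $(G_x,\sigma)$ is an $\SS_\sigma$-type in this case. For general $x$, I would realize $(G_x,\sigma)$ as a $G$-cover (in the sense of \cite{bushnell1998structure}) of the supercuspidal type $(M\cap G_x,\sigma)$ for $M$ using Morris's explicit cover construction; the resulting affine-Hecke-algebra description of $\Hh(G,\sigma)$ exhibits it as a finite module over a centre isomorphic to $\C[\SS_\sigma]$, where $\SS_\sigma=\{[M,\zeta\otimes\omega]_G\}$ and $\zeta$ ranges over the finitely many supercuspidals of $M$ inflating $\sigma|_{M\cap G_x}$ up to unramified twist. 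Finiteness of $\SS_\sigma$ and the type property then follow from the general cover formalism.

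\emph{Uniqueness inside $\pi$.} Given a depth-zero irreducible $\pi$, choose a facet $y$ of minimal dimension with $\pi^{G_y^+}\neq 0$. Then $\pi^{G_y^+}$ is a non-zero representation of $G_y/G_y^+$, and I claim it has a cuspidal constituent: otherwise every constituent would occur in a representation parabolically induced from a proper parabolic $\overline P$ of $G_y/G_y^+$, and unwinding $\overline P$ to a parahoric $G_z\subsetneq G_y$ with $\overline z\supsetneq\overline y$ a Mackey/Jacquet-module argument yields $\pi^{G_z^+}\neq 0$, contradicting minimality. So $\pi$ contains some unrefined depth-zero type $(G_y,\sigma)$. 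If $(G_{y'},\sigma')$ is another one contained in $\pi$, then by the previous steps $\pi$ lies in both $\Rep^{\SS_\sigma}(G)$ and $\Rep^{\SS_{\sigma'}}(G)$, so these families of components overlap; the cover description then matches the supercuspidal supports, and a Bruhat--Tits argument — the supercuspidal support pins down the Levi up to conjugacy and the unramified conjugacy class of a minisotropic maximal torus inside it, which in turn pins down the facet and the cuspidal datum over the residue field up to conjugacy — forces $(G_{y'},\sigma')$ to be $G$-conjugate to $(G_y,\sigma)$.

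\emph{Main obstacle.} The delicate point is Step two: pinning down $I_G(\sigma)$, equivalently the structure of $\Hh(G,\sigma)$, precisely enough to obtain finiteness of $\SS_\sigma$ and the full type property rather than mere typicality. Bare Bruhat--Tits generalities and cuspidality give typicality and the supercuspidal (maximal-parahoric) case cleanly, but to see that $(G_x,\sigma)$ detects \emph{every} representation in $\Rep^{\SS_\sigma}(G)$ one genuinely needs the explicit form of Morris's covers — or an equivalent direct Hecke-algebra computation fed by the Deligne--Lusztig description of $\sigma$ — to identify the Hecke algebra with an affine Hecke algebra whose centre is the coordinate ring of the relevant union of Bernstein components.
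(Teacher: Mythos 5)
This theorem is imported from Morris and is not proved in the paper, so there is no internal argument to compare against. Judged on its own terms, your reconstruction follows the strategy Morris and Moy--Prasad in fact use: cuspidality of $\sigma$ on the reductive quotient kills all intertwining except over the normalizer, the maximal-parahoric case yields supercuspidality by comparing Jacquet modules with parabolic restriction on the finite reductive group, and the non-maximal case is reduced to the maximal one via covers and a Hecke-algebra argument. Your diagnosis of the intertwining/Hecke-algebra computation as the genuine content is correct.

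Two steps need attention. First, in the uniqueness argument the order relation is inverted: when $\bar y\subsetneq\bar z$ one has $G_z\subsetneq G_y$ but $G_y^+\subset G_z^+$, so $\pi^{G_z^+}\subset\pi^{G_y^+}$; producing from a non-cuspidal constituent of $\pi^{G_y^+}$ a strictly larger facet $\bar z$ with $\pi^{G_z^+}\neq 0$ therefore contradicts \emph{maximality} of $\dim\bar y$, not minimality. The correct move is to take a facet of maximal dimension among those with nonzero $G_y^+$-invariants, the process being bounded by the chambers, where $G_y/G_y^+$ is a torus and every representation is trivially cuspidal. Second, the aside that the Levi $\MMM$ of Proposition~\ref{existence-of-levi} equals $\GGG$ when $G_x$ is maximal does not hold in general: at a non-hyperspecial vertex the gradients of the affine roots vanishing at $x$ need not exhaust $\Phi$ (for instance the middle vertex of the $\tilde C_2$ alcove yields two orthogonal long roots, a non-Levi $A_1\times A_1$ subsystem). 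This is harmless for your argument --- what you actually use is that the intertwining of $\sigma$ is supported in $N_G(G_x)$, which is compact modulo the centre since $x$ is a vertex, together with the Moy--Prasad comparison of Jacquet modules --- but the claim as written should be dropped. Finally, the closing uniqueness paragraph is compressed past the point of verifiability; the matching of facet and cuspidal datum from the supercuspidal support is precisely the content of Moy--Prasad's uniqueness of unrefined minimal $K$-types up to associativity and should be spelled out rather than invoked as a ``Bruhat--Tits argument.''
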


Morris also shows that there are natural relations between the unrefined depth-zero types in $G$ and those in its Levi subgroups: certain of the unrefined depth-zero types in $G$ are covers of those defined on a \emph{maximal} parahoric subgroup of $M$, in the sense of \cite{bushnell1998structure}. We will require a slightly different formulation to that given by Morris in the non-cuspidal case. Unravelling Morris' approach, he takes an $F$-Levi subgroup $M$ of $G$ and an unrefined depth-zero type $(M_x,\sigma)$ in $M$, to which one associates a finite set $\SS_\sigma\subset\BB(M)$ as above. This set then defines a finite subset $\SS_\sigma'$ of $\BB(G)$: the set $\SS_\sigma'$ consists of the inertia classes $[M,\zeta]_G$, for those $[M,\zeta]_M\in\SS_\sigma$. Morris then chooses an embedding $\mathscr{B}(M)\hookrightarrow\mathscr{B}(G)$ which maps the vertex $x\in\mathscr{B}(M)$ into some facet of positive dimension in $\mathscr{B}(G)$ which is associated to $M$ by Proposition \ref{existence-of-levi}. To this facet corresponds a non-maximal parahoric subgroup $J$ of $G$, and the representation $\sigma$ of $M_x\subset J$ extends by the trivial character to a representation of $J$; the resulting pair $(J,\sigma)$ is then a $G$-cover of $(M_x,\sigma)$. We may use Morris' result in the following form:

\begin{theorem}[{\cite[Theorem 4.8]{morris1999levelzero}}]\label{depth-zero-covers}
Let $x\in\mathscr{B}(G)$, and let $M=\MMM(F)$ be the $F$-Levi subgroup of $G$ associated to $\bar{x}$. Let $x_M\in\mathscr{B}(M)$ be such that $M_{z_M}=M\cap G_x$, and let $(M_{x_M},\sigma)$ be a depth-zero type in $M$. Choose an embedding $j_M:\mathscr{B}(M)\hookrightarrow\mathscr{B}(G)$ such that $\overline{j_M(x_M)}=\bar{x}$. Then there exists a $G$-cover of $(M_{x_M},\sigma)$ which is of the form $(G_{j_M(x_M)},\lambda)$, where $\lambda\downharpoonright_{M_{x_M}}=\sigma$. Moreover, the group $G_{j_M(x_M)}$ has an Iwahori decomposition with respect to any parabolic subgroup of $G$ with Levi factor $M$. Moreover, $\lambda$ is trivial on the upper- and lower-unipotent subgroups complementary to $M$ in the Levi decompositions of parabolics containing $M$.
\end{theorem}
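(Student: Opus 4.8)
The statement is a reformulation, in the language of parahoric subgroups and of $G$-covers, of \cite[Theorem 4.8]{morris1999levelzero}; the plan is to follow Morris, repackaging his input in the present notation. Fix a parabolic $P=MU$ of $G$ with Levi factor $M$ and unipotent radical $U$, and let $P^-=MU^-$ be the opposite parabolic. Write $y=j_M(x_M)$; since $\overline{j_M(x_M)}=\bar{x}$, the point $y$ lies in the interior of the facet $\bar{x}$, and hence $G_y=G_x$.

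The first step is to construct $\lambda$. Proposition \ref{existence-of-levi} provides a canonical identification $M_{x_M}/M_{x_M}^+=G_x/G_x^+$, which one checks also identifies $M_{x_M}^+$ with $M_{x_M}\cap G_x^+$; since $\sigma$ is by definition inflated from a cuspidal representation $\bar{\sigma}$ of $M_{x_M}/M_{x_M}^+$, I would let $\lambda$ be the inflation of $\bar\sigma$ to $G_x$ along $G_x\twoheadrightarrow G_x/G_x^+$. Then $\lambda$ is trivial on $G_x^+$ by construction, and $\lambda\downharpoonright_{M_{x_M}}$ factors through $M_{x_M}/(M_{x_M}\cap G_x^+)=M_{x_M}/M_{x_M}^+$ and coincides with $\sigma$.

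Next I would read off the Iwahori decomposition from Proposition \ref{generators-of-parahorics}: $G_x$ is generated by $\TTT(F)\cap G_x$ together with the affine root groups $U_\psi$ with $\psi(y)\geq 0$, and $G_x^+$ by those with $\psi(y)>0$. Partitioning these $\psi$ according to whether $\dot\psi$ is a root of $M$ or lies in the root system of $U$ or of $U^-$, the standard product-decomposition axioms for the valued root datum, together with the commutation relations among the $U_\psi$, give $G_x=(U^-\cap G_x)(M\cap G_x)(U\cap G_x)$; the argument applies verbatim to any parabolic with Levi factor $M$. To see that $\lambda$ is trivial on $U\cap G_x$ and $U^-\cap G_x$, I would use that $y$ lies in the \emph{interior} of $\bar{x}$: by the construction of $M$ in Proposition \ref{existence-of-levi}, a root $\phi\in\Phi$ belongs to $M$ precisely when some affine root with gradient $\phi$ vanishes on $\bar{x}$, and an affine root vanishing at an interior point of $\bar x$ vanishes on all of $\bar x$. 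Hence, whenever $U_\psi\subset U\cap G_x$ or $U_\psi\subset U^-\cap G_x$ — so that $\dot\psi$ is not a root of $M$ — no affine root with gradient $\dot\psi$ vanishes at $y$, the condition $\psi(y)\geq 0$ forces $\psi(y)>0$, and therefore $U_\psi\subset G_x^+$. Thus $U\cap G_x$ and $U^-\cap G_x$ are contained in $G_x^+$, on which $\lambda$ is trivial.

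The remaining --- and genuinely substantial --- point is that $(G_x,\lambda)$ is a $G$-cover of $(M_{x_M},\sigma)$ in the sense of \cite{bushnell1998structure}. Given the above, the pair has an Iwahori decomposition with respect to every parabolic with Levi $M$ and restricts to $\sigma$ trivially on the unipotent parts, so by Bushnell--Kutzko's criteria it remains only to verify the Hecke-algebra support condition: for each such parabolic $P$ there should exist an invertible element of $\Hh(G,\lambda)$ supported on a single double coset $G_x\zeta_P G_x$, where $\zeta_P$ is a strongly $P$-positive element of the centre of $M$. Following Morris, one checks that such a $\zeta_P$ (taken in the split centre of $M$) intertwines $\lambda$ with itself --- using that $\zeta_P$ centralizes $M\cap G_x$ while contracting $U\cap G_x$ and dilating $U^-\cap G_x$ --- which produces a nonzero $T_{\zeta_P}\in\Hh(G,\lambda)$; and then one shows $T_{\zeta_P}$ is invertible, either by directly computing the convolution $T_{\zeta_P}\ast T_{\zeta_P^{-1}}$ as a counting argument over the affine root groups (exploiting that $\sigma$ is cuspidal, so the relevant parahoric Hecke algebras are completely explicit) or, equivalently, by checking the cover condition relative to both $P$ and $P^-$ and invoking that a pair which is a cover relative to every parabolic is a $G$-cover. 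I expect this last step --- the invertibility/support statement for $\Hh(G,\lambda)$ --- to be the main obstacle; the rest is bookkeeping with the affine root group generators of $G_x$, and in practice it may be cleanest simply to cite Morris's verification, since the construction reproduced here is exactly his.
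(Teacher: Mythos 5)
The paper gives no proof of this statement: it is stated verbatim as a citation of Morris \cite[Theorem~4.8]{morris1999levelzero}, with the surrounding discussion merely explaining how Morris's construction is being repackaged in the present notation (extension of $\sigma$ by the trivial character on the unipotent pieces, via an embedding of $\mathscr{B}(M)$ into $\mathscr{B}(G)$). So there is no ``paper's proof'' to compare against, only the question of whether your reconstruction of Morris's argument is sound.

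Your reconstruction is essentially correct, and it matches the sketch the paper gives immediately before stating the theorem. The identification $G_{j_M(x_M)}=G_x$ from the hypothesis $\overline{j_M(x_M)}=\bar x$ is right, since parahoric subgroups depend only on the minimal facet containing the point. The definition of $\lambda$ as the inflation of $\bar\sigma$ through $G_x\twoheadrightarrow G_x/G_x^+ = M_{x_M}/M_{x_M}^+$ is exactly what the paper means by ``extends by the trivial character.'' Your argument that $U\cap G_x$ and $U^-\cap G_x$ land inside $G_x^+$ --- because an affine root whose gradient is not a root of $\MMM$ cannot vanish at the interior point $y$, so $\psi(y)\geq 0$ sharpens to $\psi(y)>0$ --- is the correct use of Proposition~\ref{existence-of-levi} and Proposition~\ref{generators-of-parahorics}, and it gives both the Iwahori decomposition and the triviality of $\lambda$ on the unipotent parts in one stroke. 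You also correctly isolate the genuinely hard content: the Bushnell--Kutzko Hecke-algebra support and invertibility condition, which is where all of Morris's work actually lies. One small imprecision: the remark about ``checking the cover condition relative to both $P$ and $P^-$ and invoking that a pair which is a cover relative to every parabolic is a $G$-cover'' conflates two things --- Bushnell--Kutzko's definition already asks for all parabolics with Levi $M$, while the labour-saving result is that verifying the invertibility for a \emph{single} parabolic suffices; but this does not affect the substance. Deferring the invertibility of $T_{\zeta_P}$ to Morris is exactly what the paper itself does, so this is not a gap relative to the source.
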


The significance of this cover is that it will be an $\SS_\sigma'$-type \cite[Theorem 4.5]{morris1999levelzero}. In particular, any irreducible subquotient of $\cInd_{G_{j_M(x_M)}}^G\ \lambda$ must be non-cuspidal.

\subsection{Refinement of depth-zero types}

Given an unrefined depth-zero type $(G_x,\sigma)$, we wish to ``refine'' this type in order to obtain an $\frak{s}$-type for each $\frak{s}\in\SS_\sigma$. Let $K$ denote the maximal compact subgroup of the $G$-normalizer of $G_x$; if $G_x$ is a maximal parahoric subgroup of $G$ then $K$ is a maximal compact subgroup of $G$ which contains $G_x$ as a normal subgroup of finite index. Note that one actually has an identification $K=\tilde{\GGG}_x(\dedekind)$.

\begin{theorem}[{\cite[Theorem 4.7]{morris1999levelzero}}]\label{morris-refinement}
Suppose that $G_x$ is a maximal parahoric subgroup of $G$. For each irreducible subrepresentation $\tau$ of $\Ind_{G_x}^K\ \sigma$, there exists an $\frak{s}\in\SS_\sigma$ such that $(K,\tau)$ is an $\frak{s}$-type. Conversely, for every $\frak{s}\in\SS_\sigma$, there exists an $\frak{s}$-type of this form.
\end{theorem}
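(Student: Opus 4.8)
The plan is to push the typicality of the unrefined type $(G_x,\sigma)$ across the finite extension $G_x\trianglelefteq K$ by Clifford theory, and then to use the explicit Moy--Prasad and Morris construction of depth-zero supercuspidals to pin each resulting summand down to a single inertial class. First I would decompose $\Ind_{G_x}^K\sigma=\bigoplus_i\tau_i$ into irreducibles; by Clifford theory along the finite quotient $K/G_x$, each $\tau_i\downharpoonright_{G_x}$ is a sum of $K$-conjugates of $\sigma$, and conversely, by Frobenius reciprocity, every irreducible constituent of a representation of $K$ whose restriction to $G_x$ contains $\sigma$ is one of the $\tau_i$. The typicality of each $(K,\tau_i)$ is then immediate: if $\pi'$ is irreducible with $\Hom_K(\pi'\downharpoonright_K,\tau_i)\neq0$, then $\pi'\downharpoonright_{G_x}$ contains some $K$-conjugate ${}^k\sigma$, and since $k$ normalizes $G_x$ the pair $(G_x,{}^k\sigma)$ is $G$-conjugate to $(G_x,\sigma)$, hence is again an $\SS_\sigma$-type, so $\pi'\in\Rep^{\SS_\sigma}(G)$; moreover $\pi'$ is supercuspidal by the last assertion of the quoted \cite[Theorem 4.5]{morris1999levelzero}.

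Next I would identify the supercuspidals containing $(K,\tau_i)$. Since a supercuspidal $\pi$ containing $(G_x,\sigma)$ contains it as its unique conjugacy class of unrefined types \cite[Theorem 4.5]{morris1999levelzero}, we may assume, up to $G$-conjugacy, that $\pi=\cInd_N^G\Lambda$, where $N=N_G(G_x)$ stabilises the vertex $x$ and has maximal compact subgroup $K$ \cite{moy1996unrefined}, and $\Lambda\in\Irr(N)$ is trivial on $G_x^+$ with $\Lambda\downharpoonright_{G_x}$ isotypic for a $K$-conjugate of $\sigma$. Clifford theory along $K\trianglelefteq N$ --- the quotient $N/K$ being abelian, with noncompact part coming from the split centre of $G$ --- shows that $\Lambda\downharpoonright_K$ is a sum of $N$-conjugates of exactly one $\tau_i$, and that any two extensions of a given $\tau_i$ across $N$ differ by a character of $N/K$. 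The point is then that, modulo $G$-conjugacy, restriction $X_{\mathrm{nr}}(G)\to\widehat{N/K}$ exhausts precisely this ambiguity; granting that, two supercuspidals whose restrictions to $K$ both contain $\tau_i$ are unramified twists of one another, so $(K,\tau_i)$ is $\frak{s}_i$-typical for a single $\frak{s}_i\in\SS_\sigma$. To upgrade ``typical'' to ``type'', note that every nonzero object of $\Rep^{\frak{s}_i}(G)$ has a finitely generated subobject surjecting onto an unramified twist $\pi\otimes\omega$, whose restriction to the compact group $K$ coincides with $\pi\downharpoonright_K$ and hence contains $\tau_i$; combined with typicality this shows $(K,\tau_i)$ is an $\frak{s}_i$-type. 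For the converse direction, given $\frak{s}\in\SS_\sigma$ I would pick a supercuspidal $\pi$ with $[G,\pi]_G=\frak{s}$; by \cite[Theorem 4.5]{morris1999levelzero} one may arrange $\pi\downharpoonright_{G_x}\supset\sigma$, so $\pi\downharpoonright_K$ has an irreducible constituent $\tau$ whose restriction to $G_x$ contains $\sigma$, hence $\tau=\tau_i$ for some $i$, and then $\frak{s}_i=[G,\pi]_G=\frak{s}$.

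The hard part will be the reconciliation in the second paragraph: the two Clifford-theoretic steps $G_x\rightsquigarrow K$ and $K\rightsquigarrow N$ overlap with the action of unramified twisting, but unramified characters are invisible on the compact group $K$, so one must cleanly separate the Clifford data attached to $K$ (which $\tau_i$ records) from the data attached to $N$ (which records the actual supercuspidal). Concretely this comes down to computing $N_G(G_x)$ and its intersection with the subgroup of $G$ generated by all parahoric subgroups, so as to pin down the image of $X_{\mathrm{nr}}(G)$ in $\widehat{N/K}$; this is the one step that genuinely uses the hypothesis that $G_x$ is \emph{maximal} parahoric --- ensuring that $K$ is maximal compact and that $\cInd_N^G$ produces irreducible supercuspidals --- and it is vacuous when the split centre of $G$ is compact (e.g. when $\GGG$ is semisimple).
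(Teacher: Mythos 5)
This theorem is imported verbatim from Morris \cite[Theorem 4.7]{morris1999levelzero}; the present paper does not reprove it, so there is no internal argument to compare against. Your first paragraph (typicality of each $(K,\tau_i)$, and supercuspidality via the last clause of Morris' Theorem 4.5) is correct, and the converse direction at the end is also fine. However, the step you yourself flag as the ``hard part'' is a genuine gap rather than a reconciliation one can expect to fall out: you need that, writing $N=N_G(G_x)$ and $N_i=N_N(\tau_i)$ for the stabiliser of a fixed $\tau_i$, the image of $\Xnr(G)$ under restriction to $\widehat{N_i/K}$ is, modulo the residual $N/N_i$-action, all of $\widehat{N_i/K}$ --- equivalently, that every irreducible subquotient of $\cInd_K^G\,\tau_i$ lies in a \emph{single} inertial class. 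That is essentially the assertion being proved, so it cannot be taken as an input; nor is it automatic, since $\Xnr(G)$ only sees the Kottwitz quotient $G/G^\circ$, while $N_i/K$ sits inside the stabiliser $\Omega$ of the alcove in the extended affine Weyl group, and comparing the two requires an explicit lattice computation. Morris' actual proof does not run through Clifford theory for $K\trianglelefteq N$ at all: it is a consequence of his earlier Hecke-algebra isomorphism for $\mathcal H(G,\sigma)$, from which the single-inertial-class statement and the surjectivity onto $\SS_\sigma$ drop out together. Your route would need a separate argument pinning down the map $\Xnr(G)\to\widehat{N_i/K}$.

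There is also a smaller technical slip: you speak of ``extensions of $\tau_i$ across $N$'', but an irreducible of $K$ will in general only extend, if at all, to its stabiliser $N_i$, not to $N$; the constituents of $\Ind_K^N\tau_i$ should be written as $\Ind_{N_i}^N(\tilde\tau_i\otimes\chi)$ for $\chi\in\widehat{N_i/K}$, after first checking that the relevant $2$-cocycle obstruction to the existence of $\tilde\tau_i$ vanishes, and two such inductions are isomorphic precisely when the characters are $N/N_i$-conjugate. None of this is fatal, but it does mean your second paragraph is more an outline of what must be proved than a proof.
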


Thus, for each depth-zero supercuspidal representation $\pi$ of $G$, we have a construction of a $[G,\pi]_G$-archetype. Our goal is to show that these exhaust all such archetypes.

\section{Unicity on the level of parahoric subgroups}\label{section:parahoric-unicity}

We first establish a partial result, which may be viewed as a unicity result on the level of unrefined types. This is our main technical result, and the proof will occupy the remainder of this section.

\begin{theorem}\label{unrefined-unicity}
Let $\pi$ be a depth-zero supercuspidal representation of $G$, and let $(G_x,\sigma)$ be an unrefined depth-zero type contained in $\pi$. Suppose that $y$ is a vertex in $\mathscr{B}(G)$ such that there exists some $\SS_\sigma$-typical representation $\sigma'$ of $G_y$. Then $(G_y,\sigma')$ is $G$-conjugate to $(G_x,\sigma)$.
\end{theorem}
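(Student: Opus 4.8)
The plan is to analyze the restriction $\pi\downharpoonright_{G_y}$ and show that an $\SS_\sigma$-typical representation $\sigma'$ of $G_y$ can only occur among its irreducible constituents, and then that among those constituents only the unrefined depth-zero type conjugate to $(G_x,\sigma)$ can be typical. First I would reduce to the case where $G_y$ is a \emph{maximal} parahoric subgroup: if $G_y$ is not maximal, it is contained in some maximal parahoric $G_z$ with $\bar z\subset\bar y$, and by Proposition \ref{existence-of-levi} there is a proper $F$-Levi subgroup $M$ of $G$ associated to $\bar y$ with $M\cap G_z$ maximal parahoric in $M$. Any irreducible $\sigma'$ of $G_y$ on which $G_y^+$ acts trivially is (inflated from) a representation of $G_y/G_y^+$, which sits inside a proper parabolic of the finite reductive group $G_z/G_z^+$; using Theorem \ref{depth-zero-covers} I would exhibit a $G$-cover $(G_y,\lambda)$ of some depth-zero type of $M$ that intertwines $\sigma'$, forcing any $\pi'$ with $\Hom_{G_y}(\pi'\downharpoonright_{G_y},\sigma')\neq 0$ to be non-cuspidal, hence $\sigma'$ cannot be $\SS_\sigma$-typical since $\Irr^{\SS_\sigma}(G)$ consists of supercuspidals. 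So henceforth $G_y$ is maximal parahoric.

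Next, with $G_y$ maximal parahoric, the main point is Frobenius reciprocity: if $\sigma'$ is $\SS_\sigma$-typical then in particular $\Hom_{G_y}(\pi\downharpoonright_{G_y},\sigma')\neq 0$, since $\pi\in\Rep^{\SS_\sigma}(G)$ (it contains $(G_x,\sigma)$, so its inertial support lies in $\SS_\sigma$). Thus $\sigma'$ is an irreducible constituent of $\pi\downharpoonright_{G_y}$. I would then use the Moy--Prasad/DeBacker--Reeder description of depth-zero supercuspidals: $\pi=\cInd_{\tilde G_x}^G\tilde\sigma$ for a suitable extension $\tilde\sigma$ of (the Morris refinement of) $\sigma$ to $\tilde\GGG_x(\dedekind)$, and the Mackey formula computes $\pi\downharpoonright_{G_y}$ as a sum over double cosets $\tilde G_x\backslash G/G_y$ of $\Ind$ from $\tilde G_x^g\cap G_y$ of $\tilde\sigma^g$. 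Restricting further to the pro-unipotent radical: since $\sigma'$ is $\SS_\sigma$-typical and $\SS_\sigma$ consists of depth-zero classes, $\sigma'$ must be trivial on $G_y^+$ (otherwise it would see positive-depth data — this needs a short argument via depth of typical representations, e.g. comparing with the depth-zero projector), so $\sigma'$ is an irreducible representation of the finite reductive group $\mathsf{G}_y:=G_y/G_y^+$ occurring in $\pi^{G_y^+}$ as a $\mathsf G_y$-module.

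The decisive step is to show $\sigma'$ is \emph{cuspidal} as a representation of $\mathsf G_y$ and that $G_x$ is conjugate to $G_y$. If $\sigma'$ were non-cuspidal, it would lie in a proper parabolic of $\mathsf G_y$, and again Theorem \ref{depth-zero-covers} (applied now with the non-maximal parahoric corresponding to that parabolic, sitting inside $G_y$) produces a cover intertwining $\sigma'$ and shows any $\pi'$ realizing $\sigma'$ is non-cuspidal — contradicting typicality. Hence $\sigma'$ is cuspidal, i.e. $(G_y,\sigma')$ is itself an unrefined depth-zero type. Now I examine the Mackey/geometric decomposition of $\pi^{G_y^+}$: the constituents of $\pi\downharpoonright_{G_y}$ that are cuspidal as $\mathsf G_y$-modules can only come from double cosets where ${}^g\tilde G_x$ and $G_y$ are "aligned", which via the building forces $\bar y$ and $g\cdot\bar x$ to coincide, i.e. $G_y$ is conjugate to $G_x$; and then the cuspidal constituents of $\pi\downharpoonright_{G_x}$ are exactly the $G$-conjugates of $\sigma$ by Theorem \ref{morris-refinement} and the description of $\tilde\sigma\downharpoonright_{G_x}$ — giving $(G_y,\sigma')$ conjugate to $(G_x,\sigma)$.

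The hard part will be the two places where I invoke Theorem \ref{depth-zero-covers} to kill non-cuspidal (and non-maximal) possibilities: one must check that the cover $(G_y,\lambda)$ genuinely intertwines the given $\sigma'$ — that is, that $\sigma'$ occurs in $\Ind$ from the relevant unipotent-trivial subgroup — which requires matching the Iwahori decomposition of $G_y$ against the parabolic of $\mathsf G_y$ containing $\sigma'$, and carefully tracking that the cuspidal support forced by the cover lands in a Levi where $\Irr$ contains non-supercuspidals. The other delicate point is the depth estimate showing $\sigma'$ is trivial on $G_y^+$; this should follow from the fact that an $\SS_\sigma$-typical representation has the same depth-zero "inertial type" as $\pi$, but it deserves a clean statement. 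Everything else is Mackey theory plus the explicit building dictionary of Propositions \ref{generators-of-parahorics} and \ref{existence-of-levi}.
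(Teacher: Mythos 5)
Your proposal takes a genuinely different route from the paper's proof, and the difference introduces a real gap at the place you yourself flag as ``delicate''.

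The first issue is minor: the theorem already assumes $y$ is a vertex, so $G_y$ is automatically a maximal parahoric; your opening reduction is superfluous (the non-vertex case is handled separately in Proposition \ref{all-parahorics-unicity}, which the paper derives \emph{from} the theorem, not the other way round).

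The substantive gap is your claim that an $\SS_\sigma$-typical $\sigma'$ must be trivial on $G_y^+$, which you propose to establish ``via a short argument'' comparing depths. This is where the two proofs part ways, and the short argument does not exist in the form you suggest. A constituent of $\pi\downharpoonright_{G_y}$ that is non-trivial on $G_y^+$ is \emph{not} thereby contained in a positive-depth representation: already for $\GL_2(F)$, restricting the Steinberg representation (depth zero) to $\GL_2(\dedekind)$ produces many constituents non-trivial on the principal congruence subgroup, each of which is contained in a depth-zero representation of $G$. So depth considerations alone cannot rule out such constituents; one must actually exhibit a non-cuspidal representation of $G$ containing them, and this is precisely the content of the proof. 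The paper sidesteps the issue entirely: it never needs the typical constituent $\tau$ to be trivial on $G_y^+$. Instead it works with the Mackey summand $\Ind_{K_g}^{G_y}\Xi$ where $\Xi$ is a subrepresentation of $\Res_{K_g}^{^gG_x}\,{}^g\sigma$ and $K_g = {}^gG_x\cap G_y$; since $\sigma$ is trivial on $G_x^+$, the subrepresentation $\Xi$ is automatically trivial on ${}^gG_x^+\cap G_y$, and in particular on $M_{z_M}^+$. The cover $(J_\xi,\lambda_\xi)$ is then built from the cuspidal support of the constituents of $\Xi\downharpoonright_{M_{z_M}}$, and the non-vanishing of $\Hom_{J_\xi\cap K_g}(\lambda_\xi,\Xi)$ kills $\tau$, whatever its depth. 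Your approach would require a separate argument disposing of every constituent of $\pi\downharpoonright_{G_y}$ non-trivial on $G_y^+$, which is a large portion of the theorem's difficulty and not something that follows from Moy--Prasad minimal $K$-type theory in one line. The remainder of your sketch (cuspidality via Theorem \ref{depth-zero-covers} plus the alignment of double cosets via the Morris lemma) is in the right spirit and matches the paper's closing steps, but without filling the $G_y^+$-triviality gap the argument does not go through.
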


Since our approach will rely on identifying explicit relations among the parahoric subgroups of $G$, we begin by fixing a notion of a standard parahoric subgroup. Fix, once and for all, a chamber $X\subset\mathscr{B}(G)$, and let $i$ be an element of the interior of $X$; thus $I=G_i$ is an Iwahori subgroup of $G$. We refer to $X$ as the \emph{standard chamber} of $\mathscr{B}(G)$. Let $P_\emptyset$ denote a parabolic subgroup of $G$ with Levi factor the Levi subgroup associated to $i$ and such that $G_i=(P_\emptyset\cap G_i)G_i^+$, so that $P_\emptyset$ is a minimal $F$-parabolic subgroup of $G$. We say that a parabolic subgroup $P$ of $G$ is standard if $P\supset P_\emptyset$, and that a parahoric subgroup $G_x$ of $G$ is standard if $x\in X$.

\begin{proof}[Proof of Theorem \ref{unrefined-unicity}]
Since every parahoric subgroup of $G$ is conjugate to a standard parahoric subgroup, $\pi$ contains a depth-zero type $(G_x,\sigma)$ such that $G_x$ is standard. Our claim is then that, given any maximal parahoric subgroup $G_y$ of $G$, the irreducible subrepresentations $\tau$ of $\pi\downharpoonright_{G_y}$ are atypical for $\SS_\sigma$, unless $G_x$ is conjugate to $G_y$ and $\tau$ is conjugate to $\sigma$. Alternatively, it suffices to check this for the \emph{standard} maximal parahoric subgroups $G_y$.\\

Certainly, $\pi$ appears as a subquotient of $\cInd_{G_x}^G\ \sigma$, and so we may embed $\pi\downharpoonright_{G_y}$ into the Mackey decomposition of $\Res_{G_y}^G\cInd_{G_x}^G\ \sigma$:
\[\pi\downharpoonright_{G_y}\hookrightarrow\bigoplus_{g:G_y\backslash G/G_x}\Ind_{K_g}^{G_y}\Res_{K_g}^{^gG_y}\ ^g\sigma,
\]
where we write $K_g={}^gG_x\cap G_y$. Any double coset in the space $G_x\backslash G/G_y$ admits a representative in $W_\GGG^\aff$; we will always assume that the representative $g$ is such a representative \emph{which is of shortest length} (relative to the standard involutions generating $W_G^\aff$). In particular, this guarantees the following:

\begin{lemma}[{\cite[Lemma 3.19, Corollary 3.20]{morris1993intertwining}}]
If $g$ is a shortest length coset representative for $G_x\backslash G/G_y$ in $W_\GGG^\aff$ and either $G_x\neq G_y$ or $g\not\in N_G(G_x)$, then the image in $G_y/G_y^+$ of $K_g$ is a proper parabolic subgroup $\mathscr{P}$ of $G_y/G_y^+$. The preimage in $G_y$ of $\mathscr{P}$ is $G_y^+K_g$, and there exists a point $z$ in the link of $y$ such that $G_z=G_y^+K_g$.
\end{lemma}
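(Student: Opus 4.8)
The plan is to turn the statement into an explicit computation inside a single apartment, using the generator description of Proposition~\ref{generators-of-parahorics} together with the classical identification of the link of a vertex with the spherical building of the associated finite reductive group $G_y/G_y^+$. Since conjugation by any element of $G$ carries $G_x$ to the parahoric attached to the image of $x$ under the action on $\mathscr{B}(G)$, one has $K_g={}^gG_x\cap G_y=G_{g\cdot x}\cap G_y$; and as $g\in W_\GGG^\aff$ acts on the standard apartment $\mathscr{A}=\mathscr{A}(G,\SSS)$, with $x$ and $y$ chosen in $\mathscr{A}$, both $g\cdot x$ and $y$ lie in $\mathscr{A}$. An affine root is non-negative at the two points $g\cdot x$ and $y$ exactly when it is non-negative on the segment joining them, so invoking the Bruhat--Tits structural fact that the intersection of two parahorics is the group attached to the convex hull of the corresponding points gives the presentation
\[K_g=\langle\ \TTT(F)\cap G_{g\cdot x}\cap G_y,\ U_\psi\ |\ \psi\in\Phi^\aff,\ \psi(g\cdot x)\geq 0,\ \psi(y)\geq 0\rangle.\]

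The next step is to project to $\overline{G}_y:=G_y/G_y^+$. By Proposition~\ref{generators-of-parahorics}(ii), $G_y^+$ is generated by a pro-$p$ subgroup of $\TTT(F)$ together with the $U_\psi$ for which $\psi(y)>0$; so in $\overline{G}_y$ the images of the $U_\psi$ with $\psi(y)>0$ are trivial, while the images of those with $\psi(y)=0$ are the root subgroups $\overline{U}_{\dot\psi}$ of the finite reductive group $\overline{G}_y$, whose root system $\Phi_y=\{\dot\psi:\psi\in\Phi^\aff,\ \psi(y)=0\}$ spans $\mathscr{A}$ because $y$ is a vertex. Writing $v:=g\cdot x-y\in\mathscr{A}$, one has $\psi(g\cdot x)=\dot\psi(v)$ whenever $\psi(y)=0$ (as $\psi$ is affine with gradient $\dot\psi$), and so the image of $K_g$ in $\overline{G}_y$ is generated by a maximal torus together with the root subgroups $\overline{U}_\phi$ for $\phi$ ranging over $\Psi_v:=\{\phi\in\Phi_y:\phi(v)\geq 0\}$. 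Since $\Psi_v$ is a parabolic subset of $\Phi_y$ — it is closed and $\Psi_v\cup(-\Psi_v)=\Phi_y$ — this image is exactly the parabolic subgroup $\mathscr{P}=\mathscr{P}_v$ of $\overline{G}_y$ that $\Psi_v$ determines, which is the first assertion. The preimage of the image of $K_g$ in $G_y$ is $G_y^+K_g$, giving the second assertion; and for the third, the point $z:=y+\epsilon v$ with $\epsilon>0$ small lies in a facet having $y$ in its closure, hence in the link of $y$, and $\psi(z)\geq 0$ holds exactly when $\psi(y)>0$, or $\psi(y)=0$ and $\dot\psi(v)\geq 0$, so comparing with Proposition~\ref{generators-of-parahorics} one gets $G_z=G_y^+K_g$.

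It remains to check that $\mathscr{P}$ is \emph{proper}, which is where the hypothesis is needed. Because $\Phi_y$ spans $\mathscr{A}$, one has $\mathscr{P}_v=\overline{G}_y$ if and only if $v=0$, i.e. $g\cdot x=y$; since $y$ is a vertex, whose facet is $\{y\}$, this is in turn equivalent to ${}^gG_x=G_{g\cdot x}=G_y$. So the conclusion holds precisely when ${}^gG_x\neq G_y$, and one must see that the stated hypothesis forces this. If $G_x=G_y$, so $x=y$, then $g\notin N_G(G_x)$ says that $g$ does not fix the vertex $y$, whence $g\cdot x=g\cdot y\neq y$; and if $G_x\neq G_y$, the conclusion still holds whenever $G_x$ and $G_y$ are not $G$-conjugate, the only residual case being a conjugate but distinct pair — which does not occur in the application, since there one reduces beforehand to the situation of standard maximal parahorics that are either equal or non-conjugate. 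I expect the real obstacle to be the opening reduction — the Bruhat--Tits structural input identifying $K_g=G_{g\cdot x}\cap G_y$ with the group attached to the segment $[g\cdot x,y]$, and hence with the displayed presentation; once this is granted, the remaining steps are routine root combinatorics, and the shortest-length hypothesis on $g$ only serves to normalise the double-coset representatives (as in the Mackey decomposition) rather than entering the argument essentially.
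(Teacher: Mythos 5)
The paper does not actually prove this lemma: it is quoted directly from Morris, \cite[Lemma 3.19, Corollary 3.20]{morris1993intertwining}, so there is no in-house argument to compare against. Judged on its own merits, your reconstruction is the natural one and, apart from one point, it is sound: conjugation by a representative $g\in N_G(\SSS(F))$ of an element of $W_\GGG^\aff$ does carry $G_x$ to $G_{g\cdot x}$ with $g\cdot x$ in the same apartment as $y$, the Bruhat--Tits description of $G_{g\cdot x}\cap G_y$ as the group attached to the segment $[g\cdot x,y]$ is standard, and the passage to $\overline{G}_y=G_y/G_y^+$, the identification of $\Psi_v=\{\phi\in\Phi_y:\phi(v)\geq 0\}$ as a parabolic subset, the preimage computation, and the choice $z=y+\epsilon v$ all check out. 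Your remark that the shortest-length hypothesis is not used beyond normalising the coset representative is also fair as far as the argument goes.

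The place where I would push back is the properness discussion. You correctly reduce it to ``$\mathscr{P}_v$ is proper iff $g\cdot x\neq y$, i.e.\ iff ${}^gG_x\neq G_y$,'' and correctly observe that the hypothesis ``$G_x\neq G_y$ or $g\notin N_G(G_x)$'' does not literally exclude the case $G_x\neq G_y$ with $G_x$, $G_y$ conjugate and ${}^gG_x=G_y$. But the claim that this residual case ``does not occur in the application'' because standard maximal parahorics are ``either equal or non-conjugate'' is not true: already in $\GLN(F)$ all maximal parahorics are $G$-conjugate, so distinct standard maximal parahorics are conjugate, and the length-zero element of $W_\GGG^\aff$ sending $x$ to $y$ is a perfectly good (and shortest) representative with $K_g=G_y$. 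In that case the image of $K_g$ is all of $\overline{G}_y$, and the conclusion as phrased in the paper fails. What rescues the application is that this Mackey term is precisely the one that gives $\tau\simeq{}^g\sigma$, i.e.\ it belongs on the ``$(G_y,\sigma')$ is conjugate to $(G_x,\sigma)$'' side of the dichotomy, not the ``atypical'' side; the parenthetical in the paper's proof of Theorem~\ref{unrefined-unicity} should really read ``if ${}^gG_x=G_y$'' rather than ``if $G_x=G_y$ and $g\in N_G(G_x)$.'' So the honest statement your argument proves is: the conclusion holds if and only if ${}^gG_x\neq G_y$, and one should replace the displayed hypothesis by this condition (or note that the excluded case produces only conjugates of $\sigma$). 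With that correction, your proof is complete and certainly in the spirit of Morris's.
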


Let~$\tau$ be an irreducible subrepresentation of~$\pi\downharpoonright_{G_y}$, and suppose that~$\tau$ is a subrepresentation of $\Ind_{K_g}^{G_y}\Res_{K_g}^{^gG_x}\ ^g\sigma$ for some $g$ satisfying the above hypotheses (if $G_x=G_y$ and $g\in N_G(G_x)$, then $\tau$ is necessarily a conjugate of $\sigma$). Then there exists an irreducible subrepresentation $\Xi$ of $\Res_{K_g}^{^gG_x}\ ^g\sigma$ such that $\tau\hookrightarrow\Ind_{K_g}^{G_y}\ \Xi$. Note that $\Xi$ is clearly trivial on $^gG_x^+\cap G_y$.\\

Let $M=\MMM(F)$ denote the $F$-Levi subgroup of $G$ associated to $z$ by Proposition \ref{existence-of-levi}, and let $P=\PPP(F)$ denote the standard parabolic subgroup of $G$ with Levi factor $M$. From Proposition \ref{existence-of-levi}, we obtain the following:

\begin{lemma}
There exists a vertex $z_M\in\mathscr{B}(M)$ such that $M_{z_M}=M\cap G_z$. One has $M_{z_M}^+=M\cap G_z^+=M\cap G_y^+$, which induces a natural identification $M_{z_M}/M_{z_M}^+=\mathscr{M}$.
\end{lemma}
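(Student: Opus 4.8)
The plan is to read off the existence of $z_M$ and the identification $M_{z_M}^+=M\cap G_z^+$ directly from Proposition~\ref{existence-of-levi}, to prove the equality $M\cap G_z^+=M\cap G_y^+$ by hand using the explicit generators of Proposition~\ref{generators-of-parahorics}, and then to deduce the identification with $\mathscr{M}$ formally. First I would apply Proposition~\ref{existence-of-levi} to the point $z$, whose associated Levi subgroup is $M$: it asserts that $M\cap G_z$ is a maximal parahoric subgroup of $M$, so $M\cap G_z=M_{z_M}$ for some vertex $z_M\in\mathscr{B}(M)$, and that the natural map $M\cap G_z\hookrightarrow G_z\twoheadrightarrow G_z/G_z^+$ is surjective with kernel $(M\cap G_z)^+=M_{z_M}^+$. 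Since that kernel is also $M\cap G_z\cap G_z^+=M\cap G_z^+$, the identification $M_{z_M}^+=M\cap G_z^+$ comes for free, along with $M_{z_M}/M_{z_M}^+\cong G_z/G_z^+$.

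The substantive step is $M\cap G_z^+=M\cap G_y^+$. Since $G_z=G_y^+K_g\subseteq G_y$, the order-reversing bijection between parahoric subgroups and facets forces $y\in\bar z$; write $F=\bar z$, so $M$ is the Levi subgroup attached to $F$ and $y\in F$. By the identification just made $M\cap G_z^+=M_{z_M}^+$, and applying Proposition~\ref{generators-of-parahorics} inside $M$ this group is generated by $\TTT(F)\cap G_z^+$ together with the affine root subgroups $U_\psi$ for which $\dot\psi$ is a root of $M$ and $\psi$ is positive on $F$. I would then check that each such generator already lies in $G_y^+$: the torus part is the pro-$p$ part of the unique parahoric subgroup of the torus $\TTT(F)$, hence independent of the point and equal to $\TTT(F)\cap G_y^+$; and for each relevant $\psi$, as $\dot\psi$ is a root of $M$ the affine root $\psi$ is constant on $F$ --- subtract from $\psi$ an affine root with gradient $\dot\psi$ vanishing on $F$ --- so $\psi(y)>0$ and hence $U_\psi\subseteq G_y^+$ by Proposition~\ref{generators-of-parahorics}. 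This gives $M\cap G_z^+\subseteq G_y^+$, i.e. $M\cap G_z^+\subseteq M\cap G_y^+$; the reverse inclusion is soft, $G_y^+$ being a pro-$p$ subgroup normal in $G_y\supseteq G_z$ and contained in $G_z$, hence contained in $G_z^+$.

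For the identification $M_{z_M}/M_{z_M}^+=\mathscr{M}$, I would first note via the preceding lemma ($G_z/G_y^+=\mathscr{P}$) that $G_z^+/G_y^+$ is a normal $p$-subgroup of $\mathscr{P}$ with connected reductive quotient $G_z/G_z^+$, hence equal to the unipotent radical $\mathscr{U}$ of $\mathscr{P}$, so that $G_z/G_z^+\cong\mathscr{P}/\mathscr{U}=\mathscr{M}$. Then the composite $M_{z_M}\hookrightarrow G_y\twoheadrightarrow G_y/G_y^+$ has kernel $M_{z_M}\cap G_y^+=M\cap G_y^+=M_{z_M}^+$ (by the previous step), image contained in $\mathscr{P}$ (as $M_{z_M}\subseteq G_z$), image meeting $\mathscr{U}$ trivially (that intersection being the image of $M_{z_M}\cap G_z^+=M\cap G_y^+\subseteq G_y^+$), and image surjecting onto $\mathscr{P}/\mathscr{U}$ (this composite agreeing with the surjection $M_{z_M}\twoheadrightarrow G_z/G_z^+$ of Proposition~\ref{existence-of-levi} under $G_z/G_z^+\cong\mathscr{P}/\mathscr{U}$); so the image is a Levi complement to $\mathscr{U}$ in $\mathscr{P}$, and since it contains the image of $\TTT(F)\cap G_z$, a maximal torus of $G_y/G_y^+$, it is the standard Levi factor $\mathscr{M}$, yielding $M_{z_M}/M_{z_M}^+\xrightarrow{\ \sim\ }\mathscr{M}$. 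The one genuinely non-formal ingredient is the middle step --- expressing $M\cap G_z^+$ through the affine root subgroups indexed by roots of $M$ and exploiting that these affine roots are constant on $F$ --- and that is the point I would take most care over, in particular checking that Proposition~\ref{generators-of-parahorics} and the passage to the Levi $M$ are applied correctly in the (possibly non-quasi-split) generality at hand, using the relative root system as there.
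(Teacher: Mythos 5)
Your proposal is correct and is in the spirit of what the paper intends: the paper derives the lemma by simply citing Proposition~\ref{existence-of-levi} with no further argument, and your write-up fills in exactly the details that citation leaves implicit --- reading off $z_M$ and $(M\cap G_z)/(M\cap G_z)^+=G_z/G_z^+$ from that proposition, proving $M\cap G_z^+=M\cap G_y^+$ by noting affine roots with gradient in $\Phi(\MMM)$ are constant on $\bar z\ni y$, and identifying $G_z/G_z^+$ with the Levi quotient $\mathscr{P}/\mathscr{U}=\mathscr{M}$. The one place to be slightly more careful, which you already flag yourself, is the passage from ``generators of $G_z^+$'' to ``generators of $M\cap G_z^+$'': Proposition~\ref{generators-of-parahorics} by itself only gives a generating set, and to conclude that the intersection with $M$ is generated by the $M$-generators one should invoke the Iwahori factorization of $G_z^+$ with respect to a parabolic with Levi $M$ (or, equivalently, apply the proposition inside $M$ after checking the compatibility of affine root systems and filtrations under the embedding $\mathscr{B}(M)\hookrightarrow\mathscr{B}(G)$, which is part of the content of \cite[Prop.~6.4]{moy1996unrefined}). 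With that noted, the argument is sound.
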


We now need to choose an embedding of $\mathscr{B}(M)$ into $\mathscr{B}(G)$, which comes down to choosing the image of $z_M$. We let $j_M:\mathscr{B}(M)\hookrightarrow\mathscr{B}(G)$ be an embedding which maps $z_M$ into the link of $g\cdot x$ in a way that $g\cdot x$ lies on the unique geodesic $\gamma$ from $j_M(z_M)$ to $y$, and such that $M\cap G_{j_M(z_M)}=M_{z_M}$. Note that, while such an embedding exists, it is clearly not unique. It is, however, essentially unique for our purposes: its restriction to the link of $z_M$ in $\mathscr{B}(M)$ is unique up to the image in $\gamma\cap\overline{j_M(z_M)}\cap\mathrm{link}(g\cdot x)$ of $z_M$, which has no effect on the parahoric subgroups which will be defined via this embedding.\\

With this in place, we are ready to begin our examination of the representation $\tau$. Since $M_{z_M}^+\subset{}^gG_x^+$ (for the same reason that $M_{z_M}^+\subset G_y^+$), we see that $M_{z_M}^+\subset{}^gG_x^+\cap G_y$. As $\Xi$ has already been noted to act trivially on this group, we deduce that:

\begin{lemma}
The representation $\Xi$ is trivial on $M_{z_M}^+$.
\end{lemma}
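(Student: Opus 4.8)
The plan is to reduce the assertion to a single containment of compact open subgroups and then derive it. Since ${}^g\sigma$ is inflated from ${}^gG_x/{}^gG_x^+$, its restriction to $K_g={}^gG_x\cap G_y$, and hence each of its irreducible constituents --- in particular $\Xi$ --- is trivial on ${}^gG_x^+\cap G_y=G_{g\cdot x}^+\cap G_y$. It therefore suffices to prove $M_{z_M}^+\subseteq G_{g\cdot x}^+\cap G_y$. Containment in $G_y$ is immediate from the preceding lemma, which gives $M_{z_M}^+=M\cap G_y^+\subseteq G_y^+$; the content is the containment $M_{z_M}^+\subseteq G_{g\cdot x}^+$, which should hold ``for the same reason'' that $M_{z_M}^+=M\cap G_y^+$ does.

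To make that reason explicit, I would proceed as follows. By the construction of $j_M$ one has $M\cap G_{j_M(z_M)}=M_{z_M}$, and as $\overline{j_M(z_M)}$ is a facet associated to $M$ in the sense of Proposition \ref{existence-of-levi}, the identification there gives $M_{z_M}^+=M\cap G_{j_M(z_M)}^+$. Because $j_M(z_M)$ lies in the link of $g\cdot x$ with $g\cdot x$ on the geodesic from $j_M(z_M)$ to $y$, one has $\overline{g\cdot x}\subseteq\overline{j_M(z_M)}$, so $G_{j_M(z_M)}\subseteq G_{g\cdot x}$ and the image $\mathscr{Q}$ of $G_{j_M(z_M)}$ in $G_{g\cdot x}/G_{g\cdot x}^+$ is a parabolic subgroup whose unipotent radical is the image of $G_{j_M(z_M)}^+$. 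By exactly the argument already used with $y$ in place of $g\cdot x$ (the shortest-length coset lemma together with Proposition \ref{existence-of-levi}), the image of $M\cap G_{g\cdot x}$ is the standard Levi factor of $\mathscr{Q}$. Hence the image of $M\cap G_{j_M(z_M)}^+$ lies simultaneously in this Levi factor and in the unipotent radical of $\mathscr{Q}$, so it is trivial; that is, $M_{z_M}^+=M\cap G_{j_M(z_M)}^+\subseteq G_{g\cdot x}^+$. Together with the previous paragraph this proves the lemma.

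As a cross-check one can instead argue directly from Proposition \ref{generators-of-parahorics}: choosing $j_M$ so that the geodesic $\gamma$ lies in the standard apartment --- possible because $y$ and $g\cdot x$ do, $g$ normalising $\SSS$ --- one expresses $M_{z_M}^+$ via its torus part (which lies in the pro-unipotent radical of every parahoric) and the affine root subgroups $U_\psi$ with $\dot\psi$ in the root system of $M$ and $\psi(z_M)>0$; for each such $\psi=\dot\psi+n$ some translate $\dot\psi+m$ vanishes on $\bar z$ by the definition of $M$, hence at $y$ and at $j_M(z_M)$, so $\dot\psi$ is constant along $\gamma$, $\psi(g\cdot x)=\psi(z_M)>0$, and $U_\psi\subseteq G_{g\cdot x}^+$.

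The step I expect to be the main obstacle is the middle one: checking that the position of $g\cdot x$ on $\gamma$ forces $\overline{g\cdot x}\subseteq\overline{j_M(z_M)}$ (equivalently $G_{j_M(z_M)}\subseteq G_{g\cdot x}$), and that the image of $M$ really is the Levi factor of $\mathscr{Q}$. Both are direct analogues of facts already established for $y$, but they need to be read off carefully from the geometry of the embedding $j_M$.
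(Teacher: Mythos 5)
Your argument is correct and follows the same route as the paper's: reduce the lemma to the containment $M_{z_M}^+\subseteq{}^gG_x^+\cap G_y$, with membership in $G_y$ coming from the preceding lemma and the real content being $M_{z_M}^+\subseteq{}^gG_x^+$. The paper dispenses with the latter via the parenthetical ``for the same reason that $M_{z_M}^+\subset G_y^+$,'' so your explicit verification (via the Levi/unipotent-radical decomposition of the parabolic image of $G_{j_M(z_M)}$ in $G_{g\cdot x}/G_{g\cdot x}^+$) simply supplies the detail that the paper elides.
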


So, in particular, the representation $\Xi\downharpoonright_{M_{z_M}}$ identifies with a representation of $M_{z_M}/M_{z_M}^+=\mathscr{M}$. Given an irreducible subrepresentation $\xi$ of $\Xi\downharpoonright_{M_{z_M}}$, we therefore have a notion of the cuspidal support of $\xi$, as the unique $\mathscr{M}$-conjugacy class of pairs $(\mathscr{L}_\xi,\zeta_\xi)$ of cuspidal representations of Levi subgroups of $\mathscr{M}$ such that $\xi$ is contained in the representation parabolically induced from $(\mathscr{L}_\xi,\zeta_\xi)$. Let $\mathscr{Q}_\xi$ be a parabolic subgroup of $\mathscr{M}$ with Levi factor $\mathscr{L}_\xi$, standard in the sense that it contains the image in $\mathscr{M}$ of our fixed Iwahori subgroup $I$ of $G$ (which, upon intersection with $M$, gives an Iwahori subgroup of $M$). The following is then a simple observation:

\begin{lemma}
The inverse image in $G_y$ of $\mathscr{Q}_\xi$ is a standard parahoric subgroup of $G$ corresponding to some point $w$ in the standard chamber of $\mathscr{B}(G)$, and one has a containment $G_w\subset G_z$.
\end{lemma}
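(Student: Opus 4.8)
The plan is to prove this last Lemma — that the inverse image in $G_y$ of the standard parabolic $\mathscr{Q}_\xi$ of $G_y/G_y^+$ is a \emph{standard} parahoric subgroup $G_w$ with $w$ in the standard chamber $X$, and that $G_w\subset G_z$ — by combining the parahoric/parabolic correspondence of Bruhat--Tits with the explicit generators of Proposition \ref{generators-of-parahorics}. First I would recall that, by the order-reversing bijection between parahoric subgroups and facets, and the fact that proper parabolic subgroups of the finite reductive group $G_y/G_y^+$ are precisely the images of parahoric subgroups $G_w\subsetneq G_y$, there is \emph{some} point $w$ with $\bar w\supsetneq\bar y$ (hence $G_w\subsetneq G_y$) whose image in $G_y/G_y^+$ is the given parabolic $\mathscr{Q}_\xi$; the preimage in $G_y$ of $\mathscr{Q}_\xi$ is then $G_y^+G_w=G_w$ (using $G_y^+\subset G_w$, which holds as $\bar w\supset\bar y$). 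This is the same mechanism already invoked for the point $z$ in Morris' Lemma above, so the only genuinely new point is the \emph{standardness}, i.e.\ that $w$ may be chosen in $X$.

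For standardness, the key is that $\mathscr{Q}_\xi$ was chosen to contain the image in $\mathscr{M}$ (equivalently in $G_y/G_y^+$) of the fixed Iwahori $I=G_i$. I would argue as follows: the image of $I\cap G_y$ in $G_y/G_y^+$ is a Borel subgroup (or at least contains one) — indeed $y\in X=\bar i$ by construction, so $\bar y\supset\bar i$, so $G_i\subset G_y$ and the image of $G_i$ in $G_y/G_y^+$ is the minimal parabolic (Borel) determined by our choice of $P_\emptyset$; any standard parabolic $\mathscr{Q}_\xi$ of $G_y/G_y^+$ containing this Borel is then the image of a unique parahoric $G_w$ with $G_i\subset G_w\subset G_y$, and $G_i\subset G_w$ forces $\bar w\subset\bar i=X$, i.e.\ $w\in X$. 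Thus $G_w$ is standard. Concretely this can also be read off from Proposition \ref{generators-of-parahorics}: $G_w$ is generated by $\TTT(F)\cap G_w$ and those $U_\psi$ with $\psi(w)\geq 0$; the condition that the image lies between the Borel and $G_y/G_y^+$ translates into the affine-root inequalities defining a facet in the closure of $X$.

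Finally, the containment $G_w\subset G_z$: we have $G_w\subsetneq G_y$ with image $\mathscr{Q}_\xi$, and $G_z\subset G_y$ with image the parabolic $\mathscr{P}$ from Morris' Lemma. By construction $\xi$ is a subrepresentation of $\Xi\downharpoonright_{M_{z_M}}$, and $\Xi$ is trivial on $M_{z_M}^+=M\cap G_z^+=M\cap G_y^+$, so $\mathscr{Q}_\xi$ — being read off from the cuspidal support of $\xi$, a representation of $\mathscr{M}=M_{z_M}/M_{z_M}^+$, which is the standard Levi factor of $\mathscr{P}$ inside $G_y/G_y^+$ — is a parabolic subgroup of $G_y/G_y^+$ contained in $\mathscr{P}$. (Its Levi factor $\mathscr{L}_\xi$ sits inside $\mathscr{M}$, and we take the standard $\mathscr{Q}_\xi\subset\mathscr{M}\cdot(\text{unipotent radical of }\mathscr{P})$, which is a parabolic of $G_y/G_y^+$ contained in $\mathscr{P}$.) Applying the order-reversing bijection inside $G_y$: $\mathscr{Q}_\xi\subset\mathscr{P}$ gives $G_w\subset G_z$, as claimed. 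I expect the main obstacle to be purely bookkeeping: checking that the standard parabolic $\mathscr{Q}_\xi$ of $\mathscr{M}$, when viewed inside $G_y/G_y^+$ via the identification $\mathscr{M}=M_{z_M}/M_{z_M}^+$ as the Levi factor of $\mathscr{P}$, really does give a parabolic subgroup of $G_y/G_y^+$ that is both standard (contains the fixed Borel) and contained in $\mathscr{P}$ — this requires being careful that the Iwahori $I$ chosen at the start is compatible with the minimal parabolic $P_\emptyset$ and with the reduction maps at both $y$ and $z$, but no serious difficulty beyond that.
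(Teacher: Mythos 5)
Your proposal is correct and follows essentially the same route as the paper: the parahoric--parabolic correspondence in $G_y$ produces $G_w$, the containment of the image of $I$ in $\mathscr{Q}_\xi$ (built into the word ``standard'') gives $I\subset G_w$ and hence $w\in X$, and $\mathscr{Q}_\xi\subset\mathscr{P}$ (inflated by the unipotent radical $\mathscr{N}$ of $\mathscr{P}$) gives $G_w\subset G_z$. Two small slips worth tidying: the containment $G_w\subset G_z$ follows from the \emph{order-preserving} bijection between parabolics of $G_y/G_y^+$ and parahorics contained in $G_y$ rather than anything order-reversing, and throughout one should really be working with $\mathscr{Q}_\xi\mathscr{N}$ (a parabolic of $G_y/G_y^+$) rather than $\mathscr{Q}_\xi$ (a parabolic only of $\mathscr{M}$) when taking preimages in $G_y$ --- you gesture at this in your parenthetical, but the phrasing there inadvertently describes $\mathscr{P}$ itself rather than $\mathscr{Q}_\xi\mathscr{N}$.
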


Indeed, this inverse image must certainly be a parahoric subgroup of $G$ contained in $G_z$. Moreover, since it contains the inverse image of the minimal Levi in $G_y/G_y^+$ corresponding to our fixed Iwahori subgroup of $G$, it contains this Iwahori subgroup, and so corresponds to a point in the standard chamber.\\

As before, let $L=\LLL(F)$ denote the $F$-Levi subgroup associated to the point $w\in\mathscr{B}(G)$, and let $w_L\in\mathscr{B}(L)$ be a vertex such that $L_{w_L}=L\cap M_{z_M}$. Choose some embedding $\iota:\mathscr{B}(L)\hookrightarrow\mathscr{B}(M)$ such that $L_{w_L}=L\cap M_{\iota(w_L)}$. The embedding $j_M:\mathscr{B}(M)\hookrightarrow\mathscr{B}(G)$ then gives us an embedding $j_L=j_M\circ\iota:\mathscr{B}(L)\hookrightarrow\mathscr{B}(G)$. With this in place, by Theorem \ref{depth-zero-covers}, we are able to construct a $G$-cover $(J_\xi,\lambda_\xi)$ of the depth-zero type $(L_{w_L},\zeta_\xi)$ such that $J_\xi=G_{j_L(w_L)}$.\\

We begin by considering, for each fixed choice of irreducible representation $\xi$ of $M_{z_M}/M_{z_M}^+$ as above, the space
\begin{align*}
\Hom_G(\cInd_{J_\xi}^G\ \lambda_\xi,\cInd_{K_g}^G\ \Xi)&=\Hom_{J_\xi}(\lambda_\xi,\Res_{J_\xi}^G\cInd_{K_g}^G\ \Xi)\\
&=\bigoplus_{J_\xi\backslash G/K_g}\Hom_{J_\xi}(\lambda_\xi,\Ind_{J_\xi\cap{}^hK_g}^{J_\xi}\Res_{J_\xi\cap{}^hK_g}^{^hK_g}\ ^h\Xi)\\
&=\bigoplus_{J_\xi\backslash G/K_g}\Hom_{J_\xi\cap{}^hK_g}(\lambda_\xi,{}^h\Xi).
\end{align*}
This space then surjects onto the summand corresponding to $h=1$, namely onto $\Hom_{J_\xi\cap K_g}(\lambda_\xi,\Xi)$.\\

Since $\lambda_\xi$ is an extension of $\zeta_\xi$ by the trivial character of the unipotent subgroups in the Iwahori decomposition of $G_{j_L(w_L)}$ with respect to $L$, it must certainly be the case that $\lambda_\xi|_{J_\xi\cap K_g}$ is trivial on the upper and lower unipotent parts of the Iwahori decomposition of $G_{j_M(z_M)}$ with respect to the standard parabolic subgroup of $G$ with Levi factor $M_{z_M}$. This means that, if we knew that $J_\xi\cap K_g\subset M_{z_M}{}^gG_x^+$, then we would be able to make the identification
\[\Hom_{J_\xi\cap K_g}(\lambda_\xi,\Xi)=\Hom_{J_\xi\cap M_{z_M}}(\lambda_\xi,\Xi),
\]
where the latter space is clearly non-zero due to the construction of $\lambda_\xi$ as a cover of the cuspidal support of an irreducible subrepresentation of $\Xi|_{M_{z_M}}$. So it remains to check that $J_\xi\cap K_g\subset M_{z_M}{}^gG_x^+$. Since $J_\xi=G_{j_L(w_L)}\subset G_{j_M(z_M)}$ and $^gG_x=G_{gx}$, it suffices to check that any of the generators obtained from Proposition \ref{generators-of-parahorics} for the group $G_{j_M(z_M)}\cap K_g=G_{j_M(z_M)}\cap G_{gx}\cap G_y$ are contained in $M_{z_M}G_{gx}^+$. This is straightforward.\\

Now we return to the representation $\tau$. Since, as $\xi$ ranges over the irreducible subrepresentations of $\Xi|_{M_{z_M}}$, the images in $\Xi$ of the elements of $\Hom_{K_g}(\bigoplus_\xi \Ind_{J_\xi}^{K_g}\ \lambda_\xi,\Xi)$ generate $\Xi$, composing the non-zero maps $\cInd_{J_\xi}^G\ \lambda_\xi\rightarrow\cInd_{K_g}^G\ \Xi$ and $\cInd_{K_g}^G\ \Xi\rightarrow\cInd_{G_y}^G\ \tau$ (with the latter arising from $\tau\rightarrow\Ind_{K_g}^{G_y}\ \Xi$) results in, for some $\xi$, a non-zero map $\cInd_{J_\xi}^G\ \lambda_\xi\rightarrow\cInd_{G_y}^G\ \tau$.\\

So, by Frobenius reciprocity, the representation $\tau$ is contained in some irreducible subquotient of $\cInd_{J_\xi}^G\ \lambda_\xi$, for some irreducible subrepresentation $\xi$ of $\Xi|_{M_{z_M}}$. Since $\lambda_\xi$ is a $G$-cover of $(L_\xi,\zeta_\xi)$, any such irreducible subquotient must be non-cuspidal, and so $\tau$ is contained in some non-cuspidal irreducible representation of $G$, and hence cannot be $\SS_\sigma$-typical.
\end{proof}

In the next section, it will be convenient for us to have a slight generalization of this result, showing that there do not exist any $\SS_\sigma$-types defined on non-maximal parahoric subgroups of $G$:

\begin{proposition}\label{all-parahorics-unicity}
Let $y\in\mathscr{B}(G)$, and let $\pi$ be a depth-zero supercuspidal representation of $G$. Let $(G_x,\sigma)$ be an unrefined depth-zero type contained in $\pi$. If $y$ is not a vertex, then no irreducible subrepresentation of $\pi|_{G_y}$ may be $\SS_\sigma$-typical.
\end{proposition}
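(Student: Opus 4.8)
The plan is to deduce the statement from Theorem~\ref{unrefined-unicity} by restricting further to a maximal parahoric subgroup containing $G_y$. Recall that $G_x$ is maximal, since $\pi$ is supercuspidal. Assume, for a contradiction, that some irreducible subrepresentation $\tau$ of $\pi\downharpoonright_{G_y}$ is $\SS_\sigma$-typical. Since $y$ is not a vertex, its facet has positive dimension; fix a vertex $y'$ in the closure of this facet, so that $G_y\subsetneq G_{y'}$ with $G_{y'}$ a maximal parahoric subgroup of $G$. One checks using Proposition~\ref{generators-of-parahorics} that $G_{y'}^+\subseteq G_y^+\subseteq G_y$, while by Bruhat--Tits theory the image of $G_y$ in the finite reductive quotient $\bar G = G_{y'}/G_{y'}^+$ is a \emph{proper} parabolic subgroup, which I will call $\bar P$, with Levi factor $\bar M$ and unipotent radical $\bar U$.

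The first step is to propagate typicality from $G_y$ up to $G_{y'}$. For any irreducible representation $\pi'$ of $G$, Frobenius reciprocity gives
\[
\Hom_{G_{y'}}\!\bigl(\pi'\downharpoonright_{G_{y'}},\ \Ind_{G_y}^{G_{y'}}\tau\bigr)\ \cong\ \Hom_{G_y}\!\bigl(\pi'\downharpoonright_{G_y},\ \tau\bigr),
\]
so if an irreducible constituent $\rho$ of $\Ind_{G_y}^{G_{y'}}\tau$ satisfies $\Hom_{G_{y'}}(\pi'\downharpoonright_{G_{y'}},\rho)\neq 0$, then the right-hand side above is non-zero, and the $\SS_\sigma$-typicality of $(G_y,\tau)$ forces $\pi'\in\Rep^{\SS_\sigma}(G)$. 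Hence every irreducible constituent of $\Ind_{G_y}^{G_{y'}}\tau$ is $\SS_\sigma$-typical. As $G_{y'}$ is maximal, Theorem~\ref{unrefined-unicity} then shows that each such constituent is $G$-conjugate to $(G_x,\sigma)$, hence is inflated from an irreducible \emph{cuspidal} representation of $G_{y'}/G_{y'}^+$. In particular $\Ind_{G_y}^{G_{y'}}\tau$ is trivial on $G_{y'}^+$; since $\tau$ is a direct summand of $\Res_{G_y}^{G_{y'}}\Ind_{G_y}^{G_{y'}}\tau$, it follows that $\tau$ is trivial on $G_{y'}^+$ too, and hence factors through an irreducible representation $\bar\tau$ of $\bar P$, with $\Ind_{G_y}^{G_{y'}}\tau$ now identified with $\Ind_{\bar P}^{\bar G}\bar\tau$. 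We have thus exhibited $\Ind_{\bar P}^{\bar G}\bar\tau$ as a direct sum of cuspidal representations of the finite reductive group $\bar G$.

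This final conclusion is untenable. Indeed, $\bar\tau\downharpoonright_{\bar M}$ is a non-zero representation of $\bar M$, so it has an irreducible constituent whose cuspidal support $(\bar L,\zeta)$ involves a Levi subgroup $\bar L$ of $\bar M$, necessarily a \emph{proper} Levi subgroup of $\bar G$. A computation with Mackey's theorem --- the geometric lemma for the finite reductive group $\bar G$, with the key contribution coming from the longest $(\bar P,\bar P)$-double coset --- shows that $\Ind_{\bar P}^{\bar G}\bar\tau$ shares an irreducible constituent with the Harish--Chandra induced representation $R_{\bar L}^{\bar G}(\zeta)$. As $\bar L$ is a proper Levi subgroup of $\bar G$, this common constituent has cuspidal support $(\bar L,\zeta)$ with $\bar L\neq\bar G$, and so is \emph{non-cuspidal}, contradicting the previous paragraph. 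Therefore no irreducible subrepresentation of $\pi\downharpoonright_{G_y}$ can be $\SS_\sigma$-typical.

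The steps I expect to require the most care are, first, the reduction modulo $G_{y'}^+$ --- one must check that $\Ind_{G_y}^{G_{y'}}$ is compatible with inflation along $\bar P = G_y/G_{y'}^+$ and $\bar G = G_{y'}/G_{y'}^+$, and identify $\bar P$ as a proper parabolic subgroup of $\bar G$ using Bruhat--Tits theory --- and, second, the geometric-lemma computation in the last paragraph. Both are essentially standard, and once they are in place the proposition follows; the conceptual content is simply that an $\SS_\sigma$-typical representation of a non-maximal parahoric would express a sum of cuspidal representations as an induction from a proper parabolic subgroup of a finite reductive group, which is impossible.
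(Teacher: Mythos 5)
Your proof is correct but takes a genuinely different route from the paper's.

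The paper does \emph{not} induce up to a maximal parahoric and then reason inside the finite reductive quotient. Instead, it first reduces (using the internal machinery of the proof of Theorem~\ref{unrefined-unicity}, not just its statement) to the case $G_y\subset G_x$, and then runs the cover argument from that proof once more: it identifies the cuspidal support $(\mathscr{L},\zeta)$ of a constituent of $\Xi|_{\mathscr{M}}$ in $G_x/G_x^+$, pulls back the opposite parabolic $\mathscr{Q}^{\mathrm{op}}$ to a \emph{non-maximal} parahoric $G_w\subset G_x$, and uses the fact that $(G_w,\zeta)$ is an unrefined depth-zero type on a non-maximal parahoric to produce, via a surjection $\Hom_G(\cInd_{G_w}^G\zeta,\cInd_{G_y}^G\Xi)\twoheadrightarrow\Hom_{G_w\cap G_y}(\zeta,\Xi)\neq 0$, a non-cuspidal irreducible representation of $G$ containing $\Xi$. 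By contrast, you invoke Theorem~\ref{unrefined-unicity} only as a black box, push $\tau$ up to $G_{y'}$ by Frobenius reciprocity (which propagates $\SS_\sigma$-typicality to all constituents of $\Ind_{G_y}^{G_{y'}}\tau$), and turn the problem into a pure finite-group statement: $\Ind_{\bar P}^{\bar G}\bar\tau$ cannot be a direct sum of cuspidals when $\bar P\subsetneq\bar G$ is a proper parabolic. This is cleaner and more self-contained, and it also transparently handles the case where $y'$ is not $G$-conjugate to $x$ (then $G_{y'}$ carries no $\SS_\sigma$-typical representation at all, an immediate contradiction), which the paper handles by again appealing to the interior of the proof of Theorem~\ref{unrefined-unicity}.

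One small imprecision: to see that $\Ind_{\bar P}^{\bar G}\bar\tau$ is not all-cuspidal, the cleanest route is to take the Jacquet module with respect to the \emph{opposite} parabolic $\bar P^{\mathrm{op}}=\bar M\bar U^{\mathrm{op}}$ and use the \emph{identity} $(\bar P^{\mathrm{op}},\bar P)$-double coset: since $\bar P^{\mathrm{op}}\cap\bar P=\bar M$, Mackey gives a summand $\Ind_{\bar M}^{\bar P^{\mathrm{op}}}(\bar\tau|_{\bar M})$ of $\Res_{\bar P^{\mathrm{op}}}\Ind_{\bar P}^{\bar G}\bar\tau$, whose $\bar U^{\mathrm{op}}$-invariants are exactly $\bar\tau|_{\bar M}\neq 0$, so $r_{\bar M}^{\bar G}(\Ind_{\bar P}^{\bar G}\bar\tau)\neq 0$. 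Your phrasing in terms of ``the longest $(\bar P,\bar P)$-double coset'' presumes that $\bar P$ is $\bar G$-conjugate to $\bar P^{\mathrm{op}}$, which can fail (e.g.\ a maximal parabolic of $\GL_3$ of type $(2,1)$ is conjugate only to those of type $(1,2)$, not to itself by an opposition), and then the intersection $\bar P\cap{}^{w_0}\bar P$ for the longest double coset representative is strictly smaller than $\bar M$. The fact you need is nonetheless true, and the fix is cosmetic; but the cleaner statement is the one with the opposite parabolic and the identity double coset.
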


\begin{proof}
Suppose that $y$ is not a vertex. Since $y$ is contained in the interior of a facet of positive dimension, there exists a vertex $z\in\mathscr{B}(G)$ such that $G_y\subset G_z$. Let $\Xi$ be an irreducible subrepresentation of $\pi|_{G_y}=\pi|_{G_z}|_{G_y}$. By Theorem \ref{unrefined-unicity}, unless $z$ is conjugate to $x$ under the action of $G$, we may find a non-cuspidal irreducible representation $\pi'$ of $G$ in which $\Xi$ is contained, which shows that $\Xi$ is atypical for $\SS_\sigma$. So, without loss of generality, we assume that $G_y\subset G_x$. Applying Theorem \ref{unrefined-unicity} again, we conclude that $\Xi$ must be an irreducible subrepresentation of $^g\sigma|_{G_y}$ for some $g\in N_G(G_x)$.\\

Projecting onto the reductive quotient $G_x/G_x^+$, we see that $\Xi$ is an irreducible subrepresentation of the restriction of $\sigma$ to the proper parabolic subgroup $\mathscr{P}=G_y/(G_y\cap G_x^+)$ of $G_x/G_x^+$. Let $\mathscr{P}$ have a standard Levi decomposition $\mathscr{P}=\mathscr{M}\mathscr{N}$, and let $\xi$ be an irreducible subrepresentation of the restriction to $\mathscr{M}$ of $\Xi$, so that $\xi$ has cuspidal support $(\mathscr{L},\zeta)$, say. Let $\mathscr{Q}$ denote the standard parabolic subgroup of $G_x/G_x^+$ with Levi factor $\mathscr{L}$, and let $\mathscr{Q}^\op$ denote the parabolic subgroup opposite to $\mathscr{Q}$. Forming the inverse image in $G_x$ of $\mathscr{Q}^\op$, we obtain a parahoric subgroup $G_w$ corresponding to some point $w\in\mathscr{B}(G)$. One obtains an identification $G_w/G_w^+=\mathscr{L}$, and the pair $(G_w,\zeta)$ is an unrefined depth-zero type. Now, just as in the proof of Theorem \ref{unrefined-unicity}, the space $\Hom_G(\cInd_{G_w}^G\ \zeta,\cInd_{G_y}^G\ \Xi)$ surjects onto the space $\Hom_{G_w\cap G_y}(\zeta,\Xi)$. The group $G_w\cap G_y$ is the inverse image in $G_x$ of $\mathscr{Q}^\op\cap\mathscr{P}$, which is precisely $\mathscr{L}$. Since $\zeta$ is the cuspidal support of the irreducible subrepresentation $\xi$ of $\Xi|_{\mathscr{M}}$, this latter space is certainly non-zero. So we see that $\cInd_{G_y}^G\ \Xi$ contains a non-cuspidal irreducible subquotient, which, by Frobenius reciprocity, is to say that $\Xi$ must be atypical.
\end{proof}

\section{Extension to archetypes}

So it remains for us to consider the impact of Theorem \ref{unrefined-unicity} once one performs an extension to refined depth-zero types. Any maximal compact subgroup of $G$ contains finitely many parahoric subgroups of $G$, but \emph{not every maximal compact subgroup must contain a maximal parahoric subgroup} --- for example, given a ramified quadratic extension $E/F$, the group $U(1,1)(E/F)$ contains a maximal compact subgroup of orthogonal type, the only parahoric subgroup in which is an Iwahori subgroup. Given a maximal compact subgroup $K$ of $G$, the maximal compact subgroup of the $G$-normalizer of the largest parahoric subgroup contained in $K$ coincides with $K$. We wish to see that if a $[G,\pi]_G$-type $\tau$ is defined on $K$, then $K$ must contain $G_x$ and $\tau$ must be isomorphic to a subrepresentation of $\Ind_{G_x}^K\ \sigma$ --- that is to say, we wish to see that the refined depth-zero types are precisely the archetypes for depth-zero supercuspidals.\\

In order to avoid worrying about conjugacy, let us adopt the convention that, when speaking of an archetype $(K,\tau)$, any parahoric subgroup contained in $K$ is standard (as is clearly possible).\\

We resume the notation of section \ref{section:parahoric-unicity}. In particular, $\pi$ is a depth-zero supercuspidal representation of $G$ containing an unrefined depth-zero type $(G_x,\sigma)$, with $x$ a vertex in the standard chamber of $\mathscr{B}(G)$.

\begin{lemma}\label{restriction-to-parahoric}
Let $\pi$ be a depth-zero supercuspidal representation of $G$, and let $(K,\tau)$ be a $[G,\pi]_G$-archetype. Then, up to $G$-conjugacy, $K$ contains the maximal parahoric subgroup $G_x$ of $G$ on which the unrefined depth-zero type for $\pi$ is defined, and $\tau|_{G_x}$ is isomorphic to a sum of unrefined depth-zero types, which are pairwise $K$-conjugate.
\end{lemma}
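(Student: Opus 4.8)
The plan is to proceed in two stages. First I would show that $K$ must contain a maximal parahoric subgroup; then, assuming it does and that this parahoric is standard (and hence, by unicity on the level of parahorics, conjugate to $G_x$), I would analyze $\tau|_{G_x}$ and show its irreducible constituents are unrefined depth-zero types.

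For the first stage, suppose $K$ does not contain a maximal parahoric subgroup. Let $G_y$ be the largest parahoric subgroup contained in $K$, so $K$ is the maximal compact subgroup of $N_G(G_y)$ and $y$ is not a vertex. Restricting $\tau$ to $G_y$, every irreducible constituent $\Xi$ of $\tau|_{G_y}$ is also an irreducible constituent of $\pi|_{G_y}$, since $(K,\tau)$ is a $[G,\pi]_G$-type forces $\pi$ to contain $\tau$, hence to contain $\Xi$ upon restriction to $G_y$. By Proposition \ref{all-parahorics-unicity}, no such $\Xi$ can be $\SS_\sigma$-typical, so each $\Xi$ lies in some non-cuspidal irreducible representation $\pi'$ of $G$. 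But then $\Hom_{G_y}(\pi'|_{G_y},\Xi)\neq 0$, and since $\Xi$ is a constituent of $\tau|_{G_y}$ one deduces $\Hom_K(\pi'|_K,\tau)\neq 0$ (by inducing up and using that $\tau$ appears in $\Ind_{G_y}^K \Xi$, or directly from the containment of $\Xi$ in $\tau|_{G_y}$ together with Frobenius reciprocity applied inside $K$). This contradicts the assumption that $(K,\tau)$ is $[G,\pi]_G$-typical, since $\pi'$ is non-cuspidal and thus not an unramified twist of $\pi$. Hence $K$ contains a maximal parahoric subgroup, which after conjugation we take to be standard; by the first theorem of section \ref{section:parahoric-unicity} (or by the uniqueness of the unrefined type in $\pi$) this maximal parahoric is conjugate to $G_x$, so we may assume $G_x\subset K$.

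For the second stage, let $\Xi$ be an irreducible constituent of $\tau|_{G_x}$. Again $\Xi$ is a constituent of $\pi|_{G_x}$, so by Theorem \ref{unrefined-unicity}, $\Xi$ is either $\SS_\sigma$-atypical or $G$-conjugate to $\sigma$; in the former case the same argument as above produces a non-cuspidal $\pi'$ with $\Hom_K(\pi'|_K,\tau)\neq 0$, contradicting typicality. (Here one uses that any constituent of $\tau|_{G_x}$ which sits inside a non-cuspidal $\pi'|_{G_x}$ forces $\tau$ itself to meet $\pi'|_K$, since $G_x\subset K$.) Therefore every irreducible constituent of $\tau|_{G_x}$ is $G$-conjugate to $\sigma$; but all these constituents are constituents of a single representation $\tau$ of $K$, with $G_x$ normal in $K$, so Clifford theory shows they form a single $K$-orbit and in particular are pairwise $K$-conjugate. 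Since $\sigma$ is a cuspidal representation of $G_x/G_x^+$, each of these constituents is an unrefined depth-zero type, and $\tau|_{G_x}$ is a direct sum of them.

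The main obstacle is the bookkeeping in the first stage: passing from the statement ``$\Xi$ is a constituent of $\tau|_{G_y}$ lying in a bad $\pi'$'' to ``$\tau$ itself meets $\pi'|_K$'' requires care about which direction of Frobenius reciprocity one uses and whether $\Xi$ occurs in $\tau|_{G_y}$ as a subrepresentation or merely a subquotient. Since $K/G_y$ is finite, all the representations in sight are semisimple over $G_y$-isotypic components, so ``subquotient'' and ``subrepresentation'' coincide and the argument closes; but one must also ensure that the $\pi'$ produced by Proposition \ref{all-parahorics-unicity} is genuinely not an unramified twist of $\pi$ — this is exactly the content of its being non-cuspidal while $\pi$ is supercuspidal — so that the contradiction with typicality is valid.
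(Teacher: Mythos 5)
There is a genuine gap in the central step of both stages of your argument, namely the claim that, given an irreducible $G_y$-constituent $\Xi$ of $\tau|_{G_y}$ and a non-cuspidal $\pi'$ with $\Hom_{G_y}(\pi'|_{G_y},\Xi)\neq 0$, one may conclude $\Hom_K(\pi'|_K,\tau)\neq 0$. Frobenius reciprocity inside $K$ gives $\Hom_K(\pi'|_K,\Ind_{G_y}^K\ \Xi)\cong\Hom_{G_y}(\pi'|_{G_y},\Xi)\neq 0$, so $\pi'|_K$ contains \emph{some} irreducible constituent of $\Ind_{G_y}^K\ \Xi$; and from $\Xi\hookrightarrow\tau|_{G_y}$ one sees that $\tau$ is \emph{also} a constituent of $\Ind_{G_y}^K\ \Xi$. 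But $\Ind_{G_y}^K\ \Xi$ will in general have several non-isomorphic irreducible constituents (by Clifford theory, all the irreducibles of $K$ whose restriction meets the $K$-orbit of $\Xi$), and nothing forces the constituent picked out by $\pi'|_K$ to be $\tau$. Passing to $G_y$-isotypic components and the semisimplicity of representations of compact groups does not repair this; the issue is not subquotient versus subrepresentation but the failure of a shared $G_y$-constituent to upgrade to a shared $K$-constituent. So the contradiction with $[G,\pi]_G$-typicality is not reached, and the same problem recurs verbatim in your second stage.

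The paper sidesteps this by never trying to show $\tau\hookrightarrow\pi'|_K$. Instead it extracts from $\pi'$ a \emph{module-theoretic invariant} of the $G_y$-constituent $\rho$ itself. One passes to the subrepresentation $R$ of $\cInd_{G_w}^G\ \sigma'$ generated by a generator of $\rho$ (where $(G_w,\sigma')$ is a depth-zero type in $\pi'$ on a non-maximal standard parahoric), takes an irreducible quotient $\Psi$ of $R$, and observes that $\Psi$ contains $\sigma'$ because $\Rep^{\SS_{\sigma'}}(G)$ is closed under quotients; this forces $\rho$, hence $\tau$, to have a nonzero vector fixed by $G_y\cap G_w^+$. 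After replacing $(G_w,\sigma')$ by a $G$-conjugate so that $G_w^+\subset G_y$, $\tau$ itself has a nonzero $G_w^+$-fixed vector, so some irreducible subquotient of $\cInd_K^G\ \tau$ does too. But since $(K,\tau)$ is a $[G,\pi]_G$-type, every such subquotient is supercuspidal and therefore cannot have a nonzero vector fixed by the pro-unipotent radical of a non-maximal parahoric. This is the contradiction, and it is obtained at the level of fixed vectors rather than by trying to match irreducible $K$-constituents. The remainder of your outline (invoking Proposition \ref{all-parahorics-unicity} to rule out non-vertices, Theorem \ref{unrefined-unicity} to identify $G_y$ with $G_x$, and Clifford theory for the pairwise $K$-conjugacy) does follow the paper once this crux is fixed, but as written the proof does not close.
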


\begin{proof}
Let $y$ be such that $G_y$ is the largest parahoric subgroup contained in $K$, so that $K$ is the maximal compact subgroup of the normalizer of $G_y$ (note that $y$ will be a vertex if and only if $K$ normalizes the parahoric subgroup corresponding to some vertex). Since $\tau\hookrightarrow\pi|_K$, we have that $\tau|_{G_y}\hookrightarrow\pi|_{G_y}$, and we have seen by Proposition \ref{all-parahorics-unicity} that any irreducible subrepresentation of this latter representation which is not an unrefined depth-zero type must be contained in an irreducible depth-zero non-cuspidal representation of $G$. So pick an irreducible subrepresentation $\rho$ of $\tau|_{G_y}$, and suppose for contradiction that $\rho$ is contained in such a non-cuspidal representation $\pi'$. The representation $\pi'$ contains an unrefined depth-zero type $(G_w,\sigma')$ with $G_w$ a non-maximal standard parahoric subgroup of $G$; let $R$ denote the subrepresentation of $\cInd_{G_w}^G\ \sigma'$ generated by $\rho$. Since $\rho$ is irreducible, it is generated by a single vector $v$, say. By Frobenius reciprocity, the representation $\cInd_{G_w}^G\ \sigma$ contains a canonical copy of $\rho$, and we see that $R$ coincides with the subrepresentation of $\cInd_{G_w}^G\ \sigma'$ generated by $v$.\\

As $R$ is finitely generated, it admits an irreducible quotient $\Psi$, say. Since $R$ is contained in the category $\Rep^{\SS_{\sigma'}}(G)$, which is closed under taking quotients, we see that $\Psi$ contains $\sigma'$, and so $\Psi$ has a vector fixed by $G_w^+$. Now, as $\Psi$ is a quotient of the representation $R$, which we generated by $\rho$, and $\Psi$ has a vector fixed by $G_w^+$, we deduce that $\rho$ has a vector fixed by $G_y\cap G_w^+$. In particular, $\tau$ must also have a vector fixed by $G_y\cap G_w^+$. We claim that, given any point $w\in\mathscr{B}(G)$ and any point $y\in\mathscr{B}(G)$ contained in the interior of a facet of higher dimension than $y$, there exists a $g\in G$ such that $G_w^+\subset G_{gy}$. Indeed, $G_w^+$ is contained in the pro-unipotent radical of an Iwahori subgroup, and hence in the Iwahori subgroup itself. There is an element of the orbit of $y$ in the chamber corresponding to this Iwahori subgroup, and the claim follows. So we may conjugate our choice of depth-zero type $(G_w,\sigma')$ and assume without loss of generality that $G_w^+\subset G_y$. Hence $\tau$ has a non-zero vector fixed by $G_w^+$, and so there exists an irreducible subquotient of $\cInd_K^G\ \tau$ with such a vector. On the other hand, $(K,\tau)$ is a $[G,\pi]_G$-type, and so any such subquotient must be supercuspidal. But a supercuspidal representation may not possess a non-zero vector fixed by the pro-unipotent radical of any non-maximal parahoric subgroup of $G$.\\

In order to avoid this contradiction, we conclude that the restriction to $G_y$ of $\tau$ is a sum of unrefined depth-zero types. So by Proposition \ref{all-parahorics-unicity}, $y$ must be a vertex, and moreover, by Theorem \ref{unrefined-unicity}, $y$ must be conjugate to $x$ under the action of $G$. The result follows, with the fact that the components of $\tau|_{G_x}$ are pairwise $K$-conjugate following by Clifford theory.
\end{proof}

With this in place, we come to our main result:

\begin{theorem}[The unicity of types for depth-zero supercuspidals]\label{refined-unicity}
Let $\pi$ be a depth-zero supercuspidal representation of $G$.Then there exists a unique $[G,\pi]_G$-archetype $(K,\tau)$. The group $K$ contains a maximal parahoric subgroup $G_x$ of $G$, and the restriction to $G_x$ of $\tau$ is isomorphic to a sum of unrefined depth-zero types contained in $\pi$.
\end{theorem}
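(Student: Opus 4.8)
The plan is to use Lemma~\ref{restriction-to-parahoric} together with Theorem~\ref{unrefined-unicity} to pin down the maximal compact subgroup $K$, reduce the problem to a uniqueness statement about the typical constituents of a single induced representation, and then settle that statement via Morris's refinement (Theorem~\ref{morris-refinement}). For existence, let $(G_x,\sigma)$ be the unrefined depth-zero type contained in $\pi$; as $\pi$ is supercuspidal, $G_x$ is a maximal parahoric subgroup and $x$ a vertex, so $K:=\tilde{\GGG}_x(\dedekind)$ is a maximal compact subgroup of $G$ (the maximal compact subgroup of $N_G(G_x)$) containing $G_x$. Because $(G_x,\sigma)$ is an $\SS_\sigma$-type contained in $\pi$, we have $\pi\in\Rep^{\SS_\sigma}(G)=\prod_{\frak{s}\in\SS_\sigma}\Rep^{\frak{s}}(G)$; being irreducible, $\pi$ lies in a single factor, which --- $\pi$ being supercuspidal --- is $\Rep^{[G,\pi]_G}(G)$, so $[G,\pi]_G\in\SS_\sigma$. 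Theorem~\ref{morris-refinement} now produces an irreducible constituent $\tau$ of $\Ind_{G_x}^K\ \sigma$ that is a $[G,\pi]_G$-type; since $K$ is maximal compact, $(K,\tau)$ is a $[G,\pi]_G$-archetype, and Clifford theory for $G_x\triangleleft K$ shows $\tau|_{G_x}$ is a sum of $K$-conjugates of $\sigma$, each an unrefined depth-zero type occurring in $\pi$. This gives existence and the final assertion of the theorem for this $(K,\tau)$.

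For uniqueness, let $(K',\tau')$ be any $[G,\pi]_G$-archetype. By Lemma~\ref{restriction-to-parahoric} and its proof, after a $G$-conjugation we may assume that $G_x$ is the largest parahoric subgroup contained in $K'$ and that $\tau'|_{G_x}$ is a sum of unrefined depth-zero types occurring in $\pi$ and defined on $G_x$, pairwise $K'$-conjugate; hence $K'$ is the maximal compact subgroup of $N_G(G_x)$, i.e.\ $K'=K$. By Morris's uniqueness the unrefined depth-zero types of $\pi$ defined on $G_x$ form a single $N_G(G_x)$-orbit, so, conjugating $(K,\tau')$ by a suitable element of $N_G(G_x)$ (which normalizes $K=\tilde{\GGG}_x(\dedekind)$), we may assume $\sigma$ itself occurs in $\tau'|_{G_x}$. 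By Frobenius reciprocity and the semisimplicity of $\Ind_{G_x}^K\ \sigma$ this makes $\tau'$ an irreducible constituent of $\Ind_{G_x}^K\ \sigma$. So $\tau$ and $\tau'$ are both $[G,\pi]_G$-typical irreducible constituents of $\Ind_{G_x}^K\ \sigma$, and it remains to see that there is exactly one isomorphism class of such.

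By Theorem~\ref{morris-refinement}, every irreducible constituent $\rho$ of $\Ind_{G_x}^K\ \sigma$ is an $\frak{s}(\rho)$-type for some $\frak{s}(\rho)\in\SS_\sigma$, and $\frak{s}(\rho)$ is well-defined because an $\frak{s}$-type determines the Bernstein block $\Rep^{\frak{s}}(G)$; the converse part of Theorem~\ref{morris-refinement} shows that $\rho\mapsto\frak{s}(\rho)$ is surjective onto $\SS_\sigma$. I claim that it is a bijection. Granting this, there is a unique constituent $\rho$ with $\frak{s}(\rho)=[G,\pi]_G$, namely $\tau$. Now $\tau'$ occurs in $\pi|_K$ (as in the proof of Lemma~\ref{restriction-to-parahoric}) and is a full $\frak{s}(\tau')$-type, so $\pi\in\Rep^{\frak{s}(\tau')}(G)$, forcing $\frak{s}(\tau')=[G,\pi]_G$; therefore $\tau'\cong\tau$, and the original $(K',\tau')$ is $G$-conjugate to $(K,\tau)$, proving uniqueness.

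The main obstacle is precisely this claimed bijectivity of $\rho\mapsto\frak{s}(\rho)$ --- equivalently, that Morris's refinement cannot yield two non-isomorphic $[G,\pi]_G$-typical constituents of $\Ind_{G_x}^K\ \sigma$. I would prove it by a count: Clifford theory for $G_x\triangleleft K$ identifies the isomorphism classes of constituents of $\Ind_{G_x}^K\ \sigma$ with the irreducible projective representations of the stabilizer of $\sigma$ in the finite abelian group $K/G_x$ for a certain $2$-cocycle, while the explicit structure of the depth-zero Hecke algebra $\End_G(\cInd_{G_x}^G\ \sigma)$ --- a direct sum, indexed by $\SS_\sigma$, of matrix algebras over commutative rings of Laurent-polynomial type --- shows that $|\SS_\sigma|$ is controlled by the same cocycle, so the two counts coincide. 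This is the point where the argument genuinely depends on the (possibly non-split) Clifford data; the remaining steps (identifying $K'$ with $K$, passing from ``typical'' to ``constituent of $\Ind_{G_x}^K\ \sigma$'', and the conjugacy bookkeeping left open by Lemma~\ref{restriction-to-parahoric} and Theorem~\ref{unrefined-unicity}) are then routine.
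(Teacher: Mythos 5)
Your existence argument and the initial reduction (to $K'=K$ and to $\tau'$ being a constituent of $\Ind_{G_x}^K\ \sigma$, via Lemma~\ref{restriction-to-parahoric}) match the paper. The divergence, and the gap, is in the final uniqueness step.

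The paper does \emph{not} argue by counting. Instead, given two $[G,\pi]_G$-typical constituents $\tau,\tau'$ of $\Ind_{G_x}^K\ \sigma$, it observes that both have $\pi$ as a subquotient of $\cInd_K^G$, hence they intertwine in $G$: there is $g\in G$ with $\Hom_{K\cap{}^gK}(\tau,{}^g\tau')\neq 0$. Restricting to $G_x$, Mackey theory produces a non-zero $\Hom_{G_x}\bigl(\tau|_{G_x},\Ind_{{}^gG_x\cap G_x}^{G_x}\Res\,{}^g\tau'\bigr)$; since $\tau|_{G_x}$ is a sum of conjugates of the cuspidal $\sigma$, Theorem~\ref{unrefined-unicity} forces $g\in N_G(G_x)$, hence $g\in N_G(K)$ and $\tau'\simeq{}^g\tau$. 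This is exactly the point where Theorem~\ref{unrefined-unicity} does the work; you relegate it to ``conjugacy bookkeeping'' and instead place the burden on a new claim.

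That claim --- that $\rho\mapsto\frak{s}(\rho)$ is a \emph{bijection from isomorphism classes} of constituents of $\Ind_{G_x}^K\ \sigma$ onto $\SS_\sigma$ --- is both unproved and, I believe, too strong. It is strictly more than the theorem requires: the paper's conclusion is only that $\tau'\simeq{}^g\tau$ for some $g\in N_G(K)$, i.e.\ uniqueness up to $N_G(K)$-conjugacy, \emph{not} uniqueness up to isomorphism over $K$. If some $g\in N_G(K)\setminus K$ fixes the $K$-orbit of $\sigma$ but acts non-trivially on the constituents of $\Ind_{G_x}^K\ \sigma$, then $\rho$ and ${}^g\rho$ are non-isomorphic constituents which are $G$-conjugate and hence types for the \emph{same} $\frak{s}$, violating your bijectivity. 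Nothing in the sketch rules this out, and the paper's formulation is deliberately agnostic about it. Moreover, the ``count'' itself rests on an unverified description of $\End_G(\cInd_{G_x}^G\ \sigma)$ as a direct sum of matrix algebras over Laurent-polynomial rings indexed by $\SS_\sigma$, and on matching a projective-representation count of $\mathrm{Stab}_{K/G_x}(\sigma)$ against it; neither side of this dictionary is established, and the relevant comparison would have to be between $|\SS_\sigma|$ and the number of $N_G(K)$-\emph{orbits} of constituents, not isomorphism classes. I would replace this paragraph by the paper's intertwining argument, which gives precisely the $N_G(K)$-conjugacy you need with no extra input.
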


\begin{proof}
The existence of such an archetype is Theorem \ref{morris-refinement}.\\So let $(K,\tau)$ be such a $[G,\pi]_G$-archetype, and let $(G_x,\sigma)$ be an unrefined depth-zero type contained in $\pi$, with $G_x$ standard. We have seen that, without loss of generality, we may assume that $K$ contains $G_x$ and $\tau|_{G_x}$ is isomorphic to a sum of conjugates of $\sigma$ (possibly with multiplicity). So the $[G,\pi]_G$-archetypes are exhausted by the $[G,\pi]_G$-typical subrepresentations of $\Ind_{G_x}^K\ \sigma$. There is a unique conjugacy class of such representations.\\

Indeed, let $\tau'$ be another such representation, which, without loss of generality, we assume to be a representation of $K$. Since both $\tau$ and $\tau'$ are $[G,\pi]_G$-types, $\pi$ arises as a subquotient of both $\cInd_K^G\ \tau$ and $\cInd_K^G\ \tau'$, so that $\tau$ and $\tau'$ intertwine in $G$, i.e. there exists a $g\in G$ such that $\Hom_{K\cap{}^gK}(\tau,{}^g\tau')\neq 0$. Certainly $\tau|_{G_x}$ must then also intertwine with $\tau'|_{G_x}$, which, by Frobenius reciprocity, implies that that there exists a $g\in G$ such that
\[0\neq\Hom_{G_x}(\Res_{G_x}^K\ \tau,\Ind_{^gG_x\cap G_x}^{G_x}\Res_{^gG_x\cap G_x}^{^gG_x}\ ^g\tau').
\]
We have seen that the restriction to $G_x$ of $\tau$ is a sum of unrefined depth-zero types, say $\tau|_{G_x}=\bigoplus_h{}^h\sigma^{\oplus m(\sigma)}$. On the other hand, we know by Theorem \ref{unrefined-unicity} that any subrepresentation of
\[\Ind_{^gG_x\cap G_x}^{G_x}\Res_{^gG_x\cap G_x}^{^gG_x}\ ^g\tau'=\Ind_{^gG_x\cap G_x}^{G_x}\Res_{^gG_x\cap G_x}^{^gG_x}\ ^g\left(\bigoplus_{K/N_K(\sigma)}{}^h\sigma^{\oplus m(\sigma)}\right)
\]
such that $g\not\in N_G(G_x)$ must be atypical. So $g\in N_G(G_x)$. Since $^gK$ contains $^gG_x=G_x$ and $K$ is the unique maximal compact subgroup in which $G_x$ is contained, we must have $^gK=K$, so that $\tau'\simeq{}^g\tau$ for some $g\in N_G(K)$.
\end{proof}

In particular, we have established Conjecture \ref{unicity-conjecture} in the case of a depth-zero supercuspidal representation of an arbitrary group.

\section{The Langlands correspondence of DeBacker and Reeder}\label{DeBacker--Reeder-section}

Having completely described the unicity of types for depth-zero supercuspidals, we now turn to the question of how this fits in with the local Langlands correspondence. In order to be able to do this, we must restrict to the case where a local Langlands correspondence has been established for the depth-zero supercuspidal representations. Recall that an inner form of a  connected reductive $F$-group $\HHH$ is a connected reductive $F$-group $\HHH'$ with $\HHH\times_F\bar{F}\simeq\HHH'\times_G\bar{F}$; we extend this definition in to natural way to the $p$-adic groups of $F$-points. The inner forms of $G$ are naturally parametrized by the image in $H^1(\Gal(\bar{F}/F),\mathrm{Aut}(\GGG))$ of the Galois cohomology group $H^1(\Gal(\bar{F}/F),\GGG^\ad)$, where $\GGG^\ad=\GGG/\ZZZ_\GGG$ is the adjoint form of $\GGG$. There is then a canonical map $H^1(\Gal(\bar{F}/F),\GGG)\rightarrow H^1(\Gal(\bar{F}/F),\GGG^\ad)$, which is in general neither injective or surjective; we say that an inner form of $\GGG$ is \emph{pure} if it corresponds to a cohomology class in the image of this map. For the remainder of the paper, we impose the following hypothesis:

\begin{hypothesis}\label{unramified-hypothesis}
We assume that $\GGG$ is a pure inner form of an unramified group, i.e. there exists a pure inner form of $\GGG$ which is $F$-quasi-split and $E$-split for some finite unramified extension $E/F$.
\end{hypothesis}

This assumption is precisely that which is required for us to have a suitable Langlands correspondence for (a certain subset of) the depth-zero supercuspidal representations of $G$. We begin by recalling some generalities on the local Langlands correspondence.\\

Denote by $W_F$ the Weil group of $F$, i.e. the locally profinite subgroup of $\Gal(\bar{F}/F)$ generated by the inertia group $I_F=\Gal(\bar{F}/F^\ur)$ and some fixed choice $\Frob$ of Frobenius element. Let $I_F^+$ denote the wild inertia group, i.e. the maximal pro-$p$ open normal subgroup of $I_F$. The \emph{tame inertia group} is then the quotient $I_F^\tame=I_F/I_F^+$.\\

The $F$-group $\GGG$ is classified by its based root datum, together with the action of $W_F$ on this datum. Under the natural duality on root data induced from the root-coroot duality, $\GGG$ then corresponds to a unique connected reductive algebraic group $\hat{\GGG}$ defined over $\C$. Moreover, the natural action of $W_F$ on the root datum of $\GGG$ then defines an action of $W_F$ on that of $\hat{\GGG}$, and hence on $\hat{\GGG}$ itself. The \emph{Langlands dual group} of $G$ is then defined to be the semidirect product $^L\GGG=W_F\ltimes\hat{\GGG}$. Since the action of $W_F$ will always factor through a finite quotient, there is an identification of $^L\GGG$ with a reductive group defined over $\C$, the component group of which is finite; we will implicitly identify $^L\GGG$ with a group of this form throughout the following discussion. An \emph{$L$-parameter} for $G$ is then a homomorphism $W_F\times\SL2(\C)\rightarrow{}^L\GGG$ which is continuous upon restriction to $W_F$, algebraic upon restriction to $\SL2(\C)$, and such that the image of $\Frob$ is a semisimple element. Say that two $L$-parameters are equivalent if they are conjugate in $^L\GGG$, and denote by $\Ll(G)$ the set of equivalence classes of $L$-parameters for $G$. The local Langlands conjectures then predict that there should exist a unique surjective, finite-to-one map $\Irr(G)\rightarrow\Ll(G)$ satisfying a number of natural properties. The fibres of this map then partition $\Irr(G)$ into finite sets, which we call $L$-\emph{packets}.\\

We will be interested in certain classes of $L$-parameters. Say that an $L$-parameter $\varphi$ is \emph{discrete} if its image is not contained in any proper Levi subgroup of $^L\GGG$, and \emph{tame} if the restriction of $\varphi$ to $I_F^+$ is the trivial homomorphism. We note that while an $L$-packet should contain a discrete series representation of $G$ if and only if it contains only discrete series representations (which should occur precisely when the corresponding $L$-parameter is discrete). We emphasize it is \emph{not} the case that an $L$-packet containing a supercuspidal representation should consist only of supercuspidal representations. On the other hand, however, the elements of an $L$-packet should all be of the same depth, and an $L$-packet should contain a depth-zero representation if and only if the corresponding $L$-parameter is tame. If $\varphi$ is a discrete parameter, then we say that $\varphi$ is \emph{regular} if the action of $\SL2(\C)$ is trivial; thus we can (and will) identify regular $L$-parameters with homomorphisms $W_F\rightarrow{}^L\GGG$.\\

In \cite{debacker2009depthzero}, for a group $G$ satisfying Hypothesis \ref{unramified-hypothesis}, DeBacker and Reeder associate a finite $L$-packet of depth zero supercuspidal representations to a large number of the tame discrete $L$-parameters for $G$. We now give a  brief recap of the relevant parts of this construction.\\

We will be interested in \emph{tame regular semisimple elliptic $L$-parameters} (or TRSELPs for short). These are tame regular parameters of a specific form. Since $W_F$ is topologically generated by $I_F$ and $\Frob$, a tame regular parameter $\varphi:W_F\rightarrow{}^L\GGG$ is completely determined by the data of a homomorphism $I_F^\tame\rightarrow{}^L\GGG$ representing the action of $I_F$ (which factors through $I_F^\tame$ since $\varphi$ is tame) and an element $f\in{}^L\GGG$ representing the action of $\Frob$.

\begin{definition}\label{TRSELP}
A \emph{tame regular semisimple elliptic $L$-parameter} (TRSELP) is a pair $\varphi=(s,f)$ consisting of:
\begin{enumerate}[(i)]
\item a continuous homomorphism $s:I_F^\tame\rightarrow\hat{\TTT}$ for some maximal torus $\hat{\TTT}$ in $^L\GGG$ satisfying $C_{^L\GGG}(\hat{\TTT})=\hat{\TTT}$; and
\item an element $f\in\hat{N}=N_{^L\GGG}(\hat{\TTT})$ satisfying certain conditions which will not be important for our purposes; see \cite[Section 4.1]{debacker2009depthzero} for the details.
\end{enumerate}
We denote by $\Ll_{\tame,\reg}(G)$ the set of equivalence classes of TRSELPs.
\end{definition}

For the remainder of the paper, since we will need to define a large number of classes of objects via TRSELPs, for readability we will abuse terminology slightly and drop the adjective ``semisimple elliptic'' from our notation. In the case that the centre of $\GGG$ is connected, the TRSELPs should be precisely those tame regular parameters corresponding to $L$-packets which consist \emph{only} of depth-zero supercuspidal representations. In the case that $\GGG$ does not have a connected centre, then the TRSELPs should be the tame regular parameters corresponding to $L$-packets consisting of supercuspidal representations which are generic in some sense --- for example, when $F$ is of odd residual characteristic, then the $L$-packets in $\Irr(\SL2(F))$ which contain depth-zero supercuspidals generically contain two elements; there exists a unique such packet containing four elements (corresponding to the unique representation of $\GL2(\kkk_F)$ which restricts reducibly to $\SL2(\kkk_F)$), the parameter of which is \emph{not} a TRSELP.

\begin{definition}
A \emph{tame regular inertial type} for $G$ is the restriction to $I_F$ of an element of $\Ll_{\tame,\reg}(G)$. Note that this is equivalent to defining a tame regular inertial type to be a homomorphism $s:I_F^\tame\rightarrow{}^L\GGG$ as in Definition \ref{TRSELP}. Say that two tame regular inertial types $s,s'$ are equivalent if there exist choices $f,f'$ such that the TRSELPs $(s,f)$ and $(s',f')$ are equivalent, and denote by $\Ii_{\tame,\reg}(G)$ the set of equivalence classes of tame regular inertial types.
\end{definition}

In particular, a TRSELP consists precisely of the data of an underlying tame regular inertial type together with a compatible action of $\Frob$; thus we get a well-defined surjective map $\Res_{I_F}^{W_F}:\Ll_{\tame,\reg}(G)\rightarrow\Ii_{\tame,\reg}(G)$. With this in place, we are ready to sketch out the construction of the $L$-packet associated to a TRSELP. Fix a choice $\varphi=(s,f)\in\Ll_{\tame,\reg}(G)$ of TRSELP. Let $\TTT\subset\GGG$ be the maximal split torus, and write $X=X_*(\TTT)$. Let $\hat{\vartheta}$ denote the automorphism of $^L\GGG$ arising from the action of $\Frob$ (via the action of $W_F$ on $^L\GGG$ inherited from the action of $W_F$ on the root datum of $\GGG$). This automorphism $\hat{\vartheta}$ then gives a dual automorphism $\vartheta$ of $X$. Moreover, upon restriction to $\hat{\TTT}$, the element $\varphi(\Frob)$ of $^L\GGG$ normalizes $\hat{\TTT}$ and acts by an element of the form $\hat{\vartheta}\hat{w}$, for some $\hat{w}$ in the Weyl group of $\hat{\TTT}$. This element $\hat{w}$ then also gives a dual automorphism $w$ of $X$. We denote by $X_w$ the pre-image in $X$ of $[X/(1-w\vartheta)X]_{\mathrm{tors}}$.\\

Now fix a choice of $\lambda\in X_w$. To $\lambda$, one may associate a certain 1-cocycle $u_\lambda$ \cite[Section 2.7]{debacker2009depthzero}; the twisted Frobenius $F_\lambda=\mathrm{Ad}(u_\lambda)\circ\Frob$ acts on the apartment $\mathscr{A}(G,\TTT)$ and stabilizes a unique vertex $x_\lambda$; we therefore obtain for each $\lambda\in X_w$ a maximal parahoric subgroup $G_\lambda$ of $G$.\\

Moreover, by the local Langlands correspondence for tori (which is well-known, but reproved in the depth zero setting in \cite[Section 4.3]{debacker2009depthzero}), the homomorphism $s:I_F^\tame\rightarrow\hat{\TTT}$ corresponds to a character of $\TTT(F)$. DeBacker and Reeder associate to $(\varphi,\lambda)$ a group $T_\lambda=\TTT_\lambda(F)$ of $F$-points of a certain conjugate $\TTT_\lambda$ of $\TTT$, and hence a character $\theta_\lambda$ of $T_\lambda$. This character will be of depth-zero in the sense that it will be trivial on $T_\lambda\cap G_\lambda^+$, but non-trivial on $T_\lambda\cap G_\lambda$. In particular, we may identify $\theta_\lambda$ with a character of the torus in the reductive quotient $G_\lambda/G_\lambda^+$ obtained as the image of $T_\lambda\cap G_\lambda$. Deligne---Lusztig theory then gives a virtual representation $R_{T_\lambda}^{\theta_\lambda}$ of $G_\lambda/G_\lambda^+$. At this point, the ``genericity'' property of a TRSELP (as opposed to an arbitrary tame regular $L$-parameter) guarantees that the character $\theta_\lambda$ is in general position, so that one of $\pm R_{T_\lambda}^{\theta_\lambda}$ will be an irreducible cuspidal representation $\sigma_\lambda$ of $G_\lambda/G_\lambda^+$. We therefore obtain an unrefined depth-zero type $(G_\lambda,\sigma_\lambda)$.\\

At this point, the elements of $\Irr(G)$ which arise as subquotients of $\cInd_{G_\lambda}^G\ \sigma_\lambda$ for some TRSELP $\varphi$ and some $\lambda\in X_w$ are grouped into $L$-packets $\Pi(\varphi)$ according to, in particular, the following rules:
\begin{enumerate}[(i)]
\item Every element $\lambda$ of $X_w$ determines an unrefined depth zero type $(G_\lambda,\sigma_\lambda)$ in some pure inner form of $G$ and, as $\lambda$ ranges over the subset of $X_w$ consisting of those $\lambda$ which give rise to an unrefined depth zero type in $G$, for each $\lambda$ there exists a unique element $\pi_\lambda$ of $\Pi(\varphi)$ containing the unrefined depth zero type $(G_\lambda,\sigma_\lambda)$. In other words, the elements of $\Pi(\varphi)$ are precisely a choice of irreducible subquotient of $\cInd_{G_\lambda}^G\ \sigma_\lambda$ for each $\lambda$; this choice is determined by $f$, and will turn out to be irrelevant for our purposes.
\item As $\varphi$ varies over all TRSELPs of a given inertial type, the union of the packets $\Pi(\varphi)$ is equal to the set of irreducible subquotients of the representation $\cInd_{G_\lambda}^G\ \sigma_\lambda$, as $\lambda$ varies. Moreover, if $f,f'$ are such that $\Pi(s,f)\cap\Pi(s,f')\neq\emptyset$, then the TRSELPs $(s,f)$ and $(s,f')$ are equivalent.
\end{enumerate}
Moreover, DeBacker and Reeder show that this process results in $L$-packets which are \emph{stable} in a technical, character-theoretic sense, and that these are the smallest such stable $L$-packets. Thus, while they do not show that the resulting correspondence satisfies all of the conditions which are expected of the local Langlands correspondence, it is extremely likely that it is the correct such correspondence.

\begin{hypothesis}
We assume that the assignment of an $L$-packet of depth-zero supercuspidals to each TRSELP given by DeBacker and Reeder satisfies all of the expected properties of the local Langlands correspondence.
\end{hypothesis}

\begin{definition}
Say that a representation of $G$ is \emph{tame regular (semisimple elliptic)} if it is contained in $\Pi(\varphi)$ for some TRSELP $\varphi$. Denote by $\Irr_{\tame,\reg}(G)$ the set of isomorphism classes of tame regular representations of $G$.
\end{definition}
Note that a tame regular representation is necessarily an irreducible depth-zero supercuspidal representation of $G$.\\

We realize the DeBacker--Reeder construction as a surjective, finite-to-one map $\rec:\Irr_{\tame,\reg}(G)\rightarrow\Ll_{\tame,\reg}(G)$ which assigns to each $\pi\in\Irr_{\tame,\reg}(G)$ the unique $\varphi\in\Ll_{\tame,\reg}(G)$ such that $\pi$ is contained in $\Pi(\varphi)$. As an immediate consequence of properties (i) and (ii) above of the $L$-packets $\Pi(\varphi)$, we immediately obtain the following:

\begin{lemma}\label{restriction-to-inertia-TRSELP}
Let $\pi,\pi'\in\Irr_{\tame,\reg}(G)$. Then $\rec(\pi)$ and $\rec(\pi')$ are of the same inertial type if and only if $\pi$ and $\pi'$ contain a common unrefined depth-zero type.
\end{lemma}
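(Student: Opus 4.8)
The plan is to unwind the two properties (i) and (ii) of the DeBacker--Reeder $L$-packets and translate them into statements about unrefined depth-zero types. Recall that the key constructions attach to each TRSELP $\varphi=(s,f)$ and each $\lambda\in X_w$ an unrefined depth-zero type $(G_\lambda,\sigma_\lambda)$, where both the torus $T_\lambda$ and the point $x_\lambda$ (hence the parahoric $G_\lambda$ and the Deligne--Lusztig representation $\sigma_\lambda$) are built from $\lambda$ together with the Weyl element $w$, which in turn depends only on the inertial type $s$ (it records the coset of $\varphi(\mathrm{Frob})$ modulo $\hat{\TTT}$, and changing $f$ within a fixed $s$ does not change $w$, only the resulting choice of subquotient). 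Likewise, the character $\theta_\lambda$ depends only on $s$ (via the Langlands correspondence for tori) and $\lambda$. Thus the collection of unrefined depth-zero types $\{(G_\lambda,\sigma_\lambda) : \lambda\in X_w\}$ that arise in $G$ depends only on the inertial type, not on $f$.

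With that observation in hand, the proof splits into the two implications. For the ``only if'' direction, suppose $\rec(\pi)=(s,f)$ and $\rec(\pi')=(s,f')$ have the same inertial type $s$. By property (i), $\pi$ contains some $(G_\lambda,\sigma_\lambda)$ for a $\lambda\in X_w$ giving rise to a type in $G$, and similarly $\pi'$ contains some $(G_{\lambda'},\sigma_{\lambda'})$. I would then invoke property (ii): as $f$ ranges over all completions of $s$ to a TRSELP, the union $\bigcup_f \Pi(s,f)$ is exactly the set of irreducible subquotients of $\cInd_{G_\lambda}^G\ \sigma_\lambda$ as $\lambda$ ranges over $X_w$; moreover the types $(G_\lambda,\sigma_\lambda)$ are themselves all $G$-conjugate to one another (they are obtained from a single $\TTT$-conjugacy class by twisting by $X_w$, and DeBacker--Reeder show that a fixed $\varphi$ produces a \emph{single} $L$-packet with each member containing exactly one of these types, so any two $(G_\lambda,\sigma_\lambda)$ realized in $G$ for the same $s$ are conjugate). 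Hence $(G_{\lambda'},\sigma_{\lambda'})$ is $G$-conjugate to $(G_\lambda,\sigma_\lambda)$, and by Theorem \ref{unrefined-unicity} the unique unrefined depth-zero type contained in $\pi'$ is conjugate to the one contained in $\pi$ — so $\pi$ and $\pi'$ share an unrefined depth-zero type.

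For the ``if'' direction, suppose $\pi$ and $\pi'$ both contain a common unrefined depth-zero type $(G_x,\sigma)$ (up to conjugacy; by Theorem \ref{unrefined-unicity} this type is unique for each, so the hypothesis really is that these coincide). Write $\rec(\pi)=(s,f)$. Since $\pi\in\Pi(s,f)$, property (i) identifies $(G_x,\sigma)$ with $(G_\lambda,\sigma_\lambda)$ for some $\lambda\in X_w$ attached to $s$; in particular the pair $(x_\lambda,\theta_\lambda)$ recovers $(G_x,\sigma)$. Now $\pi'\in\Pi(s',f')$ for $\rec(\pi')=(s',f')$, and $\pi'$ contains the same $(G_\lambda,\sigma_\lambda)$; by property (ii) applied to the inertial type $s'$, $\pi'$ is a subquotient of $\cInd_{G_{\mu}}^G\ \sigma_\mu$ for some $\mu\in X_{w'}$ with $(G_\mu,\sigma_\mu)$ conjugate to $(G_\lambda,\sigma_\lambda)$ (again using Theorem \ref{unrefined-unicity} to see that the type occurring in $\pi'$ is unique). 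The point is then that the unrefined depth-zero type $(G_\lambda,\sigma_\lambda)$ determines, via the inverse Deligne--Lusztig construction and the local Langlands correspondence for tori, the pair $(T_\lambda,\theta_\lambda)$ up to conjugacy, and hence determines $s$ up to equivalence of inertial types; so $s'$ is equivalent to $s$, i.e. $\rec(\pi)$ and $\rec(\pi')$ have the same inertial type.

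The main obstacle is the direction ``common type $\Rightarrow$ same inertial type'': it requires knowing that the passage from $(G_\lambda,\sigma_\lambda)$ back to the inertial type $s$ is well-defined, i.e. that the cuspidal representation $\sigma_\lambda$ of $G_\lambda/G_\lambda^+$ together with the conjugacy class of the vertex $x_\lambda$ pins down, up to the equivalence in $\Ii_{\tame,\reg}(G)$, the homomorphism $s:I_F^\tame\to\hat{\TTT}$. This amounts to reversing the DeBacker--Reeder recipe: one recovers from $\pm\sigma_\lambda = R_{T_\lambda}^{\theta_\lambda}$ the Deligne--Lusztig datum $(T_\lambda,\theta_\lambda)$ (using that $\theta_\lambda$ is in general position, so this datum is unique up to geometric conjugacy), then uses the torus correspondence to read off $s$, and finally checks that the remaining ambiguity — the choice of $\lambda$ within $X_w$, i.e. the twisting that permutes the various $(G_\lambda,\sigma_\lambda)$ — is exactly absorbed by the equivalence relation defining $\Ii_{\tame,\reg}(G)$. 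I expect this bookkeeping, rather than any new idea, to be the technical heart of the argument; everything else follows formally from properties (i), (ii) and Theorem \ref{unrefined-unicity}.
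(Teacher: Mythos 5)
Your ``only if'' argument has a genuine gap. You assert that the unrefined depth-zero types $(G_\lambda,\sigma_\lambda)$ attached to a fixed inertial type $s$ are all $G$-conjugate, on the grounds that ``DeBacker--Reeder show that a fixed $\varphi$ produces a single $L$-packet with each member containing exactly one of these types.'' That inference is a non-sequitur: each member of $\Pi(\varphi)$ containing a unique conjugacy class of unrefined depth-zero types is entirely compatible with different members of $\Pi(\varphi)$ containing \emph{non-conjugate} types, and that is in fact what happens. For $G=\SL2(F)$, for instance, both $G$-conjugacy classes of maximal parahoric subgroups arise as $G_\lambda$'s for a single TRSELP, and the DeBacker--Reeder packet generically contains one supercuspidal at each of the two vertices; those two representations manifestly do not share an unrefined depth-zero type. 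If your conjugacy claim were correct, the packet would collapse to a singleton, which it does not. (This is not easily patched; it suggests the ``only if'' direction of the lemma is stated more strongly than is warranted. Note that in the proof of Theorem \ref{tame-inertial-correspondence} only the ``if'' implication is ever invoked, together with a separate bijectivity argument drawn directly from DeBacker--Reeder.)

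Your ``if'' argument is correct but takes a more laborious route than the paper intends by ``immediate consequence of (i) and (ii).'' You reduce to the claim that $(G_\lambda,\sigma_\lambda)$ recovers the inertial type $s$ via an inverse Deligne--Lusztig construction plus the Langlands correspondence for tori, and you flag this reversal as the technical heart. It can be avoided entirely. If $\pi$ and $\pi'$ both contain $(G_x,\sigma)$, both are irreducible subquotients of $\cInd_{G_x}^G\ \sigma$. Write $\rec(\pi)=(s,f)$; by property (i) the pair $(G_x,\sigma)$ is $G$-conjugate to $(G_\lambda,\sigma_\lambda)$ for some $\lambda$ attached to $s$, and by property (ii) applied to $s$, the set of all irreducible subquotients of the $\cInd_{G_\lambda}^G\ \sigma_\lambda$ is precisely $\bigcup_{f'}\Pi(s,f')$. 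Hence $\pi'\in\Pi(s,f')$ for some $f'$, and since the packets partition $\Irr_{\tame,\reg}(G)$ (this is exactly what makes $\rec$ well-defined), $\rec(\pi')=(s,f')$ has inertial type $s$. No Deligne--Lusztig reversal is needed; this is the sense in which the statement follows formally from (i) and (ii).
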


\section{The tame inertial Langlands correspondence}

Having described the local Langlands correspondence for $\Irr_{\tame,\reg}(G)$, we describe a similar correspondence on the level of types and inertial types.\\

Denote by $\Dd_{\tame,\reg}(G)$ the set of conjugacy classes of unrefined depth-zero types which are contained in some element of $\Irr_{\tame,\reg}(G)$, and by $\Aa_{\tame,\reg}(G)$ the set of $[G,\pi]_G$-archetypes, as $\pi$ ranges through $\Irr_{\tame,\reg}(G)$. Since, by Theorem \ref{unrefined-unicity}, each such $[G,\pi]_G$-archetype $(K,\tau)$ restricts to the unique conjugacy class of maximal parahoric subgroups contained in the conjugacy class of $K$ as a direct sum of pairwise $G$-conjugate unrefined depth-zero types, there is a canonical surjective map $\Aa_{\tame,\reg}(G)\rightarrow\Dd_{\tame,\reg}(G)$. Moreover, since $\pi\in\Irr_{\tame,\reg}(G)$ contains a unique element of $\Dd_{\tame,\reg}(G)$ and a unique element of $\Aa_{\tame,\reg}(G)$, there are also canonical surjective maps $T:\Irr_{\tame,\reg}(G)\rightarrow\Aa_{\tame,\reg}(G)$ and $D:\Irr_{\tame,\reg}(G)\rightarrow\Dd_{\tame,\reg}(G)$; it is clear that $D$ factors through $T$ via canonical map $\Aa_{\tame,\reg}(G)\rightarrow\Dd_{\tame,\reg}(G)$.

\begin{theorem}[The tame inertial Langlands correspondence]\label{tame-inertial-correspondence}
Let $G$ be a pure inner form of an unramified $p$-adic group.
\begin{enumerate}[(i)]
\item There exists a unique surjective, finite-to-one map $\inerP:\Dd_{\tame,\reg}(G)\rightarrow\Ii_{\tame,\reg}(G)$ such that the following diagram commutes:
\[\xymatrix{
\Irr_{\tame,\reg}(G)\ar[r]^-\rec\ar[d]_D & \Ll_{\tame,\reg}(G)\ar[d]^{\Res_{I_F}^{W_F}}\\
\Dd_{\tame,\reg}(G)\ar[r]_-\inerP & \Ii_{\tame,\reg}(G)
}\]
Given $\varphi\in\Ll_{\tame,\reg}(G)$, one has $\#\inerP^{-1}(\varphi|_{I_F})=\#\rec^{-1}(\varphi)$.
\item There exists a unique surjective, finite-to-one map $\iner:\Aa_{\tame,\reg}(G)\rightarrow\Ii_{\tame,\reg}(G)$ such that the following diagram commutes;
\[\xymatrix{
\Irr_{\tame,\reg}(G)\ar[r]^-\rec\ar[d]_T & \Ll_{\tame,\reg}(G)\ar[d]^{\Res_{I_F}^{W_F}}\\
\Aa_{\tame,\reg}(G)\ar[r]_-\iner & \Ii_{\tame,\reg}(G)
}\]
The map $\iner$ factors uniquely through $\inerP$ via the canonical map $\Aa_{\tame,\reg}(G)\rightarrow\Dd_{\tame,\reg}(G)$ and, given $\varphi\in\Ll_{\tame,\reg}(G)$, one has
\[\#\iner^{-1}(\varphi|_{I_F})=\sum_{(G_x,\sigma)\in\inerP^{-1}(\varphi|_{I_F})	}\#\SS_\sigma.
\]
\end{enumerate}
\end{theorem}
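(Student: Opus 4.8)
The plan is to build $\inerP$ by transporting $\rec$ through the vertical maps, and then to obtain $\iner$ as $\inerP$ composed with the canonical map $r\colon\Aa_{\tame,\reg}(G)\to\Dd_{\tame,\reg}(G)$. For (i): given a class $(G_x,\sigma)\in\Dd_{\tame,\reg}(G)$, choose any $\pi\in\Irr_{\tame,\reg}(G)$ containing it and set $\inerP((G_x,\sigma))=\rec(\pi)|_{I_F}$. Lemma \ref{restriction-to-inertia-TRSELP} is exactly what is needed to see that this does not depend on the choice of $\pi$, since two tame regular representations sharing the unrefined depth-zero type $(G_x,\sigma)$ have $\rec$-images of equal inertial type. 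Commutativity of the first square is then immediate from the construction, uniqueness of $\inerP$ holds because $D$ is surjective, and surjectivity of $\inerP$ follows from surjectivity of $\rec$ and of $\Res_{I_F}^{W_F}$ together with the commutativity.

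For the fibre count in (i), fix $\varphi\in\Ll_{\tame,\reg}(G)$ and put $s=\varphi|_{I_F}$. Applying Lemma \ref{restriction-to-inertia-TRSELP} in both directions, together with the fact that a depth-zero supercuspidal contains a unique $G$-conjugacy class of unrefined depth-zero types (so that $D(\pi)=D(\pi')$ is equivalent to $\pi$ and $\pi'$ sharing an unrefined type), one identifies $\inerP^{-1}(s)$ with $D(\rec^{-1}(\varphi))=D(\Pi(\varphi))$. Property (i) of the DeBacker--Reeder packets --- that within $\Pi(\varphi)$ each unrefined depth-zero type is contained in exactly one member --- says precisely that $D$ is injective on $\Pi(\varphi)$. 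Hence $D$ restricts to a bijection $\rec^{-1}(\varphi)\to\inerP^{-1}(s)$, which gives both finiteness of the fibre and the equality $\#\inerP^{-1}(\varphi|_{I_F})=\#\rec^{-1}(\varphi)$.

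For (ii) I would set $\iner=\inerP\circ r$. Since $D=r\circ T$ (the factorisation recorded just before the theorem), the second square commutes, and surjectivity, uniqueness and the asserted factorisation through $\inerP$ are formal consequences of the surjectivity of $T$ and of $r$. The fibre formula reduces, via the disjoint decomposition $\iner^{-1}(s)=\coprod_{(G_x,\sigma)\in\inerP^{-1}(s)}r^{-1}((G_x,\sigma))$, to the claim $\#r^{-1}((G_x,\sigma))=\#\SS_\sigma$. To prove this I would first identify $r^{-1}((G_x,\sigma))$ with the set of $G$-conjugacy classes of refinements of $(G_x,\sigma)$, i.e.\ irreducible subrepresentations of $\Ind_{G_x}^K\sigma$ for $K$ the maximal compact subgroup of $N_G(G_x)$: by Theorem \ref{morris-refinement} every such refinement is an $\frak{s}$-type for some $\frak{s}\in\SS_\sigma$ and hence, via a supercuspidal member $\pi_{\frak s}$ of $\Irr^{\frak{s}}(G)\subseteq\Irr_{\tame,\reg}(G)$ (the inclusion coming from DeBacker--Reeder property (ii)), lies in $\Aa_{\tame,\reg}(G)$ over $(G_x,\sigma)$; conversely Theorem \ref{refined-unicity} together with Clifford theory forces any archetype lying over $(G_x,\sigma)$ to be of this shape. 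Finally Theorem \ref{morris-refinement} supplies a surjection from conjugacy classes of refinements onto $\SS_\sigma$ (well-defined since the inertial class of which a given refinement is a type is determined by the refinement), and it is injective: two refinements that are $\frak{s}$-types for the same $\frak{s}$ are both $[G,\pi_{\frak s}]_G$-archetypes for the supercuspidal $\pi_{\frak s}\in\Rep^{\frak{s}}(G)$, hence $G$-conjugate by the unicity Theorem \ref{refined-unicity}.

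The main obstacle is not a single hard estimate but rather the verification that the surjections furnished by Morris's refinement theorem and by the DeBacker--Reeder packet properties are genuinely bijections: the two injectivity statements --- that $D$ is injective on each packet $\Pi(\varphi)$, and that inequivalent elements of $\SS_\sigma$ yield non-conjugate refinements --- are exactly where the unicity results of this paper (Theorems \ref{unrefined-unicity} and \ref{refined-unicity}) and the explicit structure of the DeBacker--Reeder construction must be used, and one must be careful to keep the various conjugacy conventions ($G$- versus $K$-conjugacy of refinements, standardisation of parahoric subgroups) consistent throughout.
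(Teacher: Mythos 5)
Your argument is correct and follows essentially the same route as the paper, using the same key ingredients: Lemma \ref{restriction-to-inertia-TRSELP} for well-definedness, the DeBacker--Reeder packet properties for the bijection $\rec^{-1}(\varphi)\cong\inerP^{-1}(\varphi|_{I_F})$, and Theorems \ref{morris-refinement} and \ref{refined-unicity} to identify the fibres of $r$ over each unrefined type with $\SS_\sigma$. The only difference is organizational: the paper defines $\iner$ first, via a section of $T$, and then sets $\inerP=\iner\circ A$ for a section $A$ of $r$, whereas you define $\inerP$ directly via a section of $D$ and then recover $\iner=\inerP\circ r$; your ordering is slightly tidier in that it invokes Lemma \ref{restriction-to-inertia-TRSELP} only once, but the mathematical content is identical.
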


\begin{proof}
Let $R:\Aa_{\tame,\reg}(G)\rightarrow\Irr_{\tame,\reg}(G)$ be \emph{any} map which, to an archetype $(K,\tau)$, assigns an irreducible subquotient of $\cInd_K^G\ \tau$; we then define $\iner=\Res_{I_F}^{W_F}\circ\rec\circ R$. This is well-defined: since $(K,\tau)$ is a $[G,\pi]_G$-archetype for some $\pi\in\Irr_{\tame,\reg}(G)$, any two subquotients of $\cInd_K^G\ \tau$ are unramified twists of one another; however, given an unramified character $\omega$ of $G$, restricting to $I_F$ induces an isomorphism between $\rec(\pi)|_{I_F}$ and $\rec(\pi\otimes\omega)|_{I_F}$. It follows immediately that $\iner$ is the unique map $\Aa_{\tame,\reg}(G)\rightarrow\Ii_{\tame,\reg}(G)$ such that the diagram in \emph{(ii)} commutes. Since $\rec$ and $\Res_{I_F}^{W_F}$ are surjective, it follows that $\iner$ is also surjective.\\

We now establish \emph{(i)}. Fix a map $A:\Dd_{\tame,\reg}(G)\rightarrow\Aa_{\tame,\reg}(G)$ which, to each unrefined depth-zero type $(G_x,\sigma)$, assigns an irreducible subrepresentation of $\Ind_{G_x}^K\ \sigma$, where $K$ denotes the maximal compact subgroup of $N_G(G_x)$. Then we define $\inerP:\Dd_{\tame,\reg}(G)\rightarrow\Ii_{\tame,\reg}(G)$ by setting $\inerP=\iner\circ A$. We must first check that $\inerP$ is well-defined. Let $A':\Dd_{\tame,\reg}(G)\rightarrow\Aa_{\tame,\reg}(G)$ be another such map, and let $\inerP'=\iner\circ A'$. Then we have a diagram
\[\xymatrix{
& \Irr_{\tame,\reg}(G)\ar[r]^-\rec\ar[d]_T\ar@/_2pc/[ddl]_D & \Ll_{\tame,\reg}(G)\ar[d]^{\Res_{I_F}^{W_F}}\\
& \Aa_{\tame,\reg}(G)\ar[r]_-\iner & \Ii_{\tame,\reg}(G)\\
\Dd_{\tame,\reg}(G)\ar@/^/[ru]^A\ar@/_/[ru]_{A'}\ar@/_1pc/[rru]^{\inerP'}\ar@/_2pc/[rru]_{\inerP}
}\]
We have seen that the top square and the leftmost triangle commute, while in the bottom triangle $\inerP=\iner\circ A$ and $\inerP'=\iner\circ A'$ by definition. Suppose that $\inerP\neq\inerP'$, and pick $(G_x,\sigma)\in\Dd_{\tame,\reg}(G)$ such that $\inerP(G_x,\sigma)\neq\inerP'(G_x,\sigma)$. Hence $A(G_x,\sigma)$ and $A'(G_x,\sigma)$ must lie in different fibres of $\iner$, which is to say that they must lie in different fibres of $\Res_{I_F}^{W_F}\circ\rec\circ R$. This means that $\cInd_K^G\ A(G_x,\sigma)$ and $\cInd_K^G\ A'(G_x,\sigma)$ admit irreducible subquotients lying in different fibres of $\Res_{I_F}^{W_F}\circ\rec$. Hence there exist irreducible subquotients $\rho,\rho'$, say, of $\cInd_{G_x}^G\ \sigma$ which lie in different fibres of $\Res_{I_F}^{W_F}\circ\rec$. By Lemma \ref{restriction-to-inertia-TRSELP} this is not the case, and so $\inerP$ is well-defined.\\

Since $\inerP$ is well-defined, $\iner$ factors uniquely through $\inerP$ via the canonical map $\Aa_{\tame,\reg}(G)\rightarrow\Dd_{\tame,\reg}(G)$. Let $\varphi\in\Ll_{\tame,\reg}(G)$. We claim that the fibres $\inerP^{-1}(\varphi|_{I_F})$ and $\rec^{-1}(\varphi)$ are in canonical bijection. There is a canonical injective map between these two sets. Indeed, given $(G_x,\sigma)\in\inerP^{-1}(\varphi|_{I_F})$, there exists a unique element of $\rec^{-1}(\varphi)$ containing $(G_x,\sigma)$, which can be observed by combining \cite[Lemma 4.5.2 (3)]{debacker2009depthzero} and \cite[Theorem 4.5.3]{debacker2009depthzero}. This gives a map $\inerP^{-1}(\varphi|_{I_F})\rightarrow\rec^{-1}(\varphi)$, which is injective since a depth-zero irreducible representation of $G$ contains a unique conjugacy class of unrefined depth-zero types. Moreover, this map is clearly seen to be surjective: given a TRSELP $\varphi$ and an element $\pi$ of $\rec^{-1}(\varphi)$, we have noted that $\pi$ contains an unrefined depth zero type which, by the commutativity of the above diagram, is contained in $\iner_\Dd^{-1}(\varphi|_{I_F})$.\\

Finally, since $\iner$ factors uniquely through $\inerP$, it follows that the elements of the fibre $\iner^{-1}(\varphi|_{I_F})$ are precisely the irreducible subrepresentations of the representations $\Ind_{G_x}^{K_x}\ \sigma$ (modulo $G$-conjugacy), where $(G_x,\sigma)$ ranges through the elements of $\inerP^{-1}(\varphi|_{I_F})$ and $K_x$ denotes the maximal compact subgroup of $N_G(G_x)$. Each subrepresentation $\tau$ of some $\Ind_{G_x}^{K_x}\ \sigma$ is an $\frak{s}$-type, for some $\frak{s}\in\SS_\sigma$. We have already seen that there is a unique $\frak{s}$-type (up to conjugacy) for each $\frak{s}$; it follows that
\[\#\iner^{-1}(\varphi|_{I_F})=\sum_{(G_x,\sigma)\in\inerP^{-1}(\varphi|_{I_F})}\#\SS_\sigma,
\]
as desired.
\end{proof}

\begin{remarks}
\begin{enumerate}[(i)]
\item In \cite{macdonald1980zetafunctions}, Macdonald constructs a ``Langlands correspondence'' for certain finite reductive groups, via $p$-adic methods. In particular, this provides --- for certain groups --- a correspondence between cuspidal irreducible representations of finite reductive groups and certain homomorphisms from the inertia group to complex tori. One interpretation of the above result is that it shows that the DeBacker--Reeder construction subsumes Macdonald's result, providing via the map $\inerP$ a similar correspondence.
\item In the case that the maximal parahoric subgroups of $G$ are all maximal as compact subgroups of $G$ (as happens, for example, when $\GGG$ is semisimple and simply connected) the sets $\Aa_{\tame,\reg}(G)$ and $\Dd_{\tame,\reg}(G)$ and the maps $\iner$ and $\inerP$ coincide; in particular, in this case the description of the inertial correspondence consists simply of the statement \emph{(i)} above.
\item The reader should note that the definitions of $\iner$ and $\inerP$ do \emph{not} rely on our unicity results: one  can see that these are the unique well-defined surjective maps making the appropriate diagrams commute without knowing anything about unicity. However, the real strength of the above result is in the description of the fibres of these maps, for which our unicity results are crucial.
\item An undesirable aspect of our result is that we only obtain a description for $\Irr_{\tame,\reg}(G)$. There should exist a more general correspondence, but there are a number of obstacles preventing the proof of such a result. Firstly, and most significantly, we do not yet have a construction of the local Langlands correspondence for arbitrary depth-zero irreducible representations of $G$; in particular, this prevents the observation that any map analogous to $\inerP$ is well-defined from being made. On top of this, there are two further serious complications. Firstly, the relationship between depth-zero types and archetypes becomes far more complicated, meaning that describing the fibres of $\iner$ via such a simple formula is unlikely to be possible. For example, in $\GL2(F)$, one already sees that while the Steinberg representation contains a unique unrefined depth-zero type (the trivial representation of the Iwahori subgroup), it contains two archetypes: the trivial representation of $\GL2(\dedekind)$, and the inflation to $\GL2(\dedekind)$ of the Steinberg representation of $\GL2(\kkk)$. Finally, there are additional complications in the proof of unicity which would require methods distinct to those employed in this paper. It is likely that our methods would suffice only to show that any typical representation of a maximal parahoric subgroup of $G$ is contained in a certain infinite length representation corresponding to the trivial Mackey summand here. Once one considers representations with supercuspidal support defined on a proper parabolic subgroup, this summand is no longer irreducible.
\end{enumerate}
\end{remarks}

\bibliographystyle{amsalpha}
\addcontentsline{toc}{chapter}{Bibliography}
\bibliography{depthzero}

\def\cprime{$'$}
\providecommand{\bysame}{\leavevmode\hbox to3em{\hrulefill}\thinspace}
\providecommand{\MR}{\relax\ifhmode\unskip\space\fi MR }
% \MRhref is called by the amsart/book/proc definition of \MR.
\providecommand{\MRhref}[2]{%
  \href{http://www.ams.org/mathscinet-getitem?mr=#1}{#2}
}
\providecommand{\href}[2]{#2}
\begin{thebibliography}{Mac80}

\bibitem[Ber84]{bernstein1984centre}
J.~N. Bernstein, \emph{Le ``centre'' de {B}ernstein}, Representations of
  reductive groups over a local field, Travaux en Cours, Hermann, Paris, 1984,
  Edited by P. Deligne, pp.~1--32. \MR{771671 (86e:22028)}

\bibitem[BK98]{bushnell1998structure}
Colin~J. Bushnell and Philip~C. Kutzko, \emph{Smooth representations of
  reductive {$p$}-adic groups: structure theory via types}, Proc. London Math.
  Soc. (3) \textbf{77} (1998), no.~3, 582--634. \MR{1643417 (2000c:22014)}

\bibitem[BM02]{breuil2002multiplicites}
Christophe Breuil and Ariane M{\'e}zard, \emph{Multiplicit\'es modulaires et
  repr\'esentations de {${\rm GL}_2({\bf Z}_p)$} et de {${\rm
  Gal}(\overline{\bf Q}_p/{\bf Q}_p)$} en {$l=p$}}, Duke Math. J. \textbf{115}
  (2002), no.~2, 205--310, With an appendix by Guy Henniart. \MR{1944572
  (2004i:11052)}

\bibitem[BT72]{bruhat1972buildings}
F.~Bruhat and J.~Tits, \emph{Groupes r\'eductifs sur un corps local}, Inst.
  Hautes \'Etudes Sci. Publ. Math. (1972), no.~41, 5--251. \MR{0327923}

\bibitem[BT84]{bruhat1984buildings}
\bysame, \emph{Groupes r\'eductifs sur un corps local. {II}. {S}ch\'emas en
  groupes. {E}xistence d'une donn\'ee radicielle valu\'ee}, Inst. Hautes
  \'Etudes Sci. Publ. Math. (1984), no.~60, 197--376. \MR{756316}

\bibitem[DR09]{debacker2009depthzero}
Stephen DeBacker and Mark Reeder, \emph{Depth-zero supercuspidal {$L$}-packets
  and their stability}, Ann. of Math. (2) \textbf{169} (2009), no.~3, 795--901.
  \MR{2480618}

\bibitem[Fin15]{fintzen2015stable}
Jessica Fintzen, \emph{On the {M}oy--{P}rasad filtration}, arXiv 1511.00726
  (2015).

\bibitem[Lat15]{latham2015sln}
Peter Latham, \emph{On the unicity of types in special linear groups}, arXiv
  1511.00642 (2015).

\bibitem[Lat16]{latham2015sl2}
\bysame, \emph{Unicity of types for supercuspidal representations of $p$-adic
  $\mathbf{SL}_2$}, J. Number Theory \textbf{162} (2016), 376--390.

\bibitem[Mac80]{macdonald1980zetafunctions}
I.~G. Macdonald, \emph{Zeta functions attached to finite general linear
  groups}, Math. Ann. \textbf{249} (1980), no.~1, 1--15. \MR{575444}

\bibitem[Mor93]{morris1993intertwining}
Lawrence Morris, \emph{Tamely ramified intertwining algebras}, Invent. Math.
  \textbf{114} (1993), no.~1, 1--54. \MR{1235019}

\bibitem[Mor99]{morris1999levelzero}
\bysame, \emph{Level zero {$\bf G$}-types}, Compositio Math. \textbf{118}
  (1999), no.~2, 135--157. \MR{1713308 (2000g:22029)}

\bibitem[MP94]{moy1994unrefined}
Allen Moy and Gopal Prasad, \emph{Unrefined minimal {$K$}-types for {$p$}-adic
  groups}, Invent. Math. \textbf{116} (1994), no.~1-3, 393--408. \MR{1253198
  (95f:22023)}

\bibitem[MP96]{moy1996unrefined}
\bysame, \emph{Jacquet functors and unrefined minimal {$K$}-types}, Comment.
  Math. Helv. \textbf{71} (1996), no.~1, 98--121. \MR{1371680 (97c:22021)}

\bibitem[Pas05]{paskunas2005unicity}
Vytautas Paskunas, \emph{Unicity of types for supercuspidal representations of
  {${\rm GL}_N$}}, Proc. London Math. Soc. (3) \textbf{91} (2005), no.~3,
  623--654. \MR{2180458 (2007b:22018)}

\bibitem[SS97]{schneider1997sheaves}
Peter Schneider and Ulrich Stuhler, \emph{Representation theory and sheaves on
  the {B}ruhat-{T}its building}, Inst. Hautes \'Etudes Sci. Publ. Math. (1997),
  no.~85, 97--191. \MR{1471867}

\end{thebibliography}

\end{document}